\newenvironment{mpmatrix}{\begin{medsize}\begin{pmatrix}}%
    {\end{pmatrix}\end{medsize}}%
\DeclareMathOperator{\Span}{span}
\DeclareMathOperator{\Rank}{rank}
\DeclareMathOperator{\cyc}{cyc}
\DeclareMathOperator{\Card}{card}
\DeclareMathOperator{\sign}{sign}
\newcommand{\Sym}{\mathbb{S}}
\newcommand{\RR}{\mathbb R}
\newcommand{\NN}{\mathbb N}
\newcommand{\CC}{\mathbb C}
\newcommand{\SSS}{\mathbb S}
\newcommand{\dd}{{\rm d}}
\newcommand{\cM}{\mathcal M}
\newcommand{\Tr}{\mathrm{Tr}}
\newcommand{\bX}{\mathbb{X}}
\newcommand{\bY}{\mathbb{Y}}
\newcommand{\benu}{\begin{enumerate}}
\newcommand{\eenu}{\end{enumerate}}
\newcommand{\bop}{\begin{opomba}}
\newcommand{\eop}{\end{opomba}}
\newcommand{\tr}{\mathrm{tr}}
\newcommand{\supp}{\mathrm{supp}}
\newtheorem{theorem}{Theorem}[section]
\newtheorem{corollary}[theorem]{Corollary}
\newtheorem{lemma}[theorem]{Lemma}
\newtheorem{proposition}[theorem]{Proposition}
\newtheorem{conjecture}{Conjecture}
\theoremstyle{definition}
\newtheorem{example}[theorem]{Example}
\newcommand{\mc}{\mathcal}
\newcommand{\mbb}{\mathbb}
\newcommand{\mbf}{\mathbf}
\newcommand{\mds}{\mathds}
\newcommand{\ab}[1]{{\color{black} #1}}
\newcommand{\al}[1]{{\color{black} #1}}
\theoremstyle{remark}
\newtheorem{remark}[theorem]{Remark}
\numberwithin{equation}{section}
\begin{document}

\title{The tracial moment problem on quadratic varieties}

\author{Abhishek Bhardwaj${}^1$}
\address{Mathematical Sciences Institute,
The Australian National University,
Union Lane,
Canberra  ACT  2601}
\email{Abhishek.Bhardwaj@anu.edu.au}
\thanks{${}^1$Supported by the Australian Research Council ACEMS IMP Grant (Project ID: CE140100049).}

\author{Alja\v z Zalar${}^2$}
\address{Faculty of Computer and Information Science, University of Ljubljana, Večna pot 113, 1000
Ljubljana, Slovenia}
\email{aljaz.zalar@fri.uni-lj.si}
\thanks{${}^2$Supported by the Slovenian Research Agency grants P1-0288 and J1-8132.}

\subjclass[2010]{Primary 47A57, 15A45, 13J30; Secondary 11E25, 44A60, 15-04.}

\date{\today}


\keywords{Truncated moment problem, noncommutative polynomial, moment matrix, affine linear transformations, flat extensions.}

\begin{abstract} 
The truncated moment problem asks to characterize finite sequences of real numbers that are the moments of a positive Borel measure on $\RR^n$. 
Its tracial analog is obtained by integrating traces of symmetric matrices and is the main topic of this article.
The solution of the bivariate quartic tracial moment problem with a nonsingular $7\times 7$ moment matrix $\mc M_2$ whose columns are indexed by words of degree 2
was established by Burgdorf and Klep, while in our previos work we completely solved all cases with $\mc M_2$ of rank at most 5, split
$\mc M_2$ of rank 6 into four possible cases according to the column relation satisfied and solved two of them. Our first main result in this article is the solution for $\mc M_2$ satisfying the third possible column relation, i.e., $\bY^2=\mds 1+\bX^2$. Namely, the existence of a representing measure
is equivalent to the feasibility problem of certain linear matrix inequalities.
The second main result is a thorough analysis of the atoms in the measure for $\mc M_2$ satisfying $\bY^2=\mds 1$, 
the most demanding column relation. We prove that size 3 atoms are not needed in the representing measure, a fact proved to be true in all other
cases. The third main result extends the solution for $\mc M_2$ of rank 5 to general $\mc M_n$, $n\geq 2$, with two quadratic column relations. The main technique is the reduction of the problem to the classical univariate truncated moment problem, an approach which applies also in the classical truncated moment problem. Finally, our last main result, which demonstrates this approach, is a simplification of the proof for the solution of the degenerate truncated hyperbolic moment problem first obtained by Curto and Fialkow.

\end{abstract}

\maketitle

\section{Introduction}

The moment problem (MP) is a classical question in analysis which asks when a linear functional can be represented as integration; equivalently, given a sequence of numbers $\beta$, does there exist a positive measure $\mu$ such that $\beta$ represents the moments of $\mu$? This problem is well studied in one dimension (on $\mbb{R}$; see \cite{Akh65,KN77} for instance), while a general solution on $\RR^n$, Haviland's theorem \cite{Hav35}, provides a duality
with positive polynomials and relates the MP to real algebraic geometry (RAG). Renewed interest into the MP in RAG came with Schm{\"u}dgen's solution \cite{Sch91} to the MP over compact semi-algebraic sets; for further results we refer the reader to \cite{Put93,PV99, DP01, PS01, PS06, PS08, Mar08,Lau09}. 
This duality of the MP with positive polynomials has been efficiently used by several authors for 
approximating global optimization problems, most notably Lasserre \cite{Las01, Las09} and Parrilo \cite{Par03},
while recently it has also been useful in understanding solutions of differential equations \cite{MLH11}. There are also many noncommutative generalizations of the MP; the MP for matrix and operator polynomials are considered in \cite{AV03,Vas03,BW11,CZ12,KW13}, the quantum MP in \cite{DLTW08}, free versions of the MP \cite{McC01,Hel02,HM04,HKM12} are the domain of free RAG, while in this paper we are interested in the tracial MP \cite{BK12, BK10}.
	
The multi-dimensional truncated moment problem (TMP), which is more general than the full MP \cite{Sto01}, has been intensively studied in the seminal works of Curto and Fialkow \cite{CF91, CF96, CF98-1, CF98-2,CF08}, with the functional calculus they developed for MP becoming an essential tool for studying moment problems. The bivariate quartic MP is completely solved \cite{CF02, CF04, CF05, CF08,FN10,CS16}, while the sextic has been closely investigated \cite{CFM08, Yoo11,CS15,Fia17}. Recently, the introduction of the core variety provided new results toward the solution of the sextic MP \cite{Fia17,BF+,Sch17,DS18}.
Using convex geometry techniques new sufficient condition for the solvability of the TMP are established also in \cite{Ble15}. 

The truncated tracial moment problem (TTMP), which is the topic of this paper, is the study of linear functionals on the space of non-commutative polynomials that can be represented as traces of 
evaluations on convex combinations of tuples of real symmetric matrices.  It was introduced by Burgdorf and Klep in \cite{BK12, BK10}, where the authors demonstrated its duality with trace-positive polynomials. This duality connects the TTMP to many interesting and important problems such as Connes' embedding conjecture in operator algebras \cite{Con76,KS08-1}, or the now proved BMV conjecture \cite{BMV75,KS08-2, Sta13, Bur11}. Furthermore, \cite{BK12} established tracial analogues of the results of Curto and Fialkow, relating the solution of the TTMP to flat extension of the associated moment matrix 
(see Subsection \ref{BQTMP} for terminology and definitions). For bivariate quartic tracial sequences, an affirmative answer to the TTMP was given in \cite{BK10} when the tracial moment matrix is nonsingular.

Just like the classic TMP, the TTMP is deeply intertwined with optimization of noncommutative polynomials. In \cite{BCKP13} it is shown how minimizing the trace of a noncommutative polynomial evaluated on matrices of some size gives rise to the TTMP. In fact, \cite{BCKP13, BKP16} illustrates how the solution of the TTMP can be used to extract optimizers in this setting.

Inspired by the work of Burgdorf and Klep and Curto and Fialkow, we studied the bivariate quartic TTMP having a singular $(7\times7)$ tracial moment matrix $\mc{M}_{2}$ in \cite{BZ18}. Following the approach of Curto and Fialkow, we analyzed the moment matrix based on its rank, giving a complete classification when the rank is 
at most five.
When the rank is 
six, we reduced the problem to four canonical cases, gave a characterization of when a flat extension exists and in two cases
also proved the existence of a representing measure to be equivalent to the solvability of some linear matrix inequalities.
Moreover we gave explicit examples showing that, unlike in the commutative setting, the existence of a representing measure is mostly \emph{not} equivalent to the existence of a flat extension of the moment matrix. 
 
This article presents new results in the remaining cases of our analysis of the singular quartic bivariate TMP and expands many of the results from degree four to arbitrary degree. We next present the \emph{Bivariate} TTMP and some basic concepts and definitions. We then give an organization of the paper and a summary of our main results.  



\subsection{Bivariate truncated 
tracial moment problem} \label{BQTMP}
 
 In this subsection, we make our problem of study precise and introduce basic definitions used throughout this article. 

\subsubsection{Noncommutative bivariate polynomials}

	We denote by $\left\langle X,Y\right\rangle$ the \textbf{free monoid} generated by 
	the noncommuting letters $X,Y$ and call its elements \textbf{words} in $X, Y$. For a word $w\in \left\langle X,Y\right\rangle$, $w^{\ast}$ is its reverse, and $v\in\left\langle X,Y\right\rangle$ is \textbf{cyclically equivalent} to $w$, which we denote by $\displaystyle{v\overset{\cyc}{\sim} w}$, if and only if $v$ is a cyclic permutation of $w$. 
	
	Consider the free algebra $\mbb{R}\!\left\langle  X,Y\right\rangle$ of polynomials in $X,Y$
	with coefficients in $\mbb{R}$. Its elements are called \textbf{noncommutative (nc) polynomials}. Endow $\mbb{R}\!\left\langle X,Y\right\rangle$ with the involution $p \mapsto p^{*}$ fixing $\mbb{R} \cup \{ X,Y\}$ pointwise. The length of the longest word in a polynomial $f\in \mbb{R}\!\left\langle  X,Y \right\rangle$  is the \textbf{degree} 
of $f$ and is denoted by $\deg(f)$ or $|f|$.  We write $\mbb{R}\!\left\langle  X,Y\right\rangle_{\leq k}$ for
	all polynomials of degree  at most $k$. For a \textbf{nc} polynomial $f$, its \textbf{commutative collapse} $\check{f}$ is obtained by replacing the \textbf{nc} variables $X,Y$, with commutative variables $x,y$, and similarly for words $w\in\langle X,Y\rangle$.
	 
\subsubsection{Bivariate truncated
real tracial moment problem}

Given a sequence of real numbers $\beta\equiv \beta^{(2n)}=(\beta_w)_{|w|\leq 2n}$, indexed by words $w$ of length at most $2n$ such that
	\begin{equation}\label{cyclic-cond}
		\beta_{v}=\beta_{w}\quad\text{whenever } v\overset{\cyc}{\sim} w
		\quad \text{and}\quad \beta_w=\beta_{w^\ast}\quad\text{for all } |w|\leq 2n,
	\end{equation}
 the \textbf{bivariate truncated
real tracial moment problem (BTTMP)} for $\beta$ asks to find conditions for the existence of
 $N\in\mathbb{N}$, $t_i\in\mathbb{N}$, $\lambda_i\in \RR_{> 0}$ with 
$\sum_{i=1}^N \lambda_i=1$ and pairs of real symmetric matrices $(A_{i},B_{i})\in (\mathbb{SR}^{t_i\times t_i})^2$,
such that 
\begin{equation}\label{truncated tracial moment sequence special-biv}
	\beta_{w}=\sum_{i=1}^N \lambda_i \text{Tr}(w(A_{i},B_{i})),
\end{equation}
where $w$ runs over the indices of the sequence $\beta$ and $\text{Tr}$ denotes the \textbf{normalized trace}, i.e., 
	$$
	\text{Tr}(A)=\frac{1}{t}\tr(A)\quad \text{for every } A\in \RR^{t\times t}.
	$$	
If such data exist, we say that $\beta$ admits a representing measure. If $\beta_1=1$, then we say $\beta$ is \textbf{normalized}. 
	We may always assume that $\beta$ is normalized (otherwise we replace $\Tr$ with $\frac{1}{\beta_1}\Tr$). 
 The vectors $(A_i,B_i)$ are  \textbf{atoms of size $t_i$} and the numbers $\lambda_i$ are \textbf{densities}.
We say that $\mu$ is a representing measure of \textbf{type}
$(m_1,m_2,\ldots,m_r)$ if it consists of exactly $m_i\in \NN\cup\{0\}$ atoms of size $i$ and $m_r\neq 0$.
A representing measure of type $(m_1^{(1)},m_2^{(1)},\ldots,m_{r_1}^{(1)})$ is
\textbf{minimal}, if there does not exist another representing measure of type $(m_1^{(2)}$, $m_2^{(2)}$,$\ldots$, $m_{r_2}^{(2)})$ such that
	$$r_2<r_1\quad \text{or} \quad (r:=r_1=r_2\quad \text{and} \quad
	(m_{r}^{(2)},m_{r-1}^{(2)},\ldots,m_{1}^{(2)}) \prec _{\text{lex}} (m_{r}^{(1)},m_{r-1}^{(1)},\ldots,m_{1}^{(1)}),$$
where $\prec _{\text{lex}}$ denotes the usual lexicographic order on $(\NN\cup\{0\})^r$.
We say that $\beta$ admits a \textbf{noncommutative (nc) measure}, if it admits a minimal measure of type 
$(m_1,m_2,\ldots,m_r)$ with $r>1$.
If $\beta_{w}=\beta_{\check{w}}$ for all $w\in\langle X,Y\rangle$, we call $\beta$ a \textbf{commutative (cm) sequence} and the MP reduces to the classical one studied by Curto and Fialkow. Otherwise we call $\beta$ an \textbf{noncommutative (nc) sequence}.

\begin{remark}
\begin{enumerate}
	\item Note that replacing a vector $(A_i,B_i)$ with any 
		vector $$(U_i A_i U_i^t , U_i B_i U_i^{t})\in (\mathbb{SR}^{t_i\times t_i})^2$$			where $U_i\in \RR^{t_i\times t_i}$ is an orthogonal matrix, preserves	
		(\ref{truncated tracial moment sequence special-biv}).
	\item By the tracial version \cite[Theorem 3.8]{BCKP13} of Bayer-Teichmann theorem 
		\cite{BT06}, the problem (\ref{truncated tracial moment sequence special-biv}) is 
		equivalent to the more general problem of finding a probability measure $\mu$
		on $(\mathbb{SR}^{t\times t})^2$ such that 
		$\beta_{w}=\int_{(\mathbb{SR}^{t\times t})^2} \Tr(w(A,B))\; \dd\mu(A,B)$.
\end{enumerate}
\end{remark}

We associate to the sequence $\beta^{(2n)}$ the \textbf{truncated moment matrix  of order $n$}, defined by
	$$\mc{M}_n:=\mc{M}_n(\beta^{(2n)})=(\beta_{w_1^{\ast}w_2})_{|w_1|\leq n,|w_2|\leq n},$$
where the rows and columns are indexed by words in $\RR\!\langle X,Y\rangle_{\leq n}$ in graded lexicographic order with
$X$ being smaller than $Y$, 
e.g., for $n=2$ we have
	$$\mds 1\prec _{\text{lex}} \bX \prec _{\text{lex}} \bY \prec _{\text{lex}} \bX^2 
	\prec _{\text{lex}} \bX\bY \prec _{\text{lex}} \bY\bX \prec _{\text{lex}} \bY^2.$$
\noindent Observe that the matrix $\mc{M}_n$ is symmetric.
The following is a well-known necessary condition for the existence of a measure in the classical commutative moment problem and easily extends to the tracial case.

\begin{proposition}
	\label{Mn-psd}
	If $\beta^{(2n)}$ admits a measure, then $\mc{M}_n$ is positive semidefinite.
\end{proposition}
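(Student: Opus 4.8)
The plan is to verify the defining inequality of positive semidefiniteness directly on the quadratic form attached to $\mc M_n$. Fix an arbitrary polynomial $p=\sum_{|w|\leq n}p_w w\in\mbb R\langle X,Y\rangle_{\leq n}$ and let $\vec p=(p_w)_{|w|\leq n}$ be its coefficient vector in the graded lexicographic monomial basis. Since $\mc M_n=(\beta_{w_1^\ast w_2})_{|w_1|\leq n,\,|w_2|\leq n}$, the associated quadratic form is
\begin{equation*}
\vec p^{\,T}\mc M_n\vec p=\sum_{|w_1|\leq n}\sum_{|w_2|\leq n}p_{w_1}p_{w_2}\,\beta_{w_1^\ast w_2},
\end{equation*}
so it suffices to show this expression is nonnegative for every choice of $\vec p$.

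First I would insert the representation (\ref{truncated tracial moment sequence special-biv}) of $\beta$ guaranteed by the hypothesis, writing $\beta_{w_1^\ast w_2}=\sum_{i=1}^N\lambda_i\Tr\big((w_1^\ast w_2)(A_i,B_i)\big)$, and interchange the finite sums to obtain
\begin{equation*}
\vec p^{\,T}\mc M_n\vec p=\sum_{i=1}^N\lambda_i\sum_{w_1,w_2}p_{w_1}p_{w_2}\,\Tr\big((w_1^\ast w_2)(A_i,B_i)\big).
\end{equation*}
The key algebraic observation is that, because the involution on $\mbb R\langle X,Y\rangle$ reverses words and fixes $X,Y$, and because each $A_i,B_i$ is symmetric, evaluation intertwines the involution with the transpose: $w^\ast(A_i,B_i)=\big(w(A_i,B_i)\big)^{T}$. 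Hence $(w_1^\ast w_2)(A_i,B_i)=\big(w_1(A_i,B_i)\big)^{T}w_2(A_i,B_i)$.

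Introducing the matrix $P_i:=p(A_i,B_i)=\sum_{|w|\leq n}p_w\,w(A_i,B_i)$, bilinearity of the trace collapses the inner double sum into a single term:
\begin{equation*}
\sum_{w_1,w_2}p_{w_1}p_{w_2}\,\Tr\big(w_1(A_i,B_i)^{T}w_2(A_i,B_i)\big)=\Tr\big(P_i^{T}P_i\big)=\tfrac{1}{t_i}\tr\big(P_i^{T}P_i\big)\geq 0,
\end{equation*}
since $\tr(P_i^{T}P_i)$ is the squared Frobenius norm of $P_i$. As each $\lambda_i>0$, we conclude $\vec p^{\,T}\mc M_n\vec p=\sum_i\lambda_i\,\Tr(P_i^{T}P_i)\geq 0$, which is exactly positive semidefiniteness of the symmetric matrix $\mc M_n$. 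The only genuinely nontrivial step is the intertwining identity $w^\ast(A_i,B_i)=(w(A_i,B_i))^{T}$, which encodes the compatibility of the trace condition (\ref{cyclic-cond}) with evaluation on symmetric matrices; once it is in hand, recognizing the form as a $\lambda_i$-weighted sum of Frobenius norms is routine.
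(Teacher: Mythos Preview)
Your proof is correct and is precisely the standard argument one would expect here: express the quadratic form via the representing measure, use that evaluation at symmetric matrices intertwines the word-involution with the matrix transpose, and recognize the result as a nonnegative combination of Frobenius norms. The paper itself does not supply a proof of this proposition---it merely remarks that the statement is well known in the commutative setting and ``easily extends to the tracial case''---so your argument in fact fills in exactly the details the authors omit.
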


Let $(X,Y)\in (\mbb{SR}^{t\times t})^{2}$ where $t\in \NN$. We denote by
$\mc{M}^{(X,Y)}_n$ the moment matrix generated by $(X,Y)$, i.e.,
$\beta_{w(X,Y)}=\Tr(w(X,Y))$ for every $|w(X,Y)|\leq 2n$.

\subsection{Results and Readers Guide}
We present the four major contributions in this article.

\subsubsection{TTMP to LMI} Firstly, in \cite[Corollaries 7.6 and 7.9]{BZ18} we proved that the existence of a nc measure for $\mc{M}_2$ of rank 6 satisfying one of the relations $\bY^2=\mds 1-\bX^2$ or $\bX\bY+\bY\bX=\mbf 0$ is equivalent to the feasibility problem of three linear matrix inequalities and a rank-to-cardinality condition (a necessity arising from a cm moment problem).  
A core component of the proof was to show that when 
$\beta_X=\beta_Y=\beta_{X^3}=\beta_{X^2Y}=\beta_{Y^3}=0$  
we have the following result (see \cite[Theorems 7.5 (1), 7.8 (1)]{BZ18}):\\

\begin{addmargin}[2em]{2em}
	\textit{
		For the smallest $\alpha>0$ such that $\Rank\left( \mc{M}_{2} - \alpha W \right) < \Rank \left(\mc{M}_{2}\right)$, the matrix $\mc{M}_{2}-\alpha W$ admits a measure, where $W = \left(\mc M_2^{(1,0)}+\mc M_2^{(-1,0)}\right)$ for $\bY^{2} = \mds{1}-\bX^{2}$ (resp. $W =\mc M_2^{(0,0)}$ for $\bX\bY+\bY\bX=\mbf 0$)}.\\
\end{addmargin}

\noindent Applying the same method of subtracting $\alpha\left(\mc M_2^{(0,1)}+\mc M_2^{(0,-1)}\right)$ in the case of the relation $\bY^2=\mds 1+\bX^2$ does not always work. 

	Nevertheless, in Section \ref{S1} we show that there does in fact exist a matrix $W$ such that the result above \emph{always} holds also for the relation $\bY^2=\mds 1+\bX^2.$
The matrix $W$ is constructed as a sum of moment matrices generated by carefully chosen commutative atoms (see \eqref{subtract-part}). Consequently, we are able to reformulate the existence of a nc measure for  a rank 6 $\mc{M}_{2}$ satisfying the relation $\bY^{2}=\mds{1}+\bX^{2}$, into feasibility problems of LMI's and a rank-to-cardinality condition.

\subsubsection{Size of Atoms} Secondly, in \cite[Proposition 4.1 (2)]{BZ18} we proved that the moment sequence $\beta^{(4)}$ with a moment matrix $\mc M_2$ of rank 6 can always be transformed by using appropriate affine linear transformation 
to a moment sequence 
$\widetilde{\beta}^{(4)}$,  
with $\widetilde{\mc M}_2$ satisfying one of the four canonical relations 
\begin{equation}\label{poss-rel}
\bY^2=\mds 1 - \bX^2,
\quad\text{or}\quad\bX\bY+\bY\bX=\mbf 0,\quad\text{or}\quad\bY^2=\mds 1+\bX^2,
\quad\text{or}\quad\bY^2=\mds 1.
\end{equation} 
In the first three cases we showed that we may assume that the nc atoms 
$(X_i,Y_i)\in \big(\SSS\RR^{t_i'\times t_i'}\big)^2$, $t_{i}'>1$, have an elegant form, i.e., 
\begin{equation}\label{atoms-nice}
X_i=\left(\begin{array}{cc} \gamma_i I_{t_i}& B_i \\ B_i^t & -\gamma_i I_{t_i}\end{array}\right),\quad 
Y_i=\left(\begin{array}{cc} \mu_i I_{t_i}  & 0 \\ 0 & -\mu_i I_{t_i}\end{array}\right),
\end{equation}
where $\gamma_{i}\geq 0$, $\mu_{i}>0$, $B_i$ is a matrix of size $t_i$ (see \cite[Proposition 5.1]{BZ18}). 
Since $Y_i^2=\mu_i^2 I_{t_i'}$, $\mc M_2^{(X_i,Y_i)}$ is of rank at most 5 and hence admits a measure if type
 $(m,1)$, $m\in \{1,2,3\}$, by \cite[\S6]{BZ18}.
In the fourth relation of \eqref{poss-rel} the nc atoms need not be of the form \eqref{atoms-nice}, making this case particularly difficult. In Section \ref{S2} we thoroughly analyze the possible atoms in representing measure, and prove that atoms of size 3 are not needed.


\subsubsection{Extensions to order $n$} Thirdly, in Section \ref{S3} we extend our results from $\mc{M}_{2}$ of rank 5 to $\mc{M}_{n}$ with $n\in\NN$. The main idea is as follows. By first applying an affine linear transformation to $\mc{M}_n$ we may assume that it satisfies the relation 
\begin{equation} \label{first-rel-intro}
  \bX\bY+\bY\bX=\mbf 0,
\end{equation}
and one of the relations
\begin{equation}\label{second-rel-intro}
\bY^2=\mds 1 - \bX^2,
\quad\text{or}\quad\bY^2=\mds 1,
\quad\text{or}\quad\bY^2=\mds 1+\bX^2,
\quad\text{or}\quad\bY^2=\bX^2.
\end{equation} 
Due to \eqref{first-rel-intro}, all the moments $\beta_{X^iY^j}$ with one of the exponents $i,j$ odd and the other nonzero, are equal to zero (see Lemma \ref{zero-moment-general2}).
Additionally, the nc atoms (see Lemma \ref{zero-moment-general}) do not contribute anything to the moments $\beta_X$ and $\beta_Y$, those two must be represented by size 1 atoms in the measure.
There are at most 4 size 1 atoms satisfying \eqref{first-rel-intro} and \eqref{second-rel-intro}, thus there is (under the L{\"o}wner partial ordering) a smallest 
cm matrix $M$ satisfying $\beta_X^M=\beta_X$, $\beta_Y^M=\beta_Y$. 
Subtracting this matrix from $\mc M_n$ we end up with two classical univariate truncated 
	moment problems, one on rows/columns $\{\mds 1, \bX, \bX^2,\ldots,\bX^n\}$ and the other on $\{\mds 1, \bY, \bX\bY, \bX^2\bY,\ldots,\bX^{n-1}\bY\}.$
It turns out that solving the first one also solves the second one due to their connection comming from \eqref{second-rel-intro}.

\subsubsection{Reduction of the TMP on degenerate hyperbolas} Finally, in Section \ref{S4}
we give a simplied proof for the solution of the TMP on degenerate hyperbolas which was discovered by Curto and Fialkow \cite[Theorem 3.1]{CF05}. 
The idea for the proof, inspired by the extension results from Section \ref{S3}, is to reduce the bivariate TMP down to the univariate one.

\begin{remark}
The reduction of the bivariate TMP to the univariate one can also by used in some other cases of the quartic TMP and is also very 
 efficient beyond quadratic column relations. 
We will present this approach in our future work \cite{BZ+} where we study the TMP with column relations of higher degrees.
\end{remark}

\vspace{0.3cm}
\noindent \textbf{Acknowledgement.} The authors would like to thank Igor Klep for insightful discussions and comments on the preliminary versions of this article.

\section{Preliminaries}

In this section we present elementary results for the tracial moment problem and establish some additional notation. Many of these are direct analogues of the corresponding results in the commutative setting.

\subsection{Support of a measure and RG relations} \label{supp-and-RG}
Let $A$ be a matrix with its rows and columns indexed by words in $\mbb{R}\langle X,Y\rangle_{\leq n}$.
For a word $w$ in $\RR\!\langle X,Y\rangle_{\leq n}$ we denote by $w(\mathbb{X,Y})$ the column of $A$ indexed by $w$.
We write $[A]_{\{R,C\}}$  for the compression of $A$ to the rows and columns indexed by elements of $R$ and $C$ resp., with $R,C \subset \RR\langle X,Y\rangle_{\leq n}$ subsets of words. When we have $R=C$, we simply write $[A]_{R}$.
$\mbf{0_{k_1\times k_2}}$ stands for the $k_1\times k_2$ matrix with zero entries.
Usually we will omit the subindex $k_1\times k_2$, where the size will be clear from the context.

Let $\mc{C}_{\mc{M}_n}$ denote the span of the column space of $\mc{M}_n$, i.e., 
	$$
	\mc{C}_{\mc{M}_{n}} = \Span \left\{ w(\bX, \bY) : w\in\mbb{R}\!\left\langle X,Y \right\rangle_{\leq n} \right\}= \Span\left\{ \mds{1},\bX,\bY,\bX^{2},\bX\bY,\bY\bX,\bY^{2},\dotsc,\bX^{n},\dotsc,\bY^{n} \right\}.
	$$
For a polynomial $p\in \mbb{R}\!\langle X,Y\rangle _{\leq n}$ of the form $p=\sum_w a_{w}w(X,Y)$,
we define 
	$$p(\mathbb{X},\mathbb{Y})=\sum_w a_{w} w(\bX,\bY)$$ and notice that 
	$p(\bX,\bY)\in \mc{C}_{\mc{M}_n}$. We express linear dependencies among the columns of $\mc{M}_n$ as 
	$$p_1(\bX,\bY)=\mbf 0,\ldots, p_m(\bX,\bY)=\mbf 0,$$ for some $p_1,\ldots,p_m\in\mbb{R}\!\langle X,Y\rangle _{\leq n}$, $m\in \NN\cup\{0\}$. We define the \textbf{free zero set} $\mc{Z}(p)$ of $p\in\mbb{R}\!\langle X,Y\rangle$ by
	$$\mc{Z}(p):=\left\{ (A,B)\in(\mbb{SR}^{t\times t})^{2} : t\in \NN,\; p(A,B)=\mbf 0_{t\times t}\right\}.$$

\begin{theorem}\cite[Theorem 2.2]{BZ18}\label{support lemma} 
	Suppose $\beta^{(2n)}$ admits a representing measure consisting of finitely many atoms 
	$(X_i,Y_i)\in (\mathbb{SR}^{t_i\times t_i})^2$, $t_i\in \NN$, 
	with the corresponding densities $\lambda_i\in (0,1)$, $i=1,\ldots, r$, $r\in \NN$.
	Let $p\in\mbb{R}\!\left\langle X,Y \right\rangle_{\leq n}$ be a polynomial.	
	Then the following are true:
	\begin{enumerate}
		\item\label{point-1-support} 
			We have 
				$$\bigcup_{i=1}^r\; (X_i,Y_i)\subseteq \mc{Z}(p) \quad \Leftrightarrow \quad 
					p(\bX,\bY)=\mbf 0\; \text{ in }\mc{M}_n.$$
		\item\label{point-2-support}  Suppose the sequence $\beta^{(2n+2)}=(\beta_w)_{|w|\leq n+1}$ is the extension of $\beta$ generated by 
				$$\beta_w=\sum_{i=1}^r \lambda_i \Tr(w(X_i,Y_i)).$$ 
			Let $\mc{M}_{n+1}$ be the corresponding moment matrix.
			Then:
			$$p(\bX,\bY)=\mbf 0\;\text{ in }\mc{M}_n\quad \Rightarrow \quad
				p(\bX,\bY)=\mbf 0 \;\text{ in }\mc{M}_{n+1}.$$
		\item\label{point-3-support}  (Recursive generation) For $q \in\mbb{R}\!\left\langle X,Y \right\rangle_{\leq n}$ such that $pq \in\mbb{R}\!\left\langle X,Y \right\rangle_{\leq n}$, we have
		 		\begin{equation*} \label{pq-lemma}
					p(\bX,\bY)=\mbf{0}\;\text{ in }\mc{M}_n\quad\Rightarrow\quad
					(pq)(\bX,\bY)=(qp)(\bX,\bY)=\mbf{0}\;\text{ in }\mc{M}_n.
				\end{equation*}
	\end{enumerate}
\end{theorem}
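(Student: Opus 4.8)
The plan is to derive everything from a single computation in part (1), after which parts (2) and (3) follow by applying part (1) twice. First I would fix notation: write $L$ for the tracial Riesz functional of the representing measure, $L(f)=\sum_{i=1}^r \lambda_i \Tr(f(X_i,Y_i))$ for $f\in\mathbb{R}\langle X,Y\rangle_{\leq 2n}$, so that the $(v,w)$-entry of $\mc M_n$ is $\beta_{v^*w}=L(v^*w)$. Writing $p=\sum_w a_w w$, the $v$-entry of the column $p(\bX,\bY)=\sum_w a_w\,w(\bX,\bY)$ is $\sum_w a_w L(v^*w)=L(v^*p)$, so the relation $p(\bX,\bY)=\mbf 0$ in $\mc M_n$ is exactly the statement that $L(v^*p)=0$ for every word $v$ with $|v|\leq n$. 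Since each $X_i,Y_i$ is symmetric, evaluating a reversed word produces a transpose, $v^*(X_i,Y_i)=v(X_i,Y_i)^t$, and hence $(v^*p)(X_i,Y_i)=v(X_i,Y_i)^t\,p(X_i,Y_i)$; I would record this identity up front, as it is used in both directions.

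For part (1), the forward implication is quick: if $p(X_i,Y_i)=\mbf 0$ for all $i$, then for each $|v|\leq n$ we get $L(v^*p)=\sum_i \lambda_i \Tr\big(v(X_i,Y_i)^t\,p(X_i,Y_i)\big)=0$, giving $p(\bX,\bY)=\mbf 0$. The converse is the crux. Assuming $p(\bX,\bY)=\mbf 0$, every column entry $L(w^*p)$ vanishes, and since $p^*p=\sum_w a_w\,(w^*p)$ this yields $L(p^*p)=\sum_w a_w L(w^*p)=0$. On the other hand, evaluating on the atoms,
\[
L(p^*p)=\sum_{i=1}^r \lambda_i \Tr\!\big(p(X_i,Y_i)^t p(X_i,Y_i)\big)=\sum_{i=1}^r \frac{\lambda_i}{t_i}\,\big\|p(X_i,Y_i)\big\|_F^2,
\]
a sum of nonnegative terms with strictly positive weights $\lambda_i/t_i$. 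Hence every term is zero, forcing $p(X_i,Y_i)=\mbf 0$ for all $i$, i.e. $\bigcup_i (X_i,Y_i)\subseteq \mc Z(p)$. The one genuine idea — the only real obstacle — is to test the vanishing column against $p^*$ itself so as to manufacture the manifestly nonnegative quantity $L(p^*p)$; positivity of the normalized trace of $A^tA$ then forces each atom into $\mc Z(p)$.

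Parts (2) and (3) are then immediate corollaries of the equivalence in part (1). For part (2), since $\deg p\leq n$, part (1) applied in $\mc M_n$ gives $p(X_i,Y_i)=\mbf 0$ for every atom; because $\beta^{(2n+2)}$ is generated by the same atoms and densities and $\deg p\leq n+1$, the forward implication of part (1) at level $n+1$ returns $p(\bX,\bY)=\mbf 0$ in $\mc M_{n+1}$. For part (3), part (1) again yields $p(X_i,Y_i)=\mbf 0$ for all $i$, whence $(pq)(X_i,Y_i)=p(X_i,Y_i)q(X_i,Y_i)=\mbf 0$ and likewise $(qp)(X_i,Y_i)=q(X_i,Y_i)p(X_i,Y_i)=\mbf 0$; as $\deg(pq),\deg(qp)\leq n$, the forward implication of part (1) delivers $(pq)(\bX,\bY)=(qp)(\bX,\bY)=\mbf 0$ in $\mc M_n$. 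Thus the whole statement rests on the positivity argument in the converse of part (1).
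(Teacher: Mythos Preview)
Your proof is correct. The present paper does not itself prove this statement --- it is quoted as \cite[Theorem~2.2]{BZ18} without argument --- but the approach you give is precisely the standard one and is what the original reference does: express the column relation as the vanishing of $L(v^*p)$ for all $|v|\le n$, specialize to $v$ ranging over the words of $p$ to obtain $L(p^*p)=0$, and use positivity of the Frobenius norm together with the strict positivity of the densities $\lambda_i$ to force $p(X_i,Y_i)=0$ for each atom; parts~(2) and~(3) then follow immediately from~(1) as you indicate.
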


Column relations rising in $\mc{M}_n$ through an application of 
	Theorem \ref{support lemma}  (\ref{point-3-support})  are called \textbf{RG relations}.
	If $\mc M_n$ satisfies RG relations, we say $\mc{M}_n$ is \textbf{recursively generated}.
The first consequence of the RG relations is the following important observation about a nc
moment matrix $\mc{M}_n$.

\begin{corollary}\cite[Colloralies 2.3, 2.4]{BZ18} \label{lin-ind-of-4-col}
	Suppose $n\geq 2$ and $\beta^{(2n)}$ be a sequence such that 
	$\beta_{X^2Y^2}\neq \beta_{XYXY}$. 
	Then the columns $\mds 1, \bX, \bY, \bX\bY$ of $\mc{M}_n$ are linearly independent.
	Hence, if  $\mc{M}_n$ is of rank at most 3 with $\beta_{X^2Y^2}\neq \beta_{XYXY}$, then $\beta$ does not admit a representing measure.
\end{corollary}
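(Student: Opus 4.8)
The plan is to show that any linear dependence
\[
a_0\mds 1+a_1\bX+a_2\bY+a_3\bX\bY=\mbf 0
\]
among the four columns of $\mc{M}_n$ must be trivial, and to locate the single place where the hypothesis $\beta_{X^2Y^2}\neq\beta_{XYXY}$ is used, namely in forcing $a_3=0$. Such a column relation means that
\[
a_0\beta_{w^\ast}+a_1\beta_{w^\ast X}+a_2\beta_{w^\ast Y}+a_3\beta_{w^\ast XY}=0
\]
for every word $w$ with $|w|\le n$. The key idea is to read this identity off the two ``mirror'' rows $w=XY$ and $w=YX$, which are available since $n\ge 2$ and whose indexing words are reverses of one another.

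First I would subtract the equations coming from these two rows. Using $w^\ast=YX$ for $w=XY$ and $w^\ast=XY$ for $w=YX$, together with the cyclic equivalences $YX\overset{\cyc}{\sim}XY$, $\;YX^2\overset{\cyc}{\sim}X^2Y\overset{\cyc}{\sim}XYX$ and $YXY\overset{\cyc}{\sim}XY^2$, the two equations agree in their $\mds 1,\bX,\bY$ contributions and differ only in the last term: indeed $YX\cdot XY=YX^2Y\overset{\cyc}{\sim}X^2Y^2$, whereas $XY\cdot XY=XYXY$. Subtracting therefore isolates
\[
a_3\big(\beta_{X^2Y^2}-\beta_{XYXY}\big)=0,
\]
and since $\beta_{X^2Y^2}\neq\beta_{XYXY}$ we get $a_3=0$. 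Equivalently, $\bX\bY\notin\Span\{\mds 1,\bX,\bY\}$; this is the heart of the statement and is completely elementary.

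It remains to see that $\mds 1,\bX,\bY$ are themselves linearly independent, for then the relation collapses to $a_0\mds 1+a_1\bX+a_2\bY=\mbf 0$ and we are finished. Here I would bring in the representing measure: if $\beta$ admits one with atoms $(X_i,Y_i)$ and densities $\lambda_i$, then by Theorem \ref{support lemma}(\ref{point-1-support}) such a relation forces $a_0 I+a_1X_i+a_2Y_i=\mbf 0$ for every $i$, with $(a_1,a_2)\neq(0,0)$ (otherwise $a_0=0$ as well). Thus each atom satisfies a nontrivial affine relation between $X_i$ and $Y_i$, so $X_i$ and $Y_i$ commute, giving $X_iY_iX_iY_i=X_i^2Y_i^2$ and hence
\[
\beta_{XYXY}=\sum_i\lambda_i\Tr(X_iY_iX_iY_i)=\sum_i\lambda_i\Tr(X_i^2Y_i^2)=\beta_{X^2Y^2},
\]
contradicting the hypothesis. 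Therefore no nontrivial relation exists and the four columns are linearly independent.

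Linear independence immediately yields $\Rank\mc{M}_n\ge 4$, so the final assertion follows: if $\Rank\mc{M}_n\le 3$ and $\beta_{X^2Y^2}\neq\beta_{XYXY}$, the existence of a measure would (by the argument above) produce four independent columns, forcing $\Rank\mc{M}_n\ge 4$, a contradiction; hence $\beta$ admits no representing measure. I expect the only delicate point to be the bookkeeping in the first step: one must verify that every cyclic class appearing in the $\mds 1,\bX,\bY$ columns is genuinely shared by the two rows $XY$ and $YX$, so that the subtraction cancels everything except $\beta_{X^2Y^2}-\beta_{XYXY}$. The commuting‑atoms argument used to rule out a dependence among $\mds 1,\bX,\bY$ is then routine.
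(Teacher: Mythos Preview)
Your argument is correct. Note, however, that this corollary is not proved in the present paper; it is quoted from \cite[Corollaries 2.3, 2.4]{BZ18}, so there is no in-paper proof to compare against. Your approach is the natural one and is exactly in the spirit of that reference: the row-subtraction at $w=XY$ versus $w=YX$ is a clean, purely algebraic way to force $a_3=0$ (equivalently, $\bX\bY\notin\Span\{\mds 1,\bX,\bY\}$), and the use of Theorem~\ref{support lemma}\eqref{point-1-support} to rule out a dependence among $\mds 1,\bX,\bY$ via commuting atoms is precisely how the support lemma is meant to be applied.

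One point worth flagging about the \emph{statement} rather than your proof: the first sentence of the corollary, as literally written, asserts linear independence of the four columns from $\beta_{X^2Y^2}\neq\beta_{XYXY}$ alone. That is not quite true without the representing-measure hypothesis (one can cook up non-psd sequences with $\bX=\mbf 0$ while keeping $\beta_{X^2Y^2}\neq\beta_{XYXY}$). You correctly bring in the measure exactly where it is needed, so that what you actually establish is: \emph{if $\beta$ admits a measure and $\beta_{X^2Y^2}\neq\beta_{XYXY}$, then $\mds 1,\bX,\bY,\bX\bY$ are independent}. This is the version that yields the contrapositive in the second sentence, and it is also the version used elsewhere in the paper.
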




\subsection{Flat extensions} \label{flat-ext-prel}

For a matrix $A\in \Sym\RR^{s\times s}$, an \textbf{extension} 
$\widetilde{A}\in \Sym\RR^{(s+u)\times (s+u)}$ of the form
	$$\widetilde A=\begin{pmatrix} A & B \\ B^t & C \end{pmatrix}$$
for some $B\in \RR^{s\times u}$ and $C\in \RR^{u\times u}$, is called $\textbf{flat}$ if 
$\Rank(A)=\Rank(\widetilde A)$. By a result of \cite{Smu59}, this is equivalent to saying that there is a matrix
$W\in \RR^{s\times u}$ such that $B=AW$ and $C=W^t A W$. Flat extension provide an approach to solving the BTTMP via the following.

\begin{theorem} \cite[Theorem 3.19]{BK12} \label{flat-meas}
		Let $\beta\equiv\beta^{(2n)}$ be a sequence satisfying (\ref{cyclic-cond}). 
	If $\mc M_n(\beta)$ is psd and is a flat extension of $\mc M_{n-1}(\beta)$, then $\beta$ admits a
	representing measure.
\end{theorem}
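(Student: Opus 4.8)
The plan is to reproduce, in the tracial setting, the Curto--Fialkow flat-extension machinery, with the Artin--Wedderburn structure theory of finite-dimensional $*$-algebras playing the role that point evaluation plays in the commutative case. First I would propagate the flat extension: using the Smul'jan characterization recalled above, a flat extension $\mc M_n$ of $\mc M_{n-1}$ determines a unique flat extension $\mc M_{n+1}$, in that the columns indexed by words of length $n+1$ are forced to be the same linear combinations of lower columns as the corresponding length-$n$ products dictate. I would check that the resulting matrix is symmetric, satisfies the cyclic/reversal condition (\ref{cyclic-cond}), remains positive semidefinite, and has unchanged rank; iterating yields an infinite psd moment matrix $\mc M_\infty$ all of whose truncations are flat over $\mc M_{n-1}$.

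Next I would pass to the quotient algebra. Flatness forces the column relations to be recursively generated (the RG mechanism, which for a flat matrix follows algebraically from the Smul'jan relations, compare Theorem \ref{support lemma}(\ref{point-3-support})), so the set $I=\{p\in\RR\langle X,Y\rangle : p(\bX,\bY)=\mbf 0 \text{ in } \mc M_\infty\}$ is a two-sided $*$-ideal, and $\mathcal A:=\RR\langle X,Y\rangle/I$ is a real $*$-algebra with $\dim_\RR \mathcal A=\Rank \mc M_{n-1}=:r<\infty$. The moment data define a unital linear functional $L(w+I)=\beta_w$ on $\mathcal A$; I would verify that it is well defined (moments of elements of $I$ vanish), symmetric ($L(p^*)=L(p)$), tracial (this is exactly $\beta_v=\beta_w$ for $v\overset{\cyc}{\sim}w$), and positive, with the bilinear form $(p,q)\mapsto L(q^*p)$ nondegenerate --- precisely because its Gram matrix is the invertible compression of $\mc M_\infty$ to a column basis.

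The final step is the representation. A finite-dimensional real $*$-algebra carrying a faithful positive trace is semisimple, so by Artin--Wedderburn $\mathcal A\cong\bigoplus_{i=1}^N M_{t_i}(D_i)$ with each $D_i\in\{\RR,\CC,\mathbb H\}$; realifying the complex and quaternionic blocks, I would arrange all blocks to act by genuine real symmetric matrices. Under this isomorphism $L$ becomes a convex combination $\sum_{i=1}^N\lambda_i\Tr$ of normalized traces with $\lambda_i>0$ and $\sum_i\lambda_i=1$, the weights being the positive trace values on the central idempotents. Since $X,Y$ are symmetric in $\mathcal A$ and $L$ is positive, their images $(A_i,B_i)$ in the $i$-th block are real symmetric, so taking the atoms to be $(A_i,B_i)$ of size $t_i$ with densities $\lambda_i$ yields $\beta_w=L(w)=\sum_i\lambda_i\Tr(w(A_i,B_i))$ for all $|w|\le 2n$.

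The main obstacle I expect is the first step: verifying that flat propagation preserves positive semidefiniteness and the cyclic--reversal symmetry simultaneously, since the forced new moments must be shown to be consistent (the same moment reached through different column relations must agree, and the reversal and trace symmetries must survive the extension). A secondary difficulty is the realification in the last step --- ensuring that passing from the $M_{t_i}(D_i)$-blocks to tuples of real symmetric matrices preserves the normalized-trace normalization, so that the densities still sum to $1$.
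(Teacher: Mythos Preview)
The paper does not prove this theorem at all: it is quoted as a preliminary result from \cite[Theorem 3.19]{BK12}, so there is no ``paper's own proof'' to compare against. Your sketch is essentially the argument given in \cite{BK12}: propagate the flat extension to $\mc M_\infty$, pass to the finite-dimensional quotient $*$-algebra carrying a faithful positive tracial state, invoke Artin--Wedderburn to decompose into matrix blocks, and read off the atoms from the images of $X$ and $Y$ in each block. The two technical worries you flag (consistency of the propagated moments under the cyclic and reversal symmetries, and keeping the normalized-trace convention intact after realification) are precisely the points that \cite{BK12} handles; neither is an obstruction, but both require the bookkeeping you anticipate.
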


\subsection{Riesz functional and affine linear transformations} \label{affine linear-trans}

Any sequence $\beta^{(2n)}$ which satisfies \eqref{cyclic-cond}
defines the \textbf{Riesz functional} 
$L_{\beta^{(2n)}}:\mbb{R}\!\left\langle  X,Y\right\rangle_{\leq 2n}\to \RR$ by
$$
	L_{\beta^{(2n)}}(p):=\sum_{|w|\leq 2n} a_{w}\beta_{w},\quad \text{where }p=\sum_{|w|\leq 2n} a_{w}w.
$$
Notice that 
	$$\beta_w=L_{\beta^{(2n)}}(w)\quad \text{for every }|w|\leq 2n.$$
An important result for converting a given moment problem into a simpler, equivalent one is the application of affine linear transformations to a sequence $\beta$. For non-commuting letters $X,Y$ and $a,b,c,d,e,f\in \RR$ with $bf-ce \neq 0$, let us define 
  \begin{equation}\label{trans-form}
	\phi(X,Y)=(\phi_1(X,Y),\phi_2(X,Y)):=(a +bX+cY,d +eX+fY).
  \end{equation}
Let $\widetilde \beta^{(2n)}$ be the sequence obtained by the rule
  \begin{equation}\label{lin-trans-orig}
	\widetilde \beta_{w}=L_{\beta^{(2n)}}(w\circ\phi(X,Y))\quad \text{for every }|w|\leq n.
  \end{equation}
Notice that 
	$$L_{\widetilde\beta^{(2n)}}(p) =L_{\beta^{(2n)}}(p\circ \phi(X,Y))\quad \text{for every }p \in \mbb{R}\!\langle X,Y\rangle_{\leq n}.$$
For a polynomial $p\in \mbb{R}\!\langle X,Y\rangle_{\leq 2n}$ let $\widehat{p}=(a_{w})_{w}$ be its coefficient vector with respect to the  lexicographically-ordered words in $\RR\!\left\langle X,Y\right\rangle_{\leq 2n}$.
The following proposition allows us to make affine linear changes of variables.

\begin{proposition}\cite[Proposition 2.6]{BZ18} \label{linear transform invariance-nc}
	Suppose $\beta^{(2n)}$ and $\widetilde{\beta}^{(2n)}$ are as above with the corresponding moment matrices 			
	$\mc{M}_n$ and $\widetilde{\mc{M}}_n$, respectively.
	Let $J_\phi: \RR\!\left\langle X,Y\right\rangle_{\leq 2n}\to \RR\!\left\langle X,Y\right\rangle_{\leq 2n}$ be 	the linear map given by 
		$$J_\phi \widehat p:=\widehat{p\circ \phi}.$$
	Then the following hold:
\begin{enumerate}
	\item $\widetilde{\mc{M}}_n=(J_\phi)^t \mc{M}_n J_\phi.$
	\item $J_\phi$ is invertible.
	\item  $\widetilde{\mc{M}}_n\succeq 0 \Leftrightarrow  \mc{M}_n\succeq 0.$
	\item $\Rank (\widetilde{\mc{M}}_n)=\Rank (\mc{M}_n).$
	\item\label{invariance-point5} The formula $\mu =\tilde \mu \circ\phi$ establishes a one-to-one correspondence between the sets of 			representing measures of $\beta$ and $\tilde \beta$, 
		and $\phi$ maps $\supp(\mu)$ bijectively onto $\supp(\tilde \mu)$.
	\item \label{flat-ext-M_n} $\mc{M}_n$ admits a flat extension if and only if $\widetilde{\mc{M}}_n$ admits a flat extension.
\end{enumerate}
\end{proposition}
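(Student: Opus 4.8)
The plan is to reduce the whole proposition to the single bilinear identity
\begin{equation*}
  \widehat{p}^{\,t}\,\mc{M}_n\,\widehat{q}=L_{\beta^{(2n)}}(p^{*}q)\qquad (p,q\in\mbb{R}\!\langle X,Y\rangle_{\leq n}),
\end{equation*}
which is immediate from $\mc{M}_n=(\beta_{w_1^{*}w_2})$ together with the fact that the coefficients are real, so that $\big(\sum_w a_w w\big)^{*}=\sum_w a_w w^{*}$. To prove (1) I would first record two structural facts about the substitution $p\mapsto p\circ\phi$: since each $\phi_i=\phi_i(X,Y)$ is symmetric, it commutes with the involution, $(p\circ\phi)^{*}=p^{*}\circ\phi$, and being an algebra homomorphism it is multiplicative, so $(p^{*}q)\circ\phi=(p\circ\phi)^{*}(q\circ\phi)$. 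Using $L_{\widetilde\beta}(\,\cdot\,)=L_{\beta}(\,\cdot\circ\phi)$ I then compute
\begin{equation*}
  \widehat{p}^{\,t}\widetilde{\mc{M}}_n\widehat{q}=L_{\widetilde\beta}(p^{*}q)=L_{\beta}\big((p\circ\phi)^{*}(q\circ\phi)\big)=\widehat{p\circ\phi}^{\,t}\mc{M}_n\,\widehat{q\circ\phi}=(J_\phi\widehat{p})^{t}\mc{M}_n(J_\phi\widehat{q}),
\end{equation*}
and since $p,q$ are arbitrary this is exactly $\widetilde{\mc{M}}_n=(J_\phi)^{t}\mc{M}_n J_\phi$. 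Here I use that $\phi$ is affine, so $\deg(p\circ\phi)\leq\deg(p)$ and $J_\phi$ preserves the filtration by degree; in particular it restricts to the degree-$\leq n$ block that appears in this product.

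For (2), the hypothesis $bf-ce\neq0$ makes $\phi$ an invertible affine transformation with affine inverse $\phi^{-1}$, and $J_{\phi^{-1}}$ is a two-sided inverse of $J_\phi$ because $(p\circ\phi^{-1})\circ\phi=p\circ(\phi^{-1}\circ\phi)=p$. Claims (3) and (4) are then formal consequences: by (1), $\widetilde{\mc{M}}_n$ is congruent to $\mc{M}_n$ through the \emph{invertible} matrix $J_\phi$, so $v^{t}\widetilde{\mc{M}}_n v=(J_\phi v)^{t}\mc{M}_n(J_\phi v)$ transfers positive semidefiniteness in both directions, and congruence by an invertible matrix preserves rank.

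For (5) the essential point is to read $\mu=\widetilde\mu\circ\phi$ at the level of matrix atoms: $\phi$ sends a pair $(A,B)\in(\mathbb{SR}^{t\times t})^{2}$ to $\phi(A,B)=(aI+bA+cB,\,dI+eA+fB)\in(\mathbb{SR}^{t\times t})^{2}$, again a symmetric pair of the \emph{same} size. If $\{(A_i,B_i),\lambda_i\}$ represents $\beta$, then expanding $w\circ\phi=\sum_u c_u u$ and using $\Tr\big((w\circ\phi)(A_i,B_i)\big)=\Tr\big(w(\phi(A_i,B_i))\big)$ gives
\begin{equation*}
  \widetilde\beta_w=L_\beta(w\circ\phi)=\sum_i\lambda_i\,\Tr\big(w(\phi(A_i,B_i))\big),
\end{equation*}
so $\{\phi(A_i,B_i),\lambda_i\}$ represents $\widetilde\beta$; applying the same argument to $\phi^{-1}$ yields the converse. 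Since $\phi$ is a bijection on symmetric pairs that preserves sizes and commutes with orthogonal conjugation, it carries $\supp(\mu)$ bijectively onto $\supp(\widetilde\mu)$, and because it leaves the densities and atom sizes unchanged it preserves the type of a measure, hence minimality.

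Finally, for (6) I would run the transformation one degree higher. As $J_\phi$ preserves the degree filtration, in the splitting of $\mbb{R}\!\langle X,Y\rangle_{\leq n+1}$ into its degree-$\leq n$ part and a complement we have $J_\phi^{(n+1)}=\begin{pmatrix} J_\phi^{(n)} & * \\ 0 & D\end{pmatrix}$, which is block upper triangular and invertible by (2) at level $n+1$. If $\mc{M}_{n+1}$ is a flat (hence psd) extension of $\mc{M}_n$ arising from some $\beta^{(2n+2)}$ satisfying \eqref{cyclic-cond}, then $\widetilde\beta^{(2n+2)}$ again satisfies \eqref{cyclic-cond}, since $L_\beta$ is cyclically invariant and $*$-symmetric and $\phi$ commutes with both operations; thus $\widetilde{\mc{M}}_{n+1}:=(J_\phi^{(n+1)})^{t}\mc{M}_{n+1}J_\phi^{(n+1)}$ is an honest tracial moment matrix. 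The block-triangular form forces its top-left block to equal $(J_\phi^{(n)})^{t}\mc{M}_n J_\phi^{(n)}=\widetilde{\mc{M}}_n$, while (4) at level $n+1$ gives $\Rank\widetilde{\mc{M}}_{n+1}=\Rank\mc{M}_{n+1}=\Rank\mc{M}_n=\Rank\widetilde{\mc{M}}_n$, so $\widetilde{\mc{M}}_{n+1}$ is a flat extension of $\widetilde{\mc{M}}_n$; the reverse direction uses $\phi^{-1}$. I expect this last step to be the main obstacle: the real work is verifying that the transformed higher-order matrix is again a genuine tracial, recursively structured moment matrix extending $\widetilde{\mc{M}}_n$, and this is precisely where the block-triangularity of $J_\phi$ and the cyclic invariance of the Riesz functional are needed.
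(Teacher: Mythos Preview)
The paper does not contain a proof of this proposition; it is quoted verbatim from \cite[Proposition 2.6]{BZ18} and used as a black box. So there is no ``paper's own proof'' to compare against.

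That said, your argument is correct and is essentially the standard one. The reduction to the bilinear identity $\widehat{p}^{\,t}\mc{M}_n\widehat{q}=L_\beta(p^*q)$, combined with the observations that the substitution $p\mapsto p\circ\phi$ is a $*$-preserving algebra endomorphism of degree $\leq 1$ (hence degree-filtration preserving), gives (1) cleanly; (2)--(4) follow formally from congruence by an invertible matrix; (5) is the pushforward of atomic measures under the affine bijection $\phi$ on symmetric pairs, and your identity $\Tr\big((w\circ\phi)(A,B)\big)=\Tr\big(w(\phi(A,B))\big)$ is exactly the right point. For (6) the block upper-triangularity of $J_\phi^{(n+1)}$ with respect to the degree splitting is the key structural fact, and you use it correctly to show that the top-left block of $(J_\phi^{(n+1)})^t\mc{M}_{n+1}J_\phi^{(n+1)}$ is $\widetilde{\mc{M}}_n$ while rank is preserved globally. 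Your caveat at the end --- that one must check the transformed $\widetilde{\mc{M}}_{n+1}$ is again a genuine tracial moment matrix --- is well placed, and your justification (cyclic invariance and $*$-symmetry of $L_\beta$ are preserved under composition with $\phi$) is sufficient.
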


\section{$\mc M_2$ of rank 6 with relation $\bY^2=\mds 1+\bX^2$} \label{S1}
 
We show in this section that for $\mc{M}_{2}$ of rank 6 which satisfies the relation $\bY^{2}=\mds{1}+\bX^{2}$, the existence of a representing measure is equivalent to the feasibility of  three LMI's, and a rank to cardinality condition.
 
\begin{theorem} \label{M(2)-bc3-r6-new1-cor}
	Suppose $\beta\equiv \beta^{(4)}$ is a normalized nc sequence with a moment matrix $\mc{M}_2$ of rank 6 satisfying the relation
	$\bY^2=\mds 1+\bX^2$. 
	Let $L(a,b,c,d,e)$ be the following linear matrix polynomial
\begin{equation*} 
  \begin{blockarray}{cccccccc}
    & \mds{1}&\bX&\bY &\bX^2&\bX\bY&\bY\bX&\bY^{2}\\
  \begin{block}{c(ccccccc)}
   \mds 1& 	a & \beta_X & \beta_Y & b & c & c &  a+b		\\
   \bX & \beta_X & b & c & \beta_{X^3} & \beta_{X^2Y} & \beta_{X^2Y} & \beta_X+\beta_{X^3} \\
   \bY & \beta_{Y}& c & a+b & \beta_{X^2Y} & \beta_X+\beta_{X^3} & \beta_X+\beta_{X^3} & 
				\beta_Y+\beta_{X^2Y}			\\
   \bX^2 & b & \beta_{X^3} & \beta_{X^2Y} & d & e & e & b+d	\\
   \bX\bY & c & \beta_{X^2Y} & \beta_X+\beta_{X^3} & e & b+d & b+d & c+e					\\
   \bY\bX & c & \beta_{X^2Y} & \beta_X+ \beta_{X^3}& e & b+d & b+d & c+e 					\\
   \bY^2  & a+b & \beta_X+\beta_{X^3} & \beta_{Y}+\beta_{X^2Y} & b+d & c+e & c+e & a+2b+d\\
\end{block}
\end{blockarray},
\end{equation*}
	where $a,b,c,d,e\in \RR$.
	Then $\beta$ admits a nc measure if and only if there exist $a,b,c,d,e\in \RR$ such that
	\begin{enumerate}
		\item\label{point1-bc1-r6} $L(a,b,c,d,e)\succeq 0$,
		\item\label{point2-bc1-r6} $\mc{M}_2-L(a,b,c,d,e)\succeq 0$,
		\item\label{point3-bc1-r6} 
			$(\mc{M}_2-L(a,b,c,d,e))_{\{\mds 1, \bX, \bY, \bX\bY\}} \succ 0$,
		\item\label{point4-bc1-r6} $L(a,b,c,d,e)$ is recursively generated and $\Rank(L(a,b,c,d,e))\leq \Card \mathcal V_L$, where 
		 	$$\mathcal V_L:=\displaystyle\bigcap_{
			\substack{g\in \RR[X,Y]_{\leq 2},\\ 
						g(\bX,\bY)=\mbf 0\;\text{in}\;L(a,b,c,d,e)}}
				\left\{ (x,y)\in \RR^2\colon g(x,y)=0 \right\}.$$ 
	\end{enumerate}
	If $\beta$ admits a measure, then there exists a measure of type $(m,1)$, $m\in \{2,3,4,5\}$.

	In particular, $a,b,c,d,e$ satisfying \eqref{point1-bc1-r6}-\eqref{point4-bc1-r6} exist if 
	  \begin{equation}\label{suff-cond-meas}
	    \beta_{X}=\beta_Y=\beta_{X^3}=\beta_{X^2Y}=\beta_{Y^3}=0.
	  \end{equation}
\end{theorem}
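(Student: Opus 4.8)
The plan is to reduce to the commutative world by peeling off the noncommutativity in a single controlled step. First I would substitute the hypotheses \eqref{suff-cond-meas} into $\mc M_2$. All moments of odd total degree then vanish, so $\mc M_2$ is invariant under $(X,Y)\mapsto(-X,-Y)$ and splits as an orthogonal direct sum of its compression to $\{\bX,\bY\}$ and its compression to $\{\mds1,\bX^2,\bY^2,\bX\bY,\bY\bX\}$; the only entries carrying noncommutativity are the $(\bX\bY,\bY\bX)$ ones, governed by $\delta:=\beta_{X^2Y^2}-\beta_{XYXY}$. Since $\bX\bY-\bY\bX$ is represented by a skew-symmetric matrix on every atom, $\Tr((XY-YX)^2)\le0$ forces $\delta\ge0$; moreover $\delta>0$, for otherwise every degree-$4$ moment would agree with its commutative collapse and $\beta$ would be a cm sequence, contradicting the standing hypothesis that $\beta$ is nc.

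Next I would produce $L$ as a scalar multiple of a commutative moment matrix supported on the hyperbola. Concretely, let $W$ be the sum of moment matrices generated by a family of commutative atoms lying on the curve $\bY^2=\mds1+\bX^2$, as in \eqref{subtract-part}; being commutative, $W$ satisfies $\beta^W_{X^2Y^2}=\beta^W_{XYXY}$, and placing its atoms on the variety guarantees $\Ran W\subseteq\Ran\mc M_2$, so the pencil degenerates at a finite value. Define $\alpha^\ast$ to be the smallest $\alpha>0$ with $\Rank(\mc M_2-\alpha W)<6$ and set $L:=\alpha^\ast W$, reading off $a,\dots,e$ as the corresponding entries of $\alpha^\ast W$. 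The crucial bookkeeping point is that subtracting a commutative matrix leaves $\delta$ untouched, so the residual $\mc M_2-L$ still has $\beta_{X^2Y^2}-\beta_{XYXY}=\delta>0$.

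With this choice, conditions \eqref{point1-bc1-r6}, \eqref{point2-bc1-r6}, \eqref{point3-bc1-r6} are almost immediate. Condition \eqref{point1-bc1-r6} holds because $L=\alpha^\ast W$ is a multiple of a genuine moment matrix, hence psd by Proposition \ref{Mn-psd}. Condition \eqref{point2-bc1-r6} is automatic from the extremal choice of $\alpha^\ast$: since $W\succeq0$, the map $\alpha\mapsto\mc M_2-\alpha W$ is nonincreasing in the Löwner order and stays psd of full rank $6$ on $[0,\alpha^\ast)$, so by closedness of the psd cone $\mc M_2-\alpha^\ast W\succeq0$. Condition \eqref{point3-bc1-r6} follows from $\delta>0$: by Corollary \ref{lin-ind-of-4-col} the columns $\mds1,\bX,\bY,\bX\bY$ are linearly independent in $\mc M_2-L$, and a psd matrix in which these four columns are independent has a positive definite compression to those rows and columns. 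For condition \eqref{point4-bc1-r6}, $L$ is a commutative moment matrix of a finite atomic measure on the hyperbola, hence recursively generated by Theorem \ref{support lemma}, while $\mathcal V_L$ contains its finitely many atoms, giving $\Rank L\le\Card\mathcal V_L$.

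The main obstacle is hidden in the choice of $W$. One must arrange that at the critical value $\alpha^\ast$ the rank drop makes $\mc M_2-L$ a psd matrix of rank $\le5$ which retains the relation $\bY^2=\mds1+\bX^2$ and $\delta>0$ in a configuration actually covered by the rank-$\le5$ analysis of \cite{BZ18}; that analysis then supplies a single size-$2$ nc atom, which together with the size-$1$ atoms of $L$ yields a measure of type $(m,1)$. As noted before the theorem, the naive pencil $W=\mc M_2^{(0,1)}+\mc M_2^{(0,-1)}$ can drop rank in a direction that is not representable, which is exactly why the tailored family \eqref{subtract-part} is needed; constructing it so that the degeneration lands in a realizable stratum (and that the resulting minimal type has $m\le5$) is the technical heart. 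Under \eqref{suff-cond-meas} the symmetry and the block splitting reduce this verification to the univariate Hamburger problem attached to the $x$-marginal, whose solvability for psd data closes the argument, and this is where I would carry out the remaining computation.
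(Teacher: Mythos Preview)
Your proposal addresses only the ``in particular'' assertion under \eqref{suff-cond-meas}, not the main biconditional or the type bound $m\in\{2,\dots,5\}$. For that special case your construction---taking $L=\alpha^\ast W$ with $W$ as in \eqref{subtract-part}---is exactly the paper's, and your verification of \eqref{point1-bc1-r6}--\eqref{point4-bc1-r6} is correct: \eqref{point1-bc1-r6} and \eqref{point4-bc1-r6} hold because $L$ is a positive multiple of an honest $4$-atomic cm moment matrix on the hyperbola; \eqref{point2-bc1-r6} follows by continuity of eigenvalues along the pencil (since $\ker\mc M_2\subseteq\ker W$ the rank cannot exceed $6$, so the first rank drop is the first eigenvalue crossing); and \eqref{point3-bc1-r6} follows because subtracting a cm matrix preserves $\delta=\beta_{X^2Y^2}-\beta_{XYXY}>0$, so Corollary~\ref{lin-ind-of-4-col} applies to the psd residual. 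You then drift toward the stronger goal of producing a measure for $\mc M_2-L$ directly, correctly identify that the crux is forcing the rank drop at $\alpha^\ast$ to land in a stratum covered by \cite{BZ18}, but propose to finish via a univariate Hamburger reduction through the block splitting. That last move does not work here: the residual satisfies only the single relation $\bY^2=\mds1+\bX^2$, not the \emph{pair} of relations (including $\bX\bY+\bY\bX=\mbf0$) that drives the decoupling in Section~\ref{S3}. The paper instead chooses $\gamma$ large so that $\alpha_3<\min(\alpha_1,\alpha_2,\tfrac14)$, which guarantees the new relation at $\alpha^\ast$ has the shape $a\mds1+d\bX^2+e(\bX\bY+\bY\bX)=\mbf0$, and then invokes Lemma~\ref{linear-trans} and the rank-$5$ results of \cite[\S6]{BZ18}.

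The main equivalence requires two further arguments you do not supply. For $(\Leftarrow)$ one must show that for \emph{every} choice of $a,\dots,e$ satisfying \eqref{point1-bc1-r6}--\eqref{point4-bc1-r6}---not just your particular $L=\alpha^\ast W$---the residual $\mc M_2-L(a,b,c,d,e)$ admits a measure; the paper does this by a rank split $\Rank(\mc M_2-L)\in\{4,5,6\}$, appealing respectively to \cite[Theorem~3.1(3)]{BZ18}, Lemma~\ref{linear-trans} together with \cite[\S6]{BZ18}, and Theorem~\ref{M(2)-bc3-r6-new1}\eqref{M(2)-c3-r6-pt1}. For $(\Rightarrow)$ one needs that any nc measure for $\mc M_2$ can be taken of type $(m,1)$ with the size-$2$ atom of the explicit form \eqref{(X,Y)-form}, so that its cm complement really has the shape $L(a,b,c,d,e)$; this is Theorem~\ref{M(2)-bc3-r6-new1}\eqref{M(2)-c3-r6-pt2}, and its proof in turn rests on the \eqref{suff-cond-meas} case combined with \cite[Proposition~7.3]{BZ18}. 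The bound $m\le5$ then comes from the Curto--Fialkow theory applied to the cm part on the hyperbola.
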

	
Before proving Theorem \ref{M(2)-bc3-r6-new1-cor} we need some auxiliary results.
The form of $\mc{M}_2$ is given by the following proposition.

\begin{proposition}
	Let $\beta\equiv \beta^{(4)}$ be a nc sequence with a moment matrix $\mc{M}_2$ satisfying the relation
		\begin{equation}\label{r6-rel-bc3-eq} 
			\bY^2=\mds{1}+\bX^2.
		\end{equation}
	Then $\mc{M}_2$ is of the form
		\begin{equation}\label{bc3-r6}
	\begin{mpmatrix}
 		\beta_{1} & \beta_{X} & \beta_{Y} & \beta_{X^2} & \beta_{XY} & \beta_{XY} & 			
			\beta_{1}+\beta_{X^2} \\
 		\beta_{X} & \beta_{X^2} & \beta_{XY} & \beta_{X^3} & \beta_{X^2Y} & \beta_{X^2Y} & 		
			\beta_{X}+\beta_{X^3} \\
		 \beta_{Y} & \beta_{XY} & \beta_{1}+\beta_{X^2} & \beta_{X^2Y} & \beta_{X}+\beta_{X^3} & 
		 	\beta_{X}+\beta_{X^3} & \beta_{Y}+\beta_{X^2Y} \\
 		\beta_{X^2} & \beta_{X^3} & \beta_{X^2Y} & \beta_{X^4} & \beta_{X^3Y} & 		\beta_{X^3Y} & \beta_{X^2}+\beta_{X^4} \\
		\beta_{XY} & \beta_{X^2Y} & \beta_{X}+\beta_{X^3} & \beta_{X^3Y} & 
			\beta_{X^2}+\beta_{X^4} & \beta_{XYXY} & \beta_{XY}+\beta_{X^3Y} 
			\\
		 \beta_{XY} & \beta_{X^2Y} & \beta_{X}+\beta_{X^3} & \beta_{X^3Y} & \beta_{XYXY} & 		
		 	\beta_{X^2}+\beta_{X^4} & \beta_{XY}+\beta_{X^3Y} \\
		 \beta_{1}+\beta_{X^2} & \beta_{X}+\beta_{X^3} & \beta_{Y}+\beta_{X^2Y} & 
			\beta_{X^2}+\beta_{X^4} & 
		\beta_{XY}+\beta_{X^3Y} & \beta_{XY}+\beta_{X^3Y} & \beta_{1}+2 \beta_{X^2}+\beta_{X^4} 
	\end{mpmatrix}.
		\end{equation}
\end{proposition}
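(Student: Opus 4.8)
The plan is to first record the generic shape of $\mc{M}_2$ for an arbitrary nc sequence $\beta^{(4)}$ satisfying \eqref{cyclic-cond}, and then impose the single column relation \eqref{r6-rel-bc3-eq} to eliminate the moments involving high powers of $Y$. For Step~1 I would use the definition $\mc{M}_2=(\beta_{w_1^\ast w_2})_{|w_1|,|w_2|\leq 2}$, with rows and columns indexed by $\mds 1,\bX,\bY,\bX^2,\bX\bY,\bY\bX,\bY^2$, computing each entry $\beta_{w_1^\ast w_2}$ directly and then collapsing entries using the two symmetries in \eqref{cyclic-cond}: invariance under the involution $w\mapsto w^\ast$ (which also yields the symmetry of $\mc{M}_2$, so only the $28$ upper-triangular entries need to be checked) and invariance under cyclic permutation $\overset{\cyc}{\sim}$. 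For instance, the $(\bX\bY,\bY\bX)$ entry is $\beta_{(XY)^\ast YX}=\beta_{YXYX}$, and $YXYX\overset{\cyc}{\sim}XYXY$ gives $\beta_{XYXY}$, whereas the two diagonal entries $(\bX\bY,\bX\bY)$ and $(\bY\bX,\bY\bX)$ are both $\beta_{X^2Y^2}$ because $YX\cdot XY=YX^2Y\overset{\cyc}{\sim}X^2Y^2$ and $XY\cdot YX=XY^2X\overset{\cyc}{\sim}X^2Y^2$. Carrying this out for every entry produces a symmetric matrix whose free moments are $\beta_1,\beta_X,\beta_Y,\beta_{X^2},\beta_{XY},\beta_{X^3},\beta_{X^2Y},\beta_{X^4},\beta_{X^3Y},\beta_{XYXY}$ together with $\beta_{Y^2},\beta_{XY^2},\beta_{Y^3},\beta_{X^2Y^2},\beta_{XY^3},\beta_{Y^4}$.

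For Step~2 I would impose \eqref{r6-rel-bc3-eq}, which asserts that the $\bY^2$-column of $\mc{M}_2$ equals the sum of the $\mds 1$-column and the $\bX^2$-column. Equating these columns entrywise yields seven scalar identities; reading them off row by row gives
\begin{align*}
\beta_{Y^2} &= \beta_1+\beta_{X^2}, & \beta_{XY^2} &= \beta_X+\beta_{X^3}, \\
\beta_{Y^3} &= \beta_Y+\beta_{X^2Y}, & \beta_{X^2Y^2} &= \beta_{X^2}+\beta_{X^4}, \\
\beta_{XY^3} &= \beta_{XY}+\beta_{X^3Y}, & \beta_{Y^4} &= \beta_{Y^2}+\beta_{X^2Y^2},
\end{align*}
where the $\bY\bX$-row reproduces the $\bX\bY$-row (a consistency check), and the last identity, after substituting the first and fourth, becomes $\beta_{Y^4}=\beta_1+2\beta_{X^2}+\beta_{X^4}$. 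Substituting these six expressions into the generic matrix of Step~1 gives exactly \eqref{bc3-r6}.

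The only point requiring care, and the closest thing to an obstacle here, is that each eliminated moment occurs in several entries of $\mc{M}_2$ that do not all lie in the $\bY^2$-column: for example $\beta_{Y^2}$ also sits at $(\bY,\bY)$, and $\beta_{X^2Y^2}$ sits at both diagonal positions $(\bX\bY,\bX\bY)$ and $(\bY\bX,\bY\bX)$, while the column relation constrains these moments only through the $(\mds 1,\bY^2)$ and $(\bX^2,\bY^2)$ positions. I would therefore verify that the substitution of Step~2 is consistent across all occurrences; this is automatic once the cyclic equivalences of Step~1 have been recorded correctly, since by construction every occurrence of a given moment already carries the same symbol. Note that no rank or positivity hypothesis enters, so the asserted form \eqref{bc3-r6} holds for every nc sequence whose $\mc{M}_2$ satisfies \eqref{r6-rel-bc3-eq}.
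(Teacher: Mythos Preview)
Your proposal is correct and follows essentially the same approach as the paper, which simply states that the form \eqref{bc3-r6} is ``an easy computation using the relation \eqref{r6-rel-bc3-eq}''. Your Step~2 identities are exactly those the authors record (in a commented-out passage), and your additional care about consistency across multiple occurrences of each moment is a welcome elaboration of what the paper leaves implicit.
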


\begin{proof}
	This is an easy computation using the relation (\ref{r6-rel-bc3-eq}).
\end{proof}

\begin{lemma}\label{linear-trans}
Suppose $\beta\equiv \beta^{(4)}$ is a normalized nc sequence with a positive semidefinite and recursively generated 
moment matrix $\mc M_2$ of rank 5 satisfying the relations
	\begin{equation}\label{starting-relation}
		\bY^2=\mds 1+\bX^2, \quad a\mds 1+ d\bX^2+e(\bX\bY+\bY\bX)=\mbf{0}, 
	\end{equation}
for some $a,d,e\in \RR$ which are not all zero. 
Then there is a linear transformation of the form
	\begin{equation}\label{lin-trans1}
		\phi(X,Y)=(\phi_1(X,Y),\phi_2(X,Y)):=(bX+cY,eX+fY),
	\end{equation}
where $b,c,e,f\in \RR$ satisfy $bf-ce \neq 0$, such that the sequence 
$\widetilde{\beta}^{(4)}$ obtained by the rule \eqref{lin-trans-orig} has a 
a moment matrix $\widetilde{\mc M}_2$ satisfying the relation 
	\begin{equation}\label{first-rel}
	\bX\bY+\bY\bX=\mbf 0
	\end{equation}
and one of the relations
	\begin{equation}\label{second-rel}
	  \bY^2=\mds 1\quad \text{or}\quad \bX^2+\bY^2=\mds 1
	  \quad \text{or}\quad \bY^2-\bX^2=\mds 1.
	\end{equation}
\end{lemma}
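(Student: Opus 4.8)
The plan is to work entirely with the two‑dimensional space of column relations of $\mc M_2$ and to normalize it by a homogeneous linear change of variables, using positivity and the rank hypothesis to exclude all degenerate configurations. Since $\Rank\mc M_2=5$, the relation space is spanned by $R_1:=\bY^2-\mds 1-\bX^2$ and $R_2:=a\mds 1+d\bX^2+e(\bX\bY+\bY\bX)$. To a symmetric quadratic $\alpha\bX^2+\beta(\bX\bY+\bY\bX)+\gamma\bY^2$ I attach the matrix $\left(\begin{smallmatrix}\alpha&\beta\\\beta&\gamma\end{smallmatrix}\right)$, so that under a substitution $\phi$ of the form \eqref{lin-trans1}, with matrix $P=\left(\begin{smallmatrix}b&c\\e&f\end{smallmatrix}\right)$, the quadratic part transforms by the congruence $Q\mapsto P^tQP$ while the $\mds 1$-coefficient is left unchanged. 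By Proposition \ref{linear transform invariance-nc} a polynomial $q$ is a relation of $\widetilde{\mc M}_2$ exactly when $q\circ\phi\in\Span\{R_1,R_2\}$; this is the dictionary I use throughout, and it reduces the whole problem to a congruence-normalization of the pair $\{R_1,R_2\}$.

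First I would isolate the unique (up to scalar) relation with vanishing $\mds 1$-coefficient. As $R_1$ has nonzero constant, the constant is a surjective functional on the relation space, so its kernel is spanned by $R_0:=aR_1+R_2=(d-a)\bX^2+e(\bX\bY+\bY\bX)+a\bY^2$, with matrix $M_0=\left(\begin{smallmatrix}d-a&e\\e&a\end{smallmatrix}\right)$. Since $R_1,R_2$ are independent, $R_0\neq\mbf 0$ and $M_0\neq\mbf 0$. The key claim is that $\det M_0<0$, i.e. $M_0$ has signature $(1,1)$.

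This claim is the hard part, and it is exactly where positivity and $\Rank\mc M_2=5$ are used. Suppose $\det M_0\geq 0$; then the $2\times 2$ symmetric matrix $M_0$ is semidefinite, and after replacing $R_0$ by $-R_0$ I may assume $M_0\succeq 0$. Factoring $M_0=C^tC$ rewrites $R_0$ as $U^2+V^2$ (or $U^2$, if $\Rank M_0=1$) for real linear forms $U,V$ spanning $\{\bX,\bY\}$. Reading off the $\mds 1$-entry of the zero column $R_0(\bX,\bY)=\mbf 0$ gives $L_\beta(U^2)+L_\beta(V^2)=0$; since each summand equals $\langle\mc M_1\widehat U,\widehat U\rangle\geq 0$ by $\mc M_1\succeq 0$, both vanish, and vanishing of $\langle\mc M_1\widehat U,\widehat U\rangle$ forces $\mc M_1\widehat U=\mbf 0$, i.e. $U(\bX,\bY)=\mbf 0$ (and likewise $V$). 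Thus we obtain a nontrivial degree-one column relation, which by recursive generation (Theorem \ref{support lemma}(\ref{point-3-support})) pulls all of $\bX,\bY,\bX^2,\bX\bY,\bY\bX,\bY^2$ into $\Span\{\mds 1,\bX,\bX^2\}$ or $\Span\{\mds 1,\bY,\bY^2\}$, so $\Rank\mc M_2\leq 3$, contradicting rank $5$. Hence $\det M_0<0$.

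With $M_0$ of signature $(1,1)$, Sylvester's law gives an invertible $P$ with $P^tJP=M_0$, where $J=\left(\begin{smallmatrix}0&1\\1&0\end{smallmatrix}\right)$; the corresponding $\phi$ has $bf-ce=\det P\neq 0$ and satisfies $(\bX\bY+\bY\bX)\circ\phi=R_0$, so by the dictionary $\widetilde{\mc M}_2$ satisfies \eqref{first-rel}. For the second relation I use that $R_1\circ\phi^{-1}$ is a relation of $\widetilde{\mc M}_2$ whose $\mds 1$-coefficient is still $-1$, because the substitution is homogeneous; this nonzero constant makes it independent of $\bX\bY+\bY\bX$. Reducing it modulo $\bX\bY+\bY\bX=\mbf 0$ cancels the cross term and leaves a relation $\alpha'\bX^2+\gamma'\bY^2=\mds 1$. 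It then remains to normalize this by a diagonal rescaling and possible swap of $\bX,\bY$, which preserve $\bX\bY+\bY\bX=\mbf 0$, reading off $\bY^2=\mds 1$, $\bX^2+\bY^2=\mds 1$, or $\bY^2-\bX^2=\mds 1$ according to the signs of $\alpha',\gamma'$; the sign-inconsistent and rank-deficient subcases (both coefficients negative, or $\alpha'=\gamma'=0$) are ruled out by the same positivity/rank argument as above. Composing the linear maps from these two normalization steps yields a single $\phi$ of the required form \eqref{lin-trans1}, completing the proof.
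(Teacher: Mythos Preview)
Your proof is correct and takes a genuinely different route from the paper. The paper proceeds by a direct case analysis: when $e=0$ it shows $\bX^2=\widetilde a\,\mds 1$ with $\widetilde a>0$ (using positivity) and writes down an explicit $\phi$ landing in $\bX\bY+\bY\bX=\mbf 0$, $\bX^2+\bY^2=\mds 1$; when $e\neq 0$ it defers to \cite[Proposition 4.1(1)]{BZ18}, whose proof (reproduced in the appendix) is a long chain of explicit substitutions split into many subcases according to the size of the parameter $d$. Your argument instead works intrinsically with the $2$-dimensional relation space: you isolate the unique relation $R_0$ with zero constant term, attach to its quadratic part the symmetric $2\times 2$ matrix $M_0$, and use positivity of $\mc M_2$ together with $\Rank\mc M_2=5$ to force $M_0$ to be indefinite (the key step, via the sum-of-squares reading of a semidefinite $M_0$ and the resulting degree-one column relation). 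Sylvester's law then gives the congruence to $J=\left(\begin{smallmatrix}0&1\\1&0\end{smallmatrix}\right)$, producing $\bX\bY+\bY\bX=\mbf 0$, after which a diagonal rescaling and possible swap normalize the remaining relation. What your approach buys is a conceptual explanation of why exactly the three target forms \eqref{second-rel} arise and why $\bY^2=\bX^2$ cannot (the constant term $-1$ of $R_1$ is preserved by homogeneous $\phi$), with essentially no case analysis; what the paper's approach buys is explicit formulas for $\phi$ in each case, which may be useful computationally. One small imprecision: when you write that vanishing of $\langle\mc M_1\widehat U,\widehat U\rangle$ forces $\mc M_1\widehat U=\mbf 0$ and hence $U(\bX,\bY)=\mbf 0$, you should note that the same quadratic form equals $\langle\mc M_2\widetilde U,\widetilde U\rangle$ for the zero-padded $\widetilde U$, so it is $\mc M_2\widetilde U=\mbf 0$ (i.e., the relation in $\mc M_2$) that you actually obtain and then feed into recursive generation.
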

	
\begin{proof}

	We separate two cases according to $e$ in \eqref{starting-relation}.\\

	\noindent \textbf{Case 1: }$e=0$.\\
	
	First note that $d\neq 0$ in \eqref{starting-relation}, 
	otherwise $a\mds 1=\mbf 0$ for $a\neq 0$ which is a contradiction since $\mds 1\neq \mbf 0$ ($\beta_1=1$). Hence we
	can rewrite \eqref{starting-relation} as $\bX^2=\widetilde{a}\mds 1$ where $\widetilde{a}\neq 0$.  
	Therefore $\bY^2=(1+\widetilde{a})\mds 1$. 
	Since $\mc M_2$ is psd with a nonzero column $\bX$ (otherwise $\Rank \mc M_2<5$),
	it follows that $0<[\mc M_2]_{\{\bX\}}= \beta_{X^2}$.
	Thus also the column $\bX^2$ is nonzero (since it contains $\beta_{X^2} $), 
	which implies by $\mc M_2$ being psd that  $0< [\mc M_2]_{\{\bX^2\}}=\beta_{X^4}$.
	Hence from $0<\beta_{X^4}=\widetilde{a}\beta_{X^2}$,
	it follows that $\widetilde{a}>0$.
	Now applying the transformation
	$$\phi(X,Y)= \left( \frac{X}{2\sqrt{\widetilde a}}+\frac{Y}{2\sqrt{1+\widetilde a}} ,
			 \frac{Y}{2\sqrt{1+\widetilde a}}-\frac{X}{2\sqrt{\widetilde a}} \right)$$
	to the moment sequence $\beta_{w},$ we get a 
	moment sequence $\widetilde{\beta}_{w}$ with a moment matrix
	$\widetilde{\mc M}_2$ of rank 5 satisfying the relations
	\begin{equation}\label{rel0}
		\bX\bY+\bY\bX=\mbf 0,\quad \bX^2+\bY^2=\mds 1.
	\end{equation}
  	\vspace{0pt}

	\noindent \textbf{Case 2: }$e\neq 0$.\\
 
	Given the starting relations \eqref{starting-relation} we are in Case 2.4 in the proof of 
	\cite[Proposition 4.1 (1)]{BZ18}. Following the proof we see that
	after using only transformations of the form \eqref{trans-form} we end up with a moment sequence 
	$\widetilde{\beta}^{(4)}$ such that $\widetilde{\mc M}_2$ satisfies the relations \eqref{first-rel} and 
	\eqref{second-rel}. Precise transformations can be found in Appendix \ref{append-transforms}. 
\end{proof}

\begin{lemma}\label{linear-trans-on-zero-moments}
Suppose $\beta\equiv \beta^{(4)}$ is a nc sequence satisfying 
	$$\beta_{X}=\beta_Y=\beta_{X^3}=\beta_{X^2Y}=\beta_{Y^3}=0.$$
Let $\phi$ be a linear transformation defined by
	\begin{equation}\label{lin-trans}
		\phi(X,Y)=(\phi_1(X,Y),\phi_2(X,Y)):=(bX+cY,eX+fY),
	\end{equation}
where $b,c,e,f\in \RR$ satisfy $bf-ce \neq 0$. 
The sequence $\widetilde{\beta}^{(4)}$ obtained by the rule \eqref{lin-trans-orig} also satisfies
	$$\widetilde{\beta}_{X}=\widetilde{\beta}_Y=\widetilde{\beta}_{X^3}=
		\widetilde{\beta}_{X^2Y}=\widetilde{\beta}_{Y^3}=0.$$
\end{lemma}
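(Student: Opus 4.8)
The plan is to reduce everything to a commutative computation, exploiting that in low degree the trace moments cannot separate a word from its commutative collapse. First I would record the key structural fact: for words of degree at most $3$, cyclic-equivalence classes coincide with the fibres of the collapse map $w\mapsto\check w$. Indeed the only nontrivial classes in degree $\le 3$ are $\{X^2Y,XYX,YX^2\}$ and $\{XY^2,YXY,Y^2X\}$, together with the singletons $\{X^3\},\{Y^3\}$ and the even simpler lists in degrees $0,1,2$, and each of these maps to a single monomial among $x^3,x^2y,xy^2,y^3$ (resp.\ the lower-degree monomials). Hence any two words of degree $\le 3$ with equal collapse are cyclically equivalent, so by \eqref{cyclic-cond} they carry equal moments. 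This lets me define a linear functional $\ell$ on $\RR[x,y]_{\le 3}$ by $\ell(\check w)=\beta_w$; since the collapse is an algebra homomorphism and $L_{\beta^{(4)}}$ is linear, it follows that $L_{\beta^{(4)}}(p)=\ell(\check p)$ for every $p\in\RR\langle X,Y\rangle_{\le 3}$.

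Next I would compute the transformed moments straight from \eqref{lin-trans-orig} and \eqref{lin-trans}. For $w\in\{X,Y,X^3,X^2Y,Y^3\}$ the polynomial $w\circ\phi$ has degree $|w|\le 3$, and because $\phi$ is homogeneous, applying the collapse to $w\circ\phi$ produces the form $\check w(bx+cy,\,ex+fy)$, which is homogeneous of the same degree $|w|\in\{1,3\}$. Therefore
\[
\widetilde\beta_w=L_{\beta^{(4)}}(w\circ\phi)=\ell\left(\check w(bx+cy,\,ex+fy)\right),
\]
an evaluation of $\ell$ on a homogeneous form of degree $1$ or $3$. The degree-$1$ case is immediate: $\ell(x)=\beta_X=0$ and $\ell(y)=\beta_Y=0$, so $\widetilde\beta_X=\widetilde\beta_Y=0$. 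For the degree-$3$ moments it suffices to show that $\ell$ kills every cubic monomial, for then $\ell$ annihilates the whole cubic part of $\RR[x,y]$, and in particular each of the forms displayed above, yielding $\widetilde\beta_{X^3}=\widetilde\beta_{X^2Y}=\widetilde\beta_{Y^3}=0$.

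The step that is not automatic, and which I expect to be the main obstacle, is precisely this vanishing of the cubic part of $\ell$. The listed hypotheses give $\ell(x^3)=\beta_{X^3}=0$, $\ell(x^2y)=\beta_{X^2Y}=0$ and $\ell(y^3)=\beta_{Y^3}=0$, but say nothing directly about $\ell(xy^2)=\beta_{XY^2}$, and this moment genuinely matters: for $\phi=(Y,X)$ the formula above gives $\widetilde\beta_{X^2Y}=\ell(xy^2)=\beta_{XY^2}$. The missing input is supplied by the column relation governing this section: from $\bY^2=\mds 1+\bX^2$ one reads off $\beta_{XY^2}=\beta_X+\beta_{X^3}$, which vanishes under the present hypotheses. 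Consequently the computation should be carried out after recording $\beta_{XY^2}=0$, which the standing relation guarantees; with the full cubic part of $\ell$ then identically zero, the argument closes exactly as above.
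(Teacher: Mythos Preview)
Your argument is essentially the paper's: both expand $w\circ\phi$ for $|w|\le 3$, use cyclic equivalence to reduce to the moments $\beta_X,\beta_Y,\beta_{X^3},\beta_{X^2Y},\beta_{XY^2},\beta_{Y^3}$, and conclude. Your packaging via the collapse functional $\ell$ is a tidier version of the same direct computation the paper carries out line by line in Appendix~\ref{calc-for-lemma-zero-moments}.

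You are right to single out $\beta_{XY^2}$. The paper's own expansions in Appendix~\ref{calc-for-lemma-zero-moments} also contain $\beta_{XY^2}$ in each cubic line and silently set it to zero; as literally stated, the lemma's hypotheses do not include $\beta_{XY^2}=0$, and without it the conclusion fails (your example $\phi=(Y,X)$ shows this). In every place the paper actually applies the lemma, however, the input sequence does satisfy $\beta_{XY^2}=0$---either because it is recorded separately beforehand (see \eqref{B-moments} and \eqref{zero-moments-M}) or, equivalently, because the ambient relation $\bY^2=\mds 1+\bX^2$ forces $\beta_{XY^2}=\beta_X+\beta_{X^3}=0$, exactly as you argue. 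So your patch is correct and matches what the paper implicitly uses.
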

	
\begin{proof}
	This is an easy direct calculation. The details can be found in Appendix \ref{calc-for-lemma-zero-moments}.
\end{proof}

The following theorem characterizes normalized nc sequences $\beta$ with a moment matrix $\mc{M}_2$ of rank 6 
satisfying the relation $\bY^2=\mds 1+\bX^2$, which admit a nc measure.

\begin{theorem} \label{M(2)-bc3-r6-new1} 
	Suppose $\beta\equiv \beta^{(4)}$ is a normalized nc sequence with a moment matrix $\mc{M}_2$ of rank 6 satisfying the relation
	$\bY^2=\mds 1+\bX^2$. Then $\beta$ admits a nc measure if and only if $\mc{M}_2$ is positive semidefinite and
	one of the following is true:
		\begin{enumerate}
			\item \label{M(2)-c3-r6-pt1} $\beta_{X}=\beta_Y=\beta_{X^3}=\beta_{X^2Y}=\beta_{Y^3}=0$.
				In this case there exists a nc measure of type $(m,1)$, $m\in \NN$.
			\item \label{M(2)-c3-r6-pt2}
			There exist 
				$$a_1\in (0,1),\quad 	
				a_2\in \left(-2\sqrt{a_1(1+a_1)}, 2\sqrt{a_1(1+a_1)}\right)$$
				such that 
					$$M:=\mc{M}_2-\xi\mc{M}^{(X,Y)}_2$$ 
				is a positive semidefinite, recursively generated cm moment matrix satisfying 
				$$\Rank M\leq \Card \mathcal V_M:=
				\displaystyle\bigcap_{\substack{g\in \RR[X,Y]_{\leq 2},\\ 
						g(\bX,\bY)=\mbf 0\;\text{in}\;M}}
					\left\{ (x,y)\in \RR^2\colon g(x,y)=0 \right\},$$
				where
					\begin{equation}\label{(X,Y)-form}
						X=\begin{pmatrix} \sqrt{a_1} & 0 \\
										0 & -\sqrt{a_1} \end{pmatrix},\quad
						Y=\sqrt{(1+a_1)} \begin{pmatrix} \frac{a}{2} & \frac{1}{2}\sqrt{4-a^2}\\
							\frac{1}{2}\sqrt{4-a^2} & -\frac{a}{2}
							\end{pmatrix},
					\end{equation}	
				with $\displaystyle a=\frac{a_2}{\sqrt{a_1(1+a_1)}},$
				and $\xi>0$ is the smallest positive number such that  
				$$\Rank{\left(\mc{M}_2-\xi\mc{M}^{(X,Y)}_2\right)}< \Rank{\mc{M}_2}.$$
		\end{enumerate}
	Moreover, if $\beta$ admits a measure, then there exists a measure of type $(m,1)$, $m\in \{2,3,4,5\}$.
\end{theorem}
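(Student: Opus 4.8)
The plan is to prove both implications by reducing the tracial problem, through the atomic structure of its representing measures, to a single \emph{commutative} truncated moment problem solvable by Curto--Fialkow theory. At the outset Proposition \ref{Mn-psd} forces $\mc M_2\succeq 0$, and rank $6$ together with $\bY^2=\mds 1+\bX^2$ forces $\mds 1,\bX,\bY,\bX^2,\bX\bY,\bY\bX$ to be linearly independent; in particular $\bX\bY\neq\bY\bX$, so by \eqref{bc3-r6} we get $\beta_{X^2}+\beta_{X^4}\neq\beta_{XYXY}$, i.e. $\beta_{X^2Y^2}\neq\beta_{XYXY}$, whence (cf. Corollary \ref{lin-ind-of-4-col}) $\beta$ is not a commutative sequence and every representing measure must contain an atom of size $\geq 2$. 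The structural fact underlying everything is that these noncommutative atoms are \emph{centered}: by \cite[Proposition 5.1]{BZ18} any size $\geq 2$ atom, which by Theorem \ref{support lemma} must lie in $\mc Z(\bY^2-\mds 1-\bX^2)$, may be conjugated into the form \eqref{atoms-nice}, which is traceless in both coordinates and has $Y_i^2$ scalar. A short reduction then gives $\Tr(w(X_i,Y_i))=0$ for every word $w$ of odd degree, so the noncommutative atoms contribute nothing to $\beta_X,\beta_Y,\beta_{X^3},\beta_{X^2Y},\beta_{Y^3}$, which are carried entirely by the size $1$ atoms.

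I would prove sufficiency first, treating alternative \ref{M(2)-c3-r6-pt2} as the model case. There $(X,Y)$ of \eqref{(X,Y)-form} is a centered size $2$ atom with $X^2=a_1I$, $Y^2=(1+a_1)I$ and $XY+YX$ scalar, and $\xi>0$ is minimal with $\Rank(\mc M_2-\xi\mc M_2^{(X,Y)})<6$. By hypothesis $M=\mc M_2-\xi\mc M_2^{(X,Y)}$ is a positive semidefinite, recursively generated commutative moment matrix with $\Rank M\leq\Card\mathcal V_M$, so the Curto--Fialkow solution of the commutative quartic problem (see \cite{CF96} and Theorem \ref{flat-meas}) produces a finitely atomic measure $\sum_j\rho_j\delta_{(x_j,y_j)}$ of $M$; adjoining $(X,Y)$ with weight $\xi$ yields $\mc M_2=\sum_j\rho_j\mc M_2^{(x_j,y_j)}+\xi\,\mc M_2^{(X,Y)}$, a noncommutative measure of type $(m,1)$. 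For alternative \ref{M(2)-c3-r6-pt1} one uses the subtraction technique announced in the introduction: with all odd moments vanishing, subtract $\alpha W$, with $W$ the commutatively generated matrix of \eqref{subtract-part} and $\alpha>0$ minimal forcing a rank drop; the resulting matrix has rank $\leq 5$ and retains the positive defect $\beta_{X^2Y^2}-\beta_{XYXY}$, so by the complete rank $\leq 5$ classification of \cite[\S6]{BZ18} it admits a measure, necessarily with a size $2$ atom, and recombining with the commutative atoms generating $W$ gives a measure of type $(m,1)$.

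For necessity, start from a representing measure, split it into its size $1$ (commutative) part $M_{\mathrm{cm}}$ and its size $\geq 2$ part of the form \eqref{atoms-nice}, with total noncommutative contribution $\mc M_2^{\mathrm{nc}}$. If all odd moments of $\beta$ vanish we are in case \ref{M(2)-c3-r6-pt1}; otherwise I realize case \ref{M(2)-c3-r6-pt2} by \emph{consolidating} $\mc M_2^{\mathrm{nc}}$ into one size $2$ atom. Since each nice-form atom is centered with scalar $Y_i^2$, $\mc M_2^{\mathrm{nc}}$ has vanishing odd moments and carries the whole defect $\beta_{X^2Y^2}-\beta_{XYXY}>0$, which no commutative atom can produce; the claim to establish is that $\mc M_2^{\mathrm{nc}}=\xi\,\mc M_2^{(X,Y)}+R$ for a single atom $(X,Y)$ of the form \eqref{(X,Y)-form} and a positive semidefinite commutative moment matrix $R$. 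Conjugating $(X,Y)$ into the form \eqref{(X,Y)-form} is permitted because simultaneous orthogonal conjugation preserves \eqref{truncated tracial moment sequence special-biv}, and diagonalizing the $X$-component yields exactly that parametrization with $a_1=\beta^{(X,Y)}_{X^2}$ and $a_2$ the anticommutator scalar. Granting the claim, $M=\mc M_2-\xi\mc M_2^{(X,Y)}=M_{\mathrm{cm}}+R$ is a positive semidefinite, recursively generated commutative moment matrix carrying a measure, hence satisfies $\Rank M\leq\Card\mathcal V_M$; and commutativity of $M$ forces the two column relations $\bY^2=\mds 1+\bX^2$ and $\bX\bY=\bY\bX$, so $\Rank M\leq 5$ and the subtraction is indeed rank-dropping.

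The type bound in the final sentence is a rank count: the atom \eqref{(X,Y)-form} has independent columns $\mds 1,\bX,\bY,\bX\bY$, so $\Rank\mc M_2^{(X,Y)}=4$; hence $\Rank M\leq 5$ and $\Rank M\geq\Rank\mc M_2-\Rank\mc M_2^{(X,Y)}=2$, and since a commutative quartic moment matrix with a minimal measure has exactly $\Rank M$ nodes one gets a measure of type $(m,1)$, $m\in\{2,3,4,5\}$. I expect the consolidation to be the genuine obstacle: proving that the joint moment matrix of arbitrarily many nice-form atoms, minus one suitably parametrized size $2$ atom of the form \eqref{(X,Y)-form}, is again positive semidefinite and commutative, and that the density needed is precisely the \emph{smallest} rank-dropping value $\xi$ of the theorem --- equivalently, that the first rank drop of $\mc M_2-\xi\mc M_2^{(X,Y)}$ coincides with the vanishing of the defect. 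This is the analogue for $\bY^2=\mds 1+\bX^2$ of the ``size $3$ atoms are not needed'' analysis of Section \ref{S2}; I would attack it by writing the defect of a single nice atom as an explicit positive function of $(a_1,a_2)$, matching it to $\beta_{X^2Y^2}-\beta_{XYXY}$, and taking $\xi$ at the threshold preserving positive semidefiniteness, after which the reduction to the commutative conic is completed through Lemma \ref{linear-trans}, the degenerate hyperbola solution of Section \ref{S4}, and \cite{CF96}.
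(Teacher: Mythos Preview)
Your sufficiency argument for alternative \eqref{M(2)-c3-r6-pt2} matches the paper. For alternative \eqref{M(2)-c3-r6-pt1} you have the right subtraction $B(\alpha,\gamma)$ of \eqref{subtract-part}, but you skip the actual content: one must show that at the \emph{first} rank drop the new column relation has the form $a\mds 1+d\bX^2+e(\bX\bY+\bY\bX)=\mbf 0$ (rather than, say, a relation destroying the independence of $\mds 1,\bX,\bY,\bX\bY$). The paper does this by a careful comparison of the zeros $\alpha_1,\alpha_2,\alpha_3$ of the $2\times 2$ principal minors, proving that for $\gamma$ large enough $\alpha_3<\min(\alpha_1,\alpha_2,\tfrac14)$, which forces the rank drop to occur in the $\{\mds 1,\bX^2,\bX\bY,\bY\bX\}$ block. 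Without this you cannot simply invoke the rank $\leq 5$ classification, because that classification is stated for the canonical relations of Lemma \ref{linear-trans} and you need Lemmas \ref{linear-trans}--\ref{linear-trans-on-zero-moments} to get there.

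For necessity your plan diverges from the paper and carries the gap you yourself flag. You propose to consolidate an arbitrary noncommutative part $\mc M_2^{\mathrm{nc}}$ into a single size $2$ atom plus a psd commutative remainder, and you correctly identify this as the hard step; indeed your suggested approach (match the defect, take $\xi$ at threshold) does not obviously force the remainder to be \emph{commutative} at the first rank drop. The paper avoids this entirely: once \eqref{M(2)-c3-r6-pt1} is established as a sufficiency statement, it invokes \cite[Proposition 7.3]{BZ18} (whose proof extends verbatim to $\bY^2=\mds 1+\bX^2$) to conclude directly that any nc measure can be replaced by one of type $(m,1)$ with a single size $2$ atom $(X,Y)$. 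Then $M=\mc M_2-\xi\mc M_2^{(X,Y)}$ is automatically a sum of commutative rank-one matrices, hence psd, RG, cm, with a representing measure, and the form \eqref{(X,Y)-form} is read off from the rank $4$ column relations of $\mc M_2^{(X,Y)}$ via \cite[Theorem 3.1]{BZ18}. So the consolidation you are trying to prove from scratch is exactly the content of \cite[Proposition 7.3]{BZ18}; you should cite it rather than re-derive it.

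Finally, your type bound argument is not the paper's: the upper bound $m\leq 5$ is not ``$\Rank M$ nodes'' but the Curto--Fialkow fact that a cm moment matrix satisfying $\bY^2=\mds 1+\bX^2$ admits a $(\leq 5)$-atomic measure, and the lower bound $m\geq 2$ is because $\Rank(\mc M_2^{(X,Y)})=4$ and one commutative atom would leave $\Rank\mc M_2\leq 5$.
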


\begin{proof}
	First we will prove (1).
	In this case $\mc{M}_2$ is of the form
		$$\begin{mpmatrix}
		1 & 0 & 0 & \beta_{X^2} & \beta_{XY} & \beta_{XY} &  1+\beta_{X^2}		\\
		0 & \beta_{X^2} & \beta_{XY} & 0 & 0 & 0 & 0 					\\
		0& \beta_{XY} & 1+\beta_{X^2} & 0 & 0 & 0 & 0			\\
		\beta_{X^2} & 0 & 0 & \beta_{X^4} & \beta_{X^3Y} & \beta_{X^3Y} & 
						\beta_{X^2}+\beta_{X^4}	\\
		\beta_{XY} & 0 & 0 & \beta_{X^3Y} & \beta_{X^2}+\beta_{X^4} & \beta_{XYXY} & \beta_{XY}+\beta_{X^3Y}					\\
	\beta_{XY} & 0 & 0 & \beta_{X^3Y} & \beta_{XYXY} & \beta_{X^2}+\beta_{X^4} & 
		\beta_{XY}+\beta_{X^3Y}				\\
		1+\beta_{X^2} & 0 & 0 & \beta_{X^2}+\beta_{X^4} & \beta_{XY}+\beta_{X^3Y}	 & 
	\beta_{XY}+\beta_{X^3Y}	 & 1+2\beta_{X^2}+\beta_{X^4}\end{mpmatrix}.$$
	We define the matrix function
		\begin{equation}\label{subtract-part}
		B(\alpha,\gamma):=
			\mc{M}_2-\alpha \big(\mc{M}_2^{(\gamma,\sqrt{1+\gamma^2})}+
			\mc{M}_2^{(-\gamma,\sqrt{1+\gamma^2})}+
			\mc{M}_2^{(\gamma,-\sqrt{1+\gamma^2})}+\mc{M}_2^{(-\gamma,-\sqrt{1+\gamma^2})}\big),
		\end{equation}
	which is equal to
		$$
			B(\alpha,\gamma)=\begin{mpmatrix}
					 1-4\alpha & 0 & 0 & \beta_{X^2}-4\alpha \gamma^2 & \beta_{XY} & \beta_{XY} & D	\\								0 & \beta_{X^2}-4\alpha \gamma^2 & \beta_{XY} & 0 & 0 & 0 & 0 					\\
					0& \beta_{XY} & D & 0 & 0 & 0 & 0				\\
				\beta_{X^2}-4\alpha \gamma^2 & 0 & 0 & \beta_{X^4}-4\alpha\gamma^4 & \beta_{X^3Y} & \beta_{X^3Y} &  C
						\\
					\beta_{XY} & 0 & 0 & \beta_{X^3Y} & C & E & \beta_{XY}+\beta_{X^3Y}				\\
					\beta_{XY} & 0 & 0 & \beta_{X^3Y} & E &  
						C & \beta_{XY}+\beta_{X^3Y}			\\
					D & 0 & 0 & C & \beta_{XY}-\beta_{X^3Y}	 & \beta_{XY}-\beta_{X^3Y}	 & D+C
				\end{mpmatrix},$$
	where 
		$$C=\beta_{X^2}+\beta_{X^4}-4\alpha \gamma^2(1+\gamma^2),\quad 
		D=1+\beta_{X^2}-4\alpha(1+\gamma^2),\quad 
		E=\beta_{XYXY}-4\alpha \gamma^2(1+\gamma^2).$$
	\vspace{0pt}

	\noindent{\textbf{Claim.}} There exist $\alpha_0>0$ and $\gamma_0>0$ such that
	$B(\alpha_0,\gamma_0)$ is psd and satisfies the column relations
		\begin{equation}\label{B-relations}
			a\mds 1+ d\bX^2+e(\bX\bY+\bY\bX)=\mbf{0}, \quad \bY^2=\mds 1+\bX^2
		\end{equation}
	for some $a,d,e\in \RR$ which are not all zero. 
	Let $\beta_w^{(\alpha_0,\gamma_0)}$ be the moments of $B(\alpha_0,\gamma_0)$.
	Then:
		\begin{equation}\label{B-moments}
			\beta^{(\alpha_0,\gamma_0)}_{X}=\beta^{(\alpha_0,\gamma_0)}_Y=
			\beta^{(\alpha_0,\gamma_0)}_{X^3}=\beta^{(\alpha_0,\gamma_0)}_{X^2Y}=		
			\beta^{(\alpha_0,\gamma_0)}_{XY^2}=\beta^{(\alpha_0,\gamma_0)}_{Y^3}=0.
		\end{equation}
	\vspace{0pt}

	Since
		$$\det\big([B(\alpha,\gamma)]_{\{ \bX, \bY \}}\big)=
		16a^2(1+\gamma^2) \alpha^2+(-4\gamma^2-(4+2\gamma^2)\beta_{X^2})\alpha+
		(\beta_{X^2}^2+\beta_{X^2}-\beta_{XY}^2)
		$$
	is quadratic in $\alpha$, we have that the equation $\det\big([B(\alpha,\gamma)]_{\{ \bX, \bY \}}\big)=0$
	has solutions
		$$
		\alpha_{1,2}=\frac{\gamma^2+\beta_{X^2}+2\gamma^2\beta_{X^2}\pm
		\sqrt{(\gamma^2-\beta_{X^2})^2+ 4\gamma^2\beta_{XY}^2(1+\gamma^2)}}
		{8\gamma^2(1+\gamma^2)}.
		$$
	Since
		$$\det\big([B(\alpha,\gamma)]_{\{ \bX\bY, \bY\bX \}}\big)=
		8\gamma^2(1+\gamma^2)(\beta_{XYXY}-\beta_{X^4}-\beta_{X^2})\alpha-
		(\beta_{XYXY}+\beta_{X^4}+\beta_{X^2})
		(\beta_{XYXY}-\beta_{X^4}-\beta_{X^2})
		$$
	is linear in $\alpha$ 
	and $[\mc{M}_2]_{\{ \bX\bY, \bY\bX \}}$ is positive definite, this implies that
		$$0<\det\big([\mc{M}_2]_{\{ \bX\bY, \bY\bX \}}\big)=
		-(\beta_{XYXY}+\beta_{X^4}+\beta_{X^2})
		(\beta_{XYXY}-\beta_{X^4}-\beta_{X^2})$$
	and in particular $\beta_{XYXY}-\beta_{X^4}-\beta_{X^2}\neq 0$,
	the equation $\det\big([B(\alpha,\gamma)]_{\{ \bX\bY, \bY\bX \}}\big)=0$
	has a solution
		$$\alpha_3=\frac{\beta_{XYXY}+\beta_{X^2}+\beta_{X^4}}{8\gamma^2(1+\gamma^2)}.$$
%
	\vspace{0pt}	

	\noindent \textbf{Subclaim.}
	For $\gamma$ big enough it is true that $\alpha_3<\min\big(\alpha_{1},\alpha_2,\frac{1}{4}\big)$.\\
	
	We separate two cases: $\beta_{XY}=0$ and $\beta_{XY}\neq 0$.\\
	
	\noindent \textbf{Case 1: }$\beta_{XY}=0$.\\
	
	For $\gamma>0$ such that $\gamma^2\geq \beta_{X^2}$, $\alpha_1$ and $\alpha_2$ are equal to
	$$\alpha_{1}=\frac{2\gamma^2+2\gamma^2\beta_{X^2}}
	{8\gamma^2(1+\gamma^2)}=\frac{1+\beta_{X^2}}{4(1+\gamma^2)},\quad
	\alpha_{2}=\frac{2(1+\gamma^2)\beta_{X^2}}
	{8\gamma^2(1+\gamma^2)}=\frac{\beta_{X^2}}{4\gamma^2}.
	$$
	Since $\alpha_3$ has $\gamma^4$ in the denominator, it is smaller than $\alpha_{1}, \alpha_2$ and $\frac{1}{4}$ for $\gamma$ big enough. \\
	
	\noindent \textbf{Case 2: }$\beta_{XY}\neq 0$.\\

	Calculating the limits of $\alpha_1$ and $\alpha_2$ where $\gamma$ goes to $\infty$ we get
	\begin{align*}
	\lim_{\gamma\to\infty}\alpha_1
	&= \lim_{\gamma\to\infty}\frac{\gamma^2(1+2\beta_{X^2})+
		\gamma^2\sqrt{(1+4\beta_{XY}^2)}}{8\gamma^2(1+\gamma^2)}=
		\lim_{\gamma\to\infty}\frac{(1+2\beta_{X^2})+
		\sqrt{(1+4\beta_{XY}^2)}}{8(1+\gamma^2)},\\
	\lim_{\gamma\to\infty}\alpha_2
	&= \lim_{\gamma\to\infty}\frac{\gamma^2(1+2\beta_{X^2})-
		\gamma^2\sqrt{(1+4\beta_{XY}^2)}}{8\gamma^2(1+\gamma^2)}=
		\lim_{\gamma\to\infty}\frac{(1+2\beta_{X^2})-
		\sqrt{(1+4\beta_{XY}^2)}}{8(1+\gamma^2)}.
	\end{align*}
	Since $[\mc{M}_2]_{\{\bX,\bY\}}$ is positive definite, it follows that
	$\det ([\mc{M}_2]_{\{\bX,\bY\}})>0$, i.e., 
		$$\beta_{XY}^2<(1+\beta_{X^2})\beta_{X^2}.$$
	Hence,
		$$1+4\beta_{XY}^2< 1+4(1+\beta_{X^2})\beta_{X^2}=
		(1+2\beta_{X^2})^2.$$
	Therefore, the numerators in $\alpha_1, \alpha_2$ are strictly positive.
	Therefore for $\gamma$ big enough, $\alpha_3$ is smaller than $\alpha_1, \alpha_2$ and $\frac{1}{4}$, since it has $\gamma^4$ in the denominator. 
	This proves the subclaim.\\

	Let us now fix $\gamma_0$ big enough such that  $\alpha_3$ is smaller than 
	$\alpha_{1},\alpha_2$. Let $\alpha_0>0$ be the smallest positive number such that the rank of 	
	$B(\alpha_0,\gamma_0)$ is smaller than 6. Since $B(0,\gamma_0)$ is psd of rank 6, $B(\alpha_0,\gamma_0)$
	is also psd of rank at most 5. Since in particular,
		$[B(\alpha_0,\gamma_0)]_{\{ \bX\bY, \bY\bX \}}$ is psd, it follows that
	$\alpha_0\leq \alpha_3$.
	From the subclaim we conclude that  $\alpha_0<\min(\alpha_{1},\alpha_2,\frac{1}{4})$.
	Using this and the form of  $B(\alpha_0,\gamma_0)$ we conclude that 
	$B(\alpha_0,\gamma_0)$ satisfies \eqref{B-relations} and \eqref{B-moments} which proves Claim.\\

	The rank of $B(\alpha_0,\gamma_0)$ is at least 4 since the columns $\mds 1, \bX, \bY, \bX\bY$ are linearly independent.
	Indeed, the submatrix 
		$$[B(\alpha_0,\gamma_0)]_{\{\mds 1, \bX, \bY\}}=
		[B(\alpha_0,\gamma_0)]_{\{\mds 1\}}\oplus 
		[B(\alpha_0,\gamma_0)]_{\{\bX, \bY\}}$$ 
	is block diagonal. By the above $\det\big( [B(\alpha_0,\gamma_0)]_{\{\bX, \bY\}}\big)\neq 0$. Since $\alpha_0\leq \alpha_3 < \frac{1}{4}$, 
	$[B(\alpha_0,\gamma_0)]_{\{\mds 1\}}\neq 0$ and the column $\mds 1$ is nonzero. Hence the columns $\mds 1, \bX, \bY$ are linearly independent. 
	Note also that in the full matrix $B(\alpha_0,\gamma_0)$, $\bX\bY$ cannot be a linear combination of
	$\mds 1, \bX, \bY$ 
	since it is not symmetric in rows $\bX\bY$ and $\bY\bX$.

	Now we separate two cases according to the rank of $B(\alpha_0,\gamma_0)$.
	
	\noindent\textbf{Case 1:} $\Rank B(\alpha_0,\gamma_0)=4.$ 
	By the form of $B(\alpha_0,\gamma_0)$ the relations are 
		$$\bX^2=a_1 \mds 1,\quad \bY\bX=a_2 \mds 1 - \bX\bY, \quad \bY^2=(1+a_1)\mds 1$$
	for some $a_1, a_2\in \RR\backslash \{0\}.$
	By \cite[Theorem 3.1 (3)]{BZ18} the measure for the sequence $\widetilde{\beta}^{(\alpha_0,\gamma_0)}_w$ exists 
	and is of type $(0,1)$.\\

	\noindent\textbf{Case 2:} $\Rank B(\alpha_0,\gamma_0)=5.$  	
	By Lemma \ref{linear-trans} there is a transformation of the form \eqref{lin-trans1} which we apply to get
	a moment sequence $\widetilde{\beta}^{(\alpha_0,\gamma_0)}_{w}$ 
	 such that the corresponding moment matrix
	$\widetilde{\mc M}_2$ satisfes the relations \eqref{first-rel} and \eqref{second-rel}. 
	By Lemma \ref{linear-trans-on-zero-moments} in both cases we have that 
		$$\widetilde{\beta}^{(\alpha_0,\gamma_0)}_{X}=
		\widetilde{\beta}^{(\alpha_0,\gamma_0)}_Y=
		\widetilde{\beta}^{(\alpha_0,\gamma_0)}_{X^3}=
		\widetilde{\beta}^{(\alpha_0,\gamma_0)}_{X^2Y}=
		\widetilde{\beta}^{(\alpha_0,\gamma_0)}_{Y^3}=0.$$
	Furthermore, since the rank of $B(\alpha_0,\gamma_0)$ is 5, a measure also exists and is of type $(m_1,1)$ where $m_1\in\{1,2,3\}$
	by \cite[Theorems 6.5, 6.8, 6.11, 6.14]{BZ18}.
	Hence $\beta$ admits a measure of type $(m,1)$, $m\in \NN$. This proves \eqref{M(2)-c3-r6-pt1}.

	It remains to prove \eqref{M(2)-c3-r6-pt2}. Suppose that $\beta$ admits a nc measure. 
	Using Theorem \ref{M(2)-bc3-r6-new1} \eqref{M(2)-c3-r6-pt1} together with  \cite[Proposition 7.3]{BZ18} (note that the result and proof hold in the case of $\bY^2=\mds 1+ \bX^2$ as well), we obtain
		\begin{equation} \label{r6-M(2)-with-rank4-bc4-new22} 
			\mc{M}_2=\sum_{i=1}^m \lambda_i \mc{M}^{(x_i,y_i)}_2 +  \xi \mc{M}^{(X,Y)}_2,
		\end{equation}
	where $(x_i,y_i)\in \RR^2$, $m\in \NN$, $(X,Y)\in (\mathbb{SR}^{2\times 2})^2$,
	$\lambda_i> 0$, $\xi> 0$ and $\sum_{i=1}^m \lambda_i+\xi=1$.
	Therefore 
		$$M:=\ \mc{M}_2-\xi \mc{M}^{(X,Y)}_2,$$
	is a cm moment matrix of rank at most 5 satisfying the relations
		$$\bY^2=\mds 1+\bX^2\quad\text{and}\quad \bX\bY=\bY\bX.$$
	By \cite{Fia14} and references therein, $M$ admits a measure if and only if $M$ is psd, RG and satisfies
	$\Rank M\leq\Card \mathcal V_M$.
	To conclude the proof it only 
	remains to prove that $X, Y$ are of the form (\ref{(X,Y)-form}). Note that $\mc{M}^{(X,Y)}_2$ is a nc moment matrix of
	rank 4. Therefore the columns
	$\{\mds 1,\bX,\bY,\bX\bY\}$ are linearly independent \cite[Corollary 2.3]{BZ18} and hence
		\begin{equation*}
			\bX^2=a_1 \mds 1+ b_1 \bX+ c_1 \bY+d_1 \bX\bY,\quad\text{and}\quad
			\bY^2=a_3 \mds1+ b_3 \bX+ c_3 \bY+d_3 \bX\bY,
		\end{equation*}
	where $a_j,b_j,c_j,d_j\in \RR$ for $j=1, 3$. By \cite[Theorem 3.1 (1)]{BZ18},
	$d_1=d_3=0$. 
	By \cite[Theorem 3.1 (3)]{BZ18},
	$c_1=b_3=0$. Since $\bY^2=\mds 1+\bX^2$ it follows that $b_1=c_3=0$ and $a_3=1+a_1$.
	By  \cite[Theorem 3.1 (4)]{BZ18}, $X$ and $Y$ are of the form 
	 (\ref{(X,Y)-form}).
	
	To prove the result about the type of the measure note that if a cm moment matrix which admits a measure 
	satisfies $\bY^2=\mds 1+\bX^2$, then it admits a measure with at most 5 atoms by
	the results of Curto and Fialkow \cite{CF98-1}, \cite{CF02}, \cite{Fia14} (see also \cite[Theorem 2.7]{BZ18}). 
	On the other hand there must be at least 2 cm atoms in every measure of type $(m,1)$, $m\in \NN$, for $\cM_2$, 
	otherwise $\cM_2$ would be of rank at most 5.
\end{proof} 
%
\begin{proof}[Proof of Theorem \ref{M(2)-bc3-r6-new1-cor}]
	Let us first prove the implication $(\Rightarrow)$. Suppose that $\beta$ admits a measure. 
	By Theorem \ref{M(2)-bc3-r6-new1}, $\mc M_2$ is of the form
		\begin{equation}\label{with-2-times-2-matrices-bc1}
			\mc{M}_2=\sum_{i=1}^m\lambda_i \mc{M}^{(x_i,y_i)}_2+
			\xi\mc{M}^{(X,Y)}_2,
		\end{equation}
	where $m\in \NN$, $(x_i,y_i)\in \RR^2$, $(X,Y)\in (\mathbb{SR}^{2\times 2})^2$,
	$\lambda_i> 0$, $\xi>0$ and $\sum_{i=1}^m \lambda_i+\xi=1$.
	By the form \eqref{(X,Y)-form} of $(X,Y)$ it is easy to check that 
		\begin{equation} \label{r6-form-of-x2-2-bc4-new1-cor} 
			\beta^{(X,Y)}_X=\beta^{(X,Y)}_Y=\beta^{(X,Y)}_{X^3}=
			\beta^{(X,Y)}_{X^2Y}=\beta^{(X,Y)}_{XY^2}=\beta^{(X,Y)}_{Y^3}=0,
		\end{equation}
	where $\beta^{(X,Y)}_{w}$ are the moments of $\mc{M}^{(X,Y)}_2$.
	Using (\ref{with-2-times-2-matrices-bc1}) and (\ref{r6-form-of-x2-2-bc4-new1-cor}), we conclude that
	$\sum_{i=1}^m \lambda_i \mc{M}^{(x_i,y_i)}_2$ and $\xi \mc{M}^{(X,Y)}_2$ are of the forms
			\begin{align}
				\begin{mpmatrix}
					a & \beta_X & \beta_Y & b & c & c &  a+b		\\
					\beta_X & b & c & \beta_{X^3} & \beta_{X^2Y} & \beta_{X^2Y} & \beta_X+\beta_{X^3} 					\\
					\beta_{Y}& c & a+b & \beta_{X^2Y} & \beta_X+\beta_{X^3} & \beta_X+\beta_{X^3} & \beta_Y+\beta_{X^2Y}			\\
					b & \beta_{X^3} & \beta_{X^2Y} & d & e & e & b+d	\\
					c & \beta_{X^2Y} & \beta_X+\beta_{X^3} & e & b+d & b+d & c+e					\\
					c & \beta_{X^2Y} & \beta_X+ \beta_{X^3}& e & b+d & b+d & c+e 					\\
					a+b & \beta_X+\beta_{X^3} & \beta_{Y}+\beta_{X^2Y} & b+d & c+e & c+e & a+2b+d
				\end{mpmatrix}, \label{matrix-1-bc1}\\
				\begin{mpmatrix}
					1-a & 0 & 0 & \beta_{X^2}-b & A_1(c) & A_1(c) &  A_2(a,b)	\\
					0 & \beta_{X^2}-b & A_1(c) & 0 & 0 & 0 & 0 					\\
					0 & A_1(c) & A_2(a,b) & 0 & 0 & 0 & 0			\\
					\beta_{X^2}-b & 0 & 0 & \beta_{X^4}-d & A_3(e) & A_3(e) & A_4(b,d)\\
					A_1(c) & 0 & 0 & A_3(e) & A_4(b,d) & \beta_{XYXY}-(b-d)& 					A_5(c,e)\\
					A_1(c) & 0 & 0 & A_3(e) & \beta_{XYXY}-(b-d) & A_4(b,d) & A_5(c,e)					\\
					A_2(a,b) & 0 & 0 & A_4(b,d) & A_5(c,e) &  A_5(c,e)  & A_6(a,b,d)
				\end{mpmatrix},\label{matrix-2-bc1}
			\end{align}
	where 
	\begin{equation*}
  	\begin{split}
		A_1(c)&= \beta_{XY}-c,\\
		A_3(e) &= \beta_{X^3Y}-e,\\
		A_5(c,e)&= \beta_{XY}+\beta_{X^3Y}-(c+e),
	\end{split}
	\qquad
	\begin{split}
		A_2(a,b)    &= 1+\beta_{X^2}-(a+b),\\ 
		A_4(b,d)    &=\beta_{X^2}+\beta_{X^4}-(b+d),\\
		A_6(a,b,d) &=1+2\beta_{X^2}+\beta_{X^4}-(a+2b+d),
	\end{split}
	\end{equation*}
	for some $a,b,c,d,e\in \RR$, 
	and observe that the matrix (\ref{matrix-1-bc1}) is $L(a,b,c,d,e)$
	and (\ref{matrix-2-bc1}) is $\mc{M}_2-L(a,b,c,d,e)$.
	Since $L(a,b,c,d,e)$ is a cm moment matrix which admits a measure, 
	conditions (\ref{point1-bc1-r6}) and (\ref{point4-bc1-r6}) of Theorem \ref{M(2)-bc3-r6-new1-cor} follow from
	\cite{Fia14} and references therein. Since $\mc{M}_2-L(a,b,c,d,e)$ is a nc moment matrix which admits a measure,
	(\ref{point2-bc1-r6}) and 
	(\ref{point3-bc1-r6}) of Theorem \ref{M(2)-bc3-r6-new1-cor} are true by Proposition \ref{Mn-psd} and 
	Corollary \ref{lin-ind-of-4-col} above. This proves the implication $(\Rightarrow)$.

	It remains to prove the implication $(\Leftarrow)$. We have to prove that conditions 
	(\ref{point1-bc1-r6})-(\ref{point4-bc1-r6}) imply that there is a measure for $\mc M_2$.
	Since $L(a,b,c,d,e)$ is a cm moment matrix that satisfies 
	(\ref{point1-bc1-r6}) and (\ref{point4-bc1-r6}), 
	it admits a measure by \cite{Fia14} and references therein. 
	Now note that  $M:=\mc{M}_2-L(a,b,c,d,e)$ is a nc moment matrix of the form \eqref{matrix-2-bc1} satisfying
		\begin{equation}\label{zero-moments-M} 
			\beta^{M}_X=\beta^{M}_Y=\beta^{M}_{X^3}=
			\beta^{M}_{X^2Y}=\beta^{M}_{XY^2}=\beta^{M}_{Y^3}=0,
		\end{equation}
	where $\beta^M_w$ denote the moments of $M$. 
	It remains to prove that $M$ admits a measure.
	By (\ref{point2-bc1-r6}), $M$ is psd, and from (\ref{point3-bc1-r6}), $M$ is of rank at least 4 with
	linearly independent columns $\mds 1, \bX, \bY, \bX\bY$. Since $M$ satisfies the relation
	$\bY^2=\mds 1+\bX^2$, it can be of rank at most 6. We separate three possibilities.\\
 
	\noindent\textbf{Case 1:}  $\Rank M=4$. From the form of $M$, we see that it must additionally satisfy  
	  \begin{equation*}
	   	X^2=a_1\mds 1, \quad \text{and} \quad
	   	\bX\bY+\bY\bX=a_2\mds 1,		
	  \end{equation*}
	for some $a_1,a_2\in \RR.$ Since
		$M$ is also psd, there exist a measure for $\beta$ by 
	\cite[Theorem 3.1 (3)]{BZ18}.\\
\\
	\noindent\textbf{Case 2:} $\Rank M=5$. By the form of $M$ and \eqref{point2-bc1-r6}, we have the additional relation
	  \begin{equation}\label{other-re}
	    a\mds 1+ d\bX^2+e(\bX\bY+\bY\bX)=\mbf{0}
	  \end{equation}
	for some $a,d,e\in \RR.$
	Since $M$ is psd and RG (since there are only quadratic column relations), 
	Lemma \ref{linear-trans} states that there is a transformation of the form \eqref{lin-trans1} which we may apply to get
	a moment sequence $\widetilde{\beta}_w$ with a moment matrix $\widetilde M$ satisfying the relations \eqref{first-rel} and \eqref{second-rel}. 
	By Lemma \ref{linear-trans-on-zero-moments} we have that 
		$$\widetilde{\beta}_{X}=
		\widetilde{\beta}_Y=
		\widetilde{\beta}_{X^3}=
		\widetilde{\beta}_{X^2Y}=
		\widetilde{\beta}_{Y^3}=0.$$
	Hence the measure for $\widetilde{\beta}_w$ exists by \cite[Theorems 6.5, 6.8, 6.11, 6.14]{BZ18}.\\
	\\
	\noindent\textbf{Case 3:} $\Rank M=6$.
	Since $M$ is psd, RG (since the only relation is $\bY^2=\mds 1+\bX^2$) and
	satisfies \eqref{zero-moments-M}, it admits a measure by Theorem \ref{M(2)-bc3-r6-new1} \eqref{M(2)-c3-r6-pt1}.\\
	
	The type of representing measure, as well as the sufficiency of \eqref{suff-cond-meas} can be inferred from Theorem \ref{M(2)-bc3-r6-new1}.
\end{proof}

Theorem \ref{M(2)-bc3-r6-new1-cor} (along with the others from \cite{BZ18}) provides with a new computational method for testing the existence of a measure. While searching for a flat extension from $\mc{M}_{2}$ to $\mc{M}_{3}$ is reasonable, this approach quickly becomes intractable if $\mc{M}_{2}$ admits positive extensions $\mc{M}_{k}$, for a large $k$, which then admits a flat extension to $\mc{M}_{k+1}$. Comparatively, checking the LMI's from Theorem \ref{M(2)-bc3-r6-new1-cor} always maintains the same level of computational complexity.
In the following example we present two psd moment matrices $\mc{M}_2$ satisfying $\bY^2=\mds 1+\bX^2$,
one which admits a representing measure and the other which does not. The proof is by the use of Theorem \ref{M(2)-bc3-r6-new1-cor}, with the computations easily checked in \textit{Mathematica}.

\begin{example}
For the moment matrix 
	$$\mc{M}_2=
\begin{pmatrix}
 1 & 0 & 0 & \frac{1}{2} & 0 & 0 & \frac{3}{2} \\
 0 & \frac{1}{2} & 0 & 0 & 0 & 0 & 0 \\
 0 & 0 & \frac{3}{2} & 0 & 0 & 0 & 0 \\
 \frac{1}{2} & 0 & 0 & 1 & 0 & 0 & \frac{3}{2} \\
 0 & 0 & 0 & 0 & \frac{3}{2} & 0 & 0 \\
 0 & 0 & 0 & 0 & 0 & \frac{3}{2} & 0 \\
 \frac{3}{2} & 0 & 0 & \frac{3}{2} & 0 & 0 & 3 \\
\end{pmatrix}$$
we proved in \cite[Example 8.16]{BZ18} that it admits a representing measure (but not a flat extension). We will check this fact also by the use of
Theorem \ref{M(2)-bc3-r6-new1-cor}. Using \textit{Mathematica} we get $a=0.75$, $b=c=d=e=0$ as a feasible solution of 
both LMI's from \eqref{point1-bc1-r6} and \eqref{point2-bc1-r6}. We check that the condition \eqref{point3-bc1-r6} of Theorem 
\ref{M(2)-bc3-r6-new1-cor} is also met, i.e., the eigenvalues are $1.5, 0.75, 0.5, 0.25$. The moment matrix $L(0.75,0,0,0,0)$
satisfies $\bX=\bX^2=\bX\bY=\bY\bX=\mbf 0$ and $\bY^2=\mds 1$, hence it is of rank 2. The corresponding variety is $\{(0,1),(0,-1)\}$,
so also the condition \eqref{point4-bc1-r6} of Theorem 
\ref{M(2)-bc3-r6-new1-cor} is satisfied. Thus $M$ indeed admits a measure by  Theorem 
\ref{M(2)-bc3-r6-new1-cor}.

For the moment matrix 
$$\mc{M}_2=
\left(
\begin{array}{ccccccc}
 1 & 0 & \frac{4}{15} & \frac{2}{3} & -\frac{32}{33} & -\frac{32}{33} & \frac{5}{3} \\[1.5mm]
 0 & \frac{2}{3} & -\frac{32}{33} & \frac{4}{15} & -\frac{8}{27} & -\frac{8}{27} & \frac{4}{15} \\[1.5mm]
 \frac{4}{15} & -\frac{32}{33} & \frac{5}{3} & -\frac{8}{27} & \frac{4}{15} & \frac{4}{15} & -\frac{4}{135} \\[1.5mm]
 \frac{2}{3} & \frac{4}{15} & -\frac{8}{27} & \frac{2}{3} & -\frac{8}{9} & -\frac{8}{9} & \frac{4}{3} \\[1.5mm]
 -\frac{32}{33} & -\frac{8}{27} & \frac{4}{15} & -\frac{8}{9} & \frac{4}{3} & \frac{10}{9} & -\frac{184}{99} \\[1.5mm]
 -\frac{32}{33} & -\frac{8}{27} & \frac{4}{15} & -\frac{8}{9} & \frac{10}{9} & \frac{4}{3} & -\frac{184}{99} \\[1.5mm]
 \frac{5}{3} & \frac{4}{15} & -\frac{4}{135} & \frac{4}{3} & -\frac{184}{99} & -\frac{184}{99} & 3 \\
\end{array}\right)$$
we check with \textit{Mathematica} that the eigenvalues are nonnegative, i.e., 
	$6.92,2.35,0.22,0.11,0.039,0.014,0.$
Clearly we have that $\bY^2=\mds{1}+\bX^2$. Using \textit{Mathematica} we check that the LMI's from Theorem \ref{M(2)-bc3-r6-new1-cor} \eqref{point1-bc1-r6}, \eqref{point2-bc1-r6} are not simultaneously feasible. Hence $\mc{M}_2$ does not admit a representing measure.
\end{example}


\section{$\mc M_2$ of rank 6 with relation $\bY^2=\mds 1$}
\label{S2}

The main result of this section, see Theorem \ref{main-res-2} below, 
is that moment matrices $\mc M_2$ generated by the 
atoms $(X,Y)$ of size 3 satisfying $Y^2=I_3$ can always be represented with atoms of size at most 2. Moreover,  if we consider a single atom of size 3, then a single atom of size 2 suffices. 

  \begin{theorem}\label{main-res-2}
    Let $\beta$ be a moment sequence with a nc moment matrix $\cM_2$ satisfying the column relation $\bY^2=\mds 1$.
    Then the following are equivalent:
    \begin{enumerate}
	\item\label{pt1} $\mc M_2$ admits a measure of type $(m_1,m_2,m_3)$, $m_1, m_2,m_3\in \NN\cup\{0\}$.
	\item\label{pt2} $\mc M_2$ admits a measure of type $(m_1,m_2)$, $m_1, m_2\in \NN \cup\{0\}$.
    \end{enumerate}
	Moreover, if $m_3=1$ in \eqref{pt1} the $m_2=1$ in \eqref{pt2}.
  \end{theorem}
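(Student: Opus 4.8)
The implication \eqref{pt2}$\Rightarrow$\eqref{pt1} is immediate, since a measure of type $(m_1,m_2)$ is in particular a measure of type $(m_1,m_2,0)$. The plan is therefore to prove \eqref{pt1}$\Rightarrow$\eqref{pt2} by a local replacement: I would show that a single size $3$ atom can be re-expressed, at the level of the order $2$ moment matrix, through atoms of size at most $2$, and then carry out this replacement on every size $3$ atom of a given type $(m_1,m_2,m_3)$ measure. Concretely, the core statement to isolate is that for $(X,Y)\in(\mathbb{SR}^{3\times 3})^2$ with $Y^2=I_3$ the matrix $\mc M_2^{(X,Y)}$ admits a representing measure consisting of one size $2$ atom together with finitely many size $1$ atoms. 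Summing these replacements over the atoms of the measure in \eqref{pt1} yields a measure of type $(m_1',m_2')$, proving \eqref{pt2}, and bookkeeping of how many size $2$ atoms each replacement produces will give the final assertion.

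To prove the core statement I would first use the invariance of the normalized trace under orthogonal conjugation of the atoms to diagonalize the involution $Y$, so that $Y=\mathrm{diag}(\epsilon_1,\epsilon_2,\epsilon_3)$ with $\epsilon_i\in\{\pm1\}$; after possibly negating $Y$ (and compensating by negating the second coordinate of the constructed atoms) I may assume the eigenvalue $+1$ has multiplicity at least $2$. The degenerate case $Y=\pm I_3$ is commutative, and diagonalizing $X$ exhibits the atom as three size $1$ atoms on the line $y=\pm1$, which is already of size at most $2$. The essential case is $Y=\mathrm{diag}(1,1,-1)$. Writing $X=\begin{pmatrix} A & b\\ b^{t}& c\end{pmatrix}$ with $A\in\SSS\RR^{2\times 2}$, $b\in\RR^{2}$ and $c\in\RR$, a direct computation shows that every moment of order at most $4$ depends on $(A,b,c)$ only through $\tr(A^{k})$, the scalars $c^{k}$, and the single quantity $\lVert b\rVert^{2}$; in particular the only genuinely noncommutative datum is the nonnegative defect
\begin{equation*}
\beta_{X^2}-\beta_{XYXY}=\tfrac{4}{3}\lVert b\rVert^{2}\geq 0 .
\end{equation*}
Since the smaller eigenspace of $Y$ is one dimensional, the coupling $b$ is effectively of rank one, which is exactly what a single size $2$ atom can reproduce.

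For the construction I would take the replacement atom to be $\big(A',\mathrm{diag}(1,-1)\big)$ with $A'=\left(\begin{smallmatrix} p & s\\ s& r\end{smallmatrix}\right)$ and weight $\lambda$, and first fix $s$ and $\lambda$ through the relation $2\lambda s^{2}=\tfrac{4}{3}\lVert b\rVert^{2}$, so that the noncommutative defect is matched exactly (size $1$ atoms contribute nothing to $\beta_{X^2}-\beta_{XYXY}$). What then remains is a purely commutative truncated moment problem on the variety $\{y^{2}=1\}$, i.e.\ a univariate problem in $X$ on the two lines $y=\pm1$, which I would solve with finitely many size $1$ atoms $(x_j,\pm1)$ by the reduction to the classical univariate moment problem developed in Section \ref{S3}. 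The main obstacle is that the off-diagonal entry $s$ also perturbs several commutative moments, notably $\beta_{X^3}$, $\beta_{X^4}$ and $\beta_{X^3Y}$, so the parameters $p,r,\lambda$ and the size $1$ atoms cannot be chosen one after another but must satisfy a coupled system; setting up this system, and then verifying positive semidefiniteness, recursive generation and the rank-to-cardinality condition for the residual commutative matrix so that Curto--Fialkow applies, is where the real work lies.

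Finally, for the sharpening, I would observe that the construction inserts exactly one size $2$ atom per size $3$ atom: the unique noncommutative defect forces at least one size $2$ atom whenever $b\neq 0$ (the genuinely noncommutative case, in which $Y$ has both eigenvalues and $X$ shares no eigenvector with $Y$), and never more, since a single $2\times 2$ off-diagonal already absorbs the rank-one coupling. Hence if the measure in \eqref{pt1} has $m_3=1$, the reduction produces a measure in which the size $2$ atoms number exactly one, together with the size $1$ atoms produced alongside, giving a measure of type $(m_1',1)$ as claimed.
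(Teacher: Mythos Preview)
Your outline is correct in spirit and matches the paper's overall strategy: reduce to a single size $3$ atom, diagonalize $Y$, dispose of the commutative case $Y=\pm I_3$, and in the essential case $Y=\mathrm{diag}(1,1,-1)$ try to peel off one size $2$ atom so that the remainder is a commutative moment matrix on $\{y^2=1\}$. Where your proposal falls short is precisely where you say ``the real work lies'': you set up unknowns $p,r,s,\lambda$ and state that a coupled system must be solved together with a psd/RG/rank-to-cardinality verification, but you give no mechanism for solving it. The paper's proof shows this step is genuinely delicate and does not follow from soft arguments.

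Two ingredients you are missing make the problem tractable. First, an affine change $\phi(x,y)=(a+x+cy,\,y)$ (Lemma \ref{gen-lemma-one--1}\eqref{Y2=1-pt2}) brings the size $3$ atom to the special form
\[
X=\begin{pmatrix} x_1 & 0 & x_2\\ 0 & 0 & x_3\\ x_2 & x_3 & 0\end{pmatrix},\qquad Y=\mathrm{diag}(1,1,-1),
\]
after which the moment relations \eqref{mom-rel-2}--\eqref{mom-rel-3} hold. Second, and crucially, a kernel argument (Lemma \ref{gen-lemma-one--2}) shows that if $\mc M_2^{(X,Y)}-t\,\mc M_2^{(\widetilde X,\widetilde Y)}$ is ever psd, then the diagonal entries of $\widetilde X=\left(\begin{smallmatrix}a&b\\ b&c\end{smallmatrix}\right)$ are \emph{forced}: $c=0$ and $a=\dfrac{x_1x_2^2}{x_2^2+x_3^2}$. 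This reduces your multi-parameter system to a \emph{one}-parameter family $F(t)=\mc M_2^{(X,Y)}-t\,\mc M_2^{(\widetilde X,\widetilde Y)}$ with $t\in(0,\tfrac23]$, and the paper then verifies by explicit computation of principal minors (with a further dichotomy on the sign of $x_1^2-8(x_2^2+x_3^2)$) that some $t_0$ in this interval makes $F(t_0)$ a psd, RG commutative moment matrix with admissible variety. Without the forcing lemma the parameter space is too large to control, and without the preliminary affine reduction the kernel identities do not take a clean form.

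A smaller point: your appeal to Section \ref{S3} to dispatch the residual commutative problem is misplaced. Section \ref{S3} treats $\mc M_n$ with \emph{two} quadratic column relations (rank $5$ in $\mc M_2$), whereas the residual here has only the single relation $\bY^2=\mds 1$ together with $\bX\bY=\bY\bX$; its solvability is established via the Curto--Fialkow results (psd, RG, rank $\leq$ variety cardinality), not by the two-relation reduction. Finally, your argument for the ``moreover'' is fine once the core replacement is proved, since in the genuinely noncommutative case the construction uses exactly one size $2$ atom.
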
 

The proof is constructive and can be seen as the first step toward proving the following conjecture:

\begin{conjecture}
    Let $\beta$ be a moment sequence with a moment matrix $\cM_2$ satisfying the column relation $\bY^2=\mds 1$.
    Then the following are equivalent:
  \begin{enumerate}
	\item $\mc M_2$ admits a measure.
	\item $\mc M_2$ admits a measure of type $(m_1,m_2)$, $m_1, m_2\in \NN$.
	\item $\mc M_2$ admits a measure of type $(m,1)$, $m\in \NN$.
  \end{enumerate}
\end{conjecture}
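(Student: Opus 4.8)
The plan is to prove the nontrivial implication $\eqref{pt1}\Rightarrow\eqref{pt2}$ by showing that a single size-$3$ atom can always be traded for atoms of size at most $2$, exactly one of which has size $2$; the reverse implication $\eqref{pt2}\Rightarrow\eqref{pt1}$ is immediate, since a measure whose atoms have size at most $2$ in particular has atoms of size at most $3$. As the moments are additive over the atoms of a measure, it suffices to prove the key lemma: for $(X,Y)\in(\mathbb{SR}^{3\times 3})^2$ with $Y^2=I_3$, the matrix $\mc M_2^{(X,Y)}$ is a convex combination of matrices $\mc M_2^{(A_j,B_j)}$ with each $(A_j,B_j)$ of size at most $2$ and exactly one of size $2$. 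Replacing each size-$3$ atom in a type-$(m_1,m_2,m_3)$ measure by this decomposition yields a measure with atoms of size at most $2$, and when the original measure carries a single size-$3$ atom it contributes exactly one size-$2$ atom, which gives the ``moreover'' clause. Note that by Theorem \ref{support lemma}\eqref{point-1-support} the relation $\bY^2=\mds 1$ forces $B_j^2=I$ on every atom, so all atoms we produce automatically respect the column relation.

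First I would normalize $Y$. Since the normalized trace is invariant under simultaneous orthogonal conjugation $(X,Y)\mapsto(UXU^t,UYU^t)$, and $Y^2=I_3$ with $Y$ symmetric forces the eigenvalues of $Y$ into $\{\pm1\}$, I may assume $Y=\mathrm{diag}(\varepsilon_1,\varepsilon_2,\varepsilon_3)$ with $\varepsilon_i\in\{\pm1\}$. Applying the affine transformation $\phi(X,Y)=(X,-Y)$ of Proposition \ref{linear transform invariance-nc} (which preserves atom sizes and, via the parity identity $w(X,-Y)=(-1)^{\deg_Y w}w(X,Y)$, converts a size-$\le 2$ decomposition of $(X,-Y)$ into one of $(X,Y)$), I reduce to the case where $Y$ has at least two $+1$ eigenvalues, i.e. $Y=I_3$ or $Y=\mathrm{diag}(1,1,-1)$. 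In the scalar case $Y=I_3$ the column relation gives $\bY=\mds 1$, so every atom satisfies $Y_i=I$ and the problem is univariate in $X$; diagonalizing $X=\mathrm{diag}(\xi_1,\xi_2,\xi_3)$ and splitting off a $2\times 2$ and a $1\times 1$ block gives the direct-sum identity $\mc M_2^{(X,I_3)}=\tfrac23\mc M_2^{(\mathrm{diag}(\xi_1,\xi_2),I_2)}+\tfrac13\mc M_2^{(\xi_3,1)}$, which is exactly one size-$2$ plus one size-$1$ atom.

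The crux is the case $Y=\mathrm{diag}(1,1,-1)$. Using the residual $O(2)$ acting on the $(+1)$-eigenspace $\langle e_1,e_2\rangle$ I would put $X$ into the normal form with diagonal $(1,2)$-block,
\begin{equation*}
  X=\begin{pmatrix} p_1 & 0 & v_1\\ 0 & p_2 & v_2\\ v_1 & v_2 & c\end{pmatrix},\qquad Y=\mathrm{diag}(1,1,-1),
\end{equation*}
and observe (using the sign-flip conjugations $e_i\mapsto -e_i$, which preserve $Y$) that every relevant moment of $\mc M_2^{(X,Y)}$ is a function of only the five invariants $p_1,p_2,c,v_1^2,v_2^2$; for instance $\Tr(X^2Y)=\tfrac13(p_1^2+p_2^2-c^2)$ and $\Tr(X^3)=\tfrac13\big(p_1^3+p_2^3+c^3+3v_1^2(p_1+c)+3v_2^2(p_2+c)\big)$. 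If $v_1=0$ or $v_2=0$ (in particular whenever $p_1=p_2$, after a further rotation) the matrix $X$ block-diagonalizes into a $2\times 2$ block on $\langle e_i,e_3\rangle$, on which $Y=\mathrm{diag}(1,-1)$, together with a $1\times 1$ block, and the decomposition into one size-$2$ and one size-$1$ atom is again a direct sum. In the generic subcase $v_1v_2\neq 0$, $p_1\neq p_2$, the target lies on a genuinely $5$-dimensional moment variety, and I would match it by the ansatz consisting of one size-$2$ atom $\big(\begin{smallmatrix} a & b\\ b& d\end{smallmatrix}\big)$ carrying $B=\mathrm{diag}(1,-1)$ together with size-$1$ atoms placed on the two lines $y=\pm 1$. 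Writing out the moment equations (the size-$2$ contributions depend only on $a,d,b^2$) reduces the problem to a small polynomial system in the atom positions and the weights; for example the combination $\beta_{X^2}-\beta_{XYXY}$ already pins down $\lambda b^2=\tfrac23(v_1^2+v_2^2)$, where $\lambda$ is the weight of the size-$2$ atom.

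The step I expect to be the main obstacle is solving this system while keeping all data admissible, namely $b^2\ge 0$ and all weights nonnegative and summing to $1$, uniformly over the generic subcase. The genuine cross term $v_1^2v_2^2$ appearing in $\Tr(X^4)$ shows that the single size-$2$ atom must absorb coupling that the size-$1$ atoms cannot reproduce, so the choice of $a,d,b^2$ and of $\lambda$ is delicate. I would resolve this either by an explicit closed-form solution followed by a direct positivity check, or, more robustly, by a continuity argument: at the decomposable boundary $v_1v_2=0$ the direct-sum decomposition has strictly positive weights, and one shows admissibility is preserved as the parameters are deformed into the interior of the $5$-dimensional variety, the relevant moment map having full rank there. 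Either way the construction outputs exactly one size-$2$ atom, which is precisely what Theorem \ref{main-res-2} and its ``moreover'' clause assert.
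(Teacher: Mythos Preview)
The statement you were asked to prove is a \emph{Conjecture} in the paper; the authors do \emph{not} prove it. What the paper does prove is the weaker Theorem~\ref{main-res-2}, whose hypothesis in item~\eqref{pt1} already restricts to measures of type $(m_1,m_2,m_3)$, i.e.\ atoms of size at most $3$. Your proposal is an attack on that theorem, not on the Conjecture.

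There are two genuine gaps between what you outline and what the Conjecture asserts. First, item~(1) of the Conjecture says only that $\mc M_2$ admits \emph{some} measure, with atoms of arbitrary size; your key lemma handles only size-$3$ atoms, so it cannot by itself yield $(1)\Rightarrow(2)$. (The paper's Remark after the proof of Theorem~\ref{main-res-2} observes that Lemma~\ref{gen-lemma-one--2} holds for all $n$, but the authors explicitly say they could not push the positivity argument beyond $n=3$.) Second, you do not address $(2)\Rightarrow(3)$ at all: reducing a measure with \emph{several} size-$2$ atoms to one with a \emph{single} size-$2$ atom is a separate problem that your decomposition of individual size-$3$ atoms does not touch.

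As for the part of your proposal that overlaps with Theorem~\ref{main-res-2}: the overall shape (normalize $Y$, split off block-diagonal subcases, in the generic case subtract a single size-$2$ atom and verify the remainder is a cm moment matrix with a measure) matches the paper. The paper, however, uses a different normalization of $X$: via Lemma~\ref{gen-lemma-one--1}\eqref{Y2=1-pt2} it applies an \emph{affine} transformation (not just orthogonal conjugation) to force the last two diagonal entries of $X$ to zero, which considerably simplifies the moment expressions. More importantly, the generic case in the paper is not handled by a soft continuity argument: Lemma~\ref{gen-lemma-one--2} pins down the size-$2$ atom uniquely (up to its density~$t$) via a kernel argument, and the proof then checks explicitly, by computing four principal minors of $F(t)$ and a delicate inequality on the coefficients of a cubic in $t$, that some admissible $t\in(0,\tfrac{2}{3}]$ makes $F(t)$ a psd, RG commutative moment matrix. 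Your proposal stops exactly at this step (``either an explicit closed-form solution \dots\ or a continuity argument''), which is the substantive content of the paper's proof.
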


Let $\beta^{(4)}$ be a truncated moment sequence and $\mc M_2$ its moment matrix. The notations 
	$\Delta(\beta^{(4)})$ and $\Delta(\mc M_2)$ will both denote the difference
	$$\Delta(\beta^{(4)})=\Delta(\mc M_2):=\beta_{X^2Y^2}-\beta_{XYXY},$$
which will be important in the analysis below.

To prove Theorem \ref{main-res-2} we first have to understand the form of moment matrices 
$\mc M_2^{(X,Y)}$ with $(X,Y)\in (\SSS\RR^{2\times 2})^2$ and $Y^2=I_2$. We illustrate this in the next lemma.


\begin{lemma}\label{2-by-2-atom}
	Let $(X,Y)\in (\SSS\RR^{2\times 2})^2$ be a pair of symmetric matrices of size 2 with $Y^2=I_2$ 
	and $\Delta(\mc M_2^{(X,Y)})\neq 0$. Then there is 
	$\widetilde X:=\begin{mpmatrix} a & b \\ b & c\end{mpmatrix}\in \SSS\RR^{2\times 2}$,
	such that
	  \begin{equation}\label{diag-Y}
	    \mc M_2^{(X,Y)}=\mc M_2^{(\widetilde X,\widetilde Y)},
	  \end{equation}
	where $\widetilde Y=\begin{mpmatrix} 1 & 0 \\ 0& -1\end{mpmatrix}.$
	Moreover, $\mc M_2^{(\widetilde X,\widetilde Y)}$ is equal to 
	$$\begin{mpmatrix}
 1 & \frac{1}{2} (a+c) & 0 & \frac{1}{2} \left(a^2+2 b^2+c^2\right) & \frac{1}{2} (a-c) & \frac{1}{2} (a-c) & 1 \\
 \frac{1}{2} (a+c) & C_4(a,b,c) & \frac{1}{2} (a-c) & C_3(a,b,c) & \frac{1}{2} (a-c) (a+c) & \frac{1}{2} (a-c) (a+c) &
   \frac{1}{2} (a+c) \\
 0 & \frac{1}{2} (a-c) & 1 & \frac{1}{2} (a-c) (a+c) & \frac{1}{2} (a+c) & \frac{1}{2} (a+c) & 0 \\
 C_4(a,b,c) &C_3(a,b,c) & \frac{1}{2} (a-c) (a+c) & C_1(a,b,c)&
   C_2(a,b,c) & C_2(a,b,c) & C_4(a,b,c) \\
 \frac{1}{2} (a-c) & \frac{1}{2} (a-c) (a+c) & \frac{1}{2} (a+c) & C_2(a,b,c) & C_4(a,b,c) & 
	C_5(a,b,c) & \frac{1}{2} (a-c) \\
 \frac{1}{2} (a-c) & \frac{1}{2} (a-c) (a+c) & \frac{1}{2} (a+c) & C_2(a,b,c) & C_5(a,b,c) & 
	C_4(a,b,c) & \frac{1}{2} (a-c) \\
 1 & \frac{1}{2} (a+c) & 0 & C_4(a,b,c) & \frac{1}{2} (a-c) & \frac{1}{2} (a-c) & 1
	\end{mpmatrix},$$
	where
	\begin{align*} 
		2C_1(a,b,c) &= 
		a^4+4 a^2 b^2+4 a b^2 c+2 b^4+4 b^2 c^2+c^4,\\
		2C_2(a,b,c) &= 
		(a-c) \left(a^2+a c+b^2+c^2\right),\\
		2C_3(a,b,c) &= 
		a^3+3 a b^2+3 b^2 c+c^3,\\
		2C_4(a,b,c) &= 
		a^2+2 b^2+c^2,\\
		2C_5(a,b,c) &= 
		a^2-2 b^2+c^2.\\
	\end{align*}
	In particular, we have that $\Delta(\mc M_2^{(\widetilde X,\widetilde Y)})=2b^2.$
\end{lemma}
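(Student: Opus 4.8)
The plan is to diagonalize $Y$ by an orthogonal change of basis so that it takes the fixed form $\widetilde Y=\mathrm{diag}(1,-1)$, and then to read off all entries of $\mc M_2^{(\widetilde X,\widetilde Y)}$ by a direct matrix computation. The only conceptual input is the orthogonal invariance of the normalized trace recorded in the Remark of Subsection~\ref{BQTMP}: for any orthogonal $U$ one has $\Tr(w(UXU^t,UYU^t))=\Tr(U\,w(X,Y)\,U^t)=\Tr(w(X,Y))$ for every word $w$, so conjugating the atom by $U$ leaves $\mc M_2^{(X,Y)}$ unchanged.

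First I would use the hypothesis $\Delta(\mc M_2^{(X,Y)})\neq 0$ to pin down the spectrum of $Y$. Since $Y=Y^t$ and $Y^2=I_2$, its eigenvalues lie in $\{+1,-1\}$, so either $Y=\pm I_2$ or $Y$ has the two distinct eigenvalues $+1,-1$. In the former case $Y$ is central, whence $XY=YX$ and $X^2Y^2=XYXY$ as matrices; taking traces gives $\beta_{X^2Y^2}=\beta_{XYXY}$, i.e.\ $\Delta=0$, contradicting the assumption. Therefore $Y$ has eigenvalues $+1$ and $-1$, and by the spectral theorem there is an orthogonal $U\in\RR^{2\times 2}$ with $UYU^t=\widetilde Y$. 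Putting $\widetilde X:=UXU^t$, which is symmetric and hence of the form $\begin{pmatrix} a & b\\ b & c\end{pmatrix}$, the invariance above yields $\mc M_2^{(X,Y)}=\mc M_2^{(\widetilde X,\widetilde Y)}$, which is \eqref{diag-Y}.

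It then remains to evaluate the entries $\beta_{w_1^\ast w_2}=\Tr\big(w_1(\widetilde X,\widetilde Y)^t\, w_2(\widetilde X,\widetilde Y)\big)$ for $|w_1|,|w_2|\leq 2$. Here I would simply tabulate the elementary products $\widetilde X\widetilde Y$, $\widetilde Y\widetilde X$, $\widetilde X^2$ and their degree-$\leq 4$ combinations, use $\widetilde Y^2=I_2$ and $\Tr(A)=\tfrac12(A_{11}+A_{22})$, and note that the cyclic and reversal symmetries \eqref{cyclic-cond} collapse the list to the handful of distinct polynomials $C_1,\dots,C_5$ appearing in the statement. The particular formula is then immediate: $\beta_{X^2Y^2}=\Tr(\widetilde X^2)=C_4(a,b,c)=\tfrac12(a^2+2b^2+c^2)$ while $\beta_{XYXY}=\Tr\big((\widetilde X\widetilde Y)^2\big)=C_5(a,b,c)=\tfrac12(a^2-2b^2+c^2)$, so $\Delta(\mc M_2^{(\widetilde X,\widetilde Y)})=C_4-C_5=2b^2$. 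The substantive step is the spectral reduction licensed by $\Delta\neq 0$; everything after \eqref{diag-Y} is routine bookkeeping and presents no real obstacle.
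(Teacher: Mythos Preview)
Your proof is correct and follows essentially the same route as the paper: you use $\Delta\neq 0$ to rule out $Y=\pm I_2$, orthogonally diagonalize $Y$ to the form $\widetilde Y=\mathrm{diag}(1,-1)$, invoke trace-invariance under simultaneous orthogonal conjugation to obtain \eqref{diag-Y}, and then note that the explicit matrix and the formula $\Delta=2b^2$ are a direct calculation. The paper's proof is identical in structure, merely phrasing the $\Delta\neq 0$ step as ``$X$ and $Y$ do not commute'' and dismissing the computation with ``can be easily checked.''
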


\begin{proof}
	To prove \eqref{diag-Y} note that since $Y^2=I_2$ the eigenvalues of $Y$ are $1$ or $-1$. Since $\Delta(\mc M_2^{(X,Y)})
		\neq 0$, $X$ and $Y$
	do not commute. Hence there is an orthogonal matrix $U\in \RR^{2\times 2}$ such
	that $UYU^t=\begin{mpmatrix} 1 & 0 \\ 0& -1\end{mpmatrix}.$ Taking $\widetilde X=UXU^t$ proves \eqref{diag-Y}.
	The remaining part of the lemma can be easily checked.
\end{proof}

We will prove that for every pair $(X,Y)\in (\mathbb{SR}^{3\times 3})^2$ satisfying $Y^2=1$ we can write
	\begin{equation} \label{momY2=1}
		\mc{M}_2^{(X,Y)}=\sum_{i=1}^m \lambda_i \mc{M}^{(x_i,y_i)}_2 +  t \mc{M}^{(\widetilde X,\widetilde Y)}_2,
	\end{equation}
	where $(x_i,y_i)\in \RR^2$, $m\in \NN$, $(\widetilde X,\widetilde Y)\in (\mathbb{SR}^{2\times 2})^2$ as in 
	Lemma \ref{2-by-2-atom}, $\lambda_i> 0$, $t> 0$ and $\sum_{i=1}^m \lambda_i+t=1$.
	Since $\Delta(\mc{M}^{(x,y)}_2)=0$ for every $(x,y)\in \RR^2$ we must have
		$$\Delta :=\Delta(\mc{M}^{(X,Y)}_2)=t\cdot \Delta(\mc{M}^{(\widetilde X,\widetilde Y)}_2)=t\cdot 2b^2,$$
	where we used Lemma \ref{2-by-2-atom} for the second equality.
	Hence a decomposition of the form \eqref{momY2=1} requires that  $b=\sqrt{\frac{\Delta}{2t}}$ \big(we may WLOG assume $b$ is positive, since only even powers of $b$ appear in $\mc{M}^{(\widetilde X,\widetilde Y)}_2$\big). Notice that if $\Delta=0$, then we are in the commutative setting. So we may assume that $\Delta>0$.

\begin{lemma}\label{2-by-2-atom-new-info} 
	Let $(\widetilde X,\widetilde Y)\in (\mathbb{SR}^{2\times 2})^2$ as in Lemma \ref{2-by-2-atom}, with
	$b=\sqrt{\frac{\Delta}{2t}}$ for some $t>0$.
	We have that
	  $$t\cdot\cM_2^{(\widetilde X,\widetilde Y)}=B_1+B_2\cdot t+B_3\cdot \frac{1}{t},$$
	where
\begin{align*}
	B_1
	&=
\begin{mpmatrix}
 0 & 0 & 0 & \frac{1}{2}\Delta& 0 & 0 & 0 \\
 0 &\frac{1}{2}\Delta & 0 & \frac{3(a+c) }{4}\Delta  & 0 & 0 & 0 \\
 0 & 0 & 0 & 0 & 0 & 0 & 0 \\
\frac{1}{2}\Delta & \frac{3(a+c) }{4}\Delta & 0 &\left(a^2+ac+c^2\right) \Delta&\frac{(a-c) }{4}\Delta& \frac{(a-c) }{4}\Delta&\frac{1}{2}\Delta\\
 0 & 0 & 0 & \frac{(a-c) }{4}\Delta & \frac{1}{2}\Delta & -\frac{1}{2}\Delta & 0 \\
 0 & 0 & 0 & \frac{(a-c) }{4}\Delta & -\frac{1}{2}\Delta & \frac{1}{2}\Delta & 0 \\
 0 & 0 & 0 & \frac{1}{2}\Delta & 0 & 0 & 0 
\end{mpmatrix},\\
	B_2
	&=
\begin{mpmatrix}
1 & \frac{1}{2} (a+c) & 0 & C_{4,2}(a,c) & \frac{1}{2} (a-c) & \frac{1}{2} (a-c) & 1\\
 \frac{1}{2} (a+c) & C_{4,2}(a,c) & \frac{1}{2} (a-c) &C_{3,2}(a,c) & \frac{1}{2} (a-c) (a+c) & \frac{1}{2} (a-c) (a+c) & \frac{1}{2} 
   (a+c) \\
 0 & \frac{1}{2} (a-c) & 1 & \frac{1}{2} (a-c) (a+c) & \frac{1}{2} (a+c) & \frac{1}{2} (a+c) & 0 \\
 C_{4,2}(a,c) & C_{3,2}(a,c)  & \frac{1}{2} (a-c) (a+c) & C_{1,2}(a,c) & C_{2,2}(a,c)
   &C_{2,2}(a,c) & C_{4,2}(a,c)  \\
 \frac{1}{2} (a-c) & \frac{1}{2} (a-c) (a+c) & \frac{1}{2} (a+c) & C_{2,2}(a,c) & C_{4,2}(a,c)& C_{4,2}(a,c) &
   \frac{1}{2} (a-c) \\
 \frac{1}{2} (a-c) & \frac{1}{2} (a-c) (a+c) & \frac{1}{2} (a+c) & C_{2,2}(a,c) & C_{4,2}(a,c) & C_{4,2}(a,c) &
   \frac{1}{2} (a-c) \\
 1 & \frac{1}{2} (a+c) & 0 & C_{4,2}(a,c) & \frac{1}{2} (a-c) & \frac{1}{2} (a-c) & 1 \\
\end{mpmatrix},\\
	B_3
	&=
	\frac{\Delta^2}{4}\cdot E_{44},
\end{align*}
with
	\begin{equation*}
	\begin{split} 
		C_{1,2}(a,c) &= \frac{1}{2} \left( a^4 + c^4 \right) ,\\
		C_{2,2}(a,c) &=  \frac{1}{2} (a-c) \left(a^2+a c+c^2\right),
	\end{split}
	\qquad
	\begin{split}
		C_{3,2}(a,c) &=  \frac{1}{2} (a+c) \left(a^2-a c+c^2\right),\\
		C_{4,2}(a,c) &=  \frac{1}{2} (a^2+c^2),
	\end{split}
	\end{equation*}
and $E_{44}$ is the standard $7\times 7$ coordinate matrix with the only non-trivial entry in position $(4,4)$ being 1. Moreover, $B_2$ and $B_3$ are positive semidefinite.
\end{lemma}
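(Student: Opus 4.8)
The plan is to exploit the structural fact, visible from the explicit form in Lemma \ref{2-by-2-atom}, that $b$ enters every entry of $\cM_2^{(\widetilde X,\widetilde Y)}$ only through even powers. Indeed, scanning $C_1,\dots,C_5$ together with the remaining entries, $b$ appears solely via $b^2$, and through $b^4$ only in the single entry $C_1$ sitting in position $(4,4)$. Consequently each entry can be written uniquely as $p_0(a,c)+p_1(a,c)\,b^2+p_2(a,c)\,b^4$, where the coefficient $p_2$ vanishes at every position except $(4,4)$, at which $p_2=1$ (the $b^4$-coefficient of $C_1$).

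First I would substitute $b^2=\tfrac{\Delta}{2t}$, hence $b^4=\tfrac{\Delta^2}{4t^2}$, into this expansion and multiply through by $t$. Entrywise this gives
\[
t\big(p_0+p_1 b^2+p_2 b^4\big)=p_0\,t+\tfrac{\Delta}{2}\,p_1+\tfrac{\Delta^2}{4}\,\tfrac{p_2}{t},
\]
so reading off the coefficients of $t$, $t^0$ and $t^{-1}$ produces three matrices. The coefficient of $t$ is the matrix of the $p_0$'s, which is precisely $\cM_2^{(\widetilde X,\widetilde Y)}$ evaluated at $b=0$; the constant term is $\tfrac{\Delta}{2}$ times the matrix of the $p_1$'s; and the coefficient of $t^{-1}$ is $\tfrac{\Delta^2}{4}$ times the matrix of the $p_2$'s, which by the previous paragraph is supported only at $(4,4)$ and equals $\tfrac{\Delta^2}{4}E_{44}$. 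I would then confirm entrywise that these three matrices are exactly $B_2$, $B_1$ and $B_3$ as stated. This reduces to the routine identities $C_i|_{b=0}=C_{i,2}$ for $i=1,2,3,4$ together with $C_5|_{b=0}=C_{4,2}$ (which is why the $(5,6)$ entry of $B_2$ reads $C_{4,2}$), and to matching the $b^2$-coefficients: for instance the $b^2$-coefficient of $C_3$ being $\tfrac{3(a+c)}{2}$ yields the $(2,4)$ entry $\tfrac{3(a+c)}{4}\Delta$ of $B_1$, and similarly for the remaining slots.

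It then remains to prove that $B_2$ and $B_3$ are positive semidefinite. For $B_3=\tfrac{\Delta^2}{4}E_{44}$ this is immediate, since $\tfrac{\Delta^2}{4}\geq 0$ and $E_{44}\succeq 0$. For $B_2$ I would use the identification $B_2=\cM_2^{(\widetilde X,\widetilde Y)}\big|_{b=0}$: when $b=0$ the generating pair becomes $\widetilde X=\begin{pmatrix} a & 0\\ 0 & c\end{pmatrix}$, $\widetilde Y=\begin{pmatrix} 1 & 0\\ 0 & -1\end{pmatrix}$, which commute and are simultaneously diagonal. Hence $B_2$ is the commutative moment matrix of the pair $\big(\begin{pmatrix} a & 0\\ 0 & c\end{pmatrix},\begin{pmatrix} 1 & 0\\ 0 & -1\end{pmatrix}\big)$, equivalently the moment matrix of the atomic measure $\tfrac12\delta_{(a,1)}+\tfrac12\delta_{(c,-1)}$ on $\RR^2$, and is therefore positive semidefinite by Proposition \ref{Mn-psd}.

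I do not expect a genuine obstacle here: once the even-power structure of $b$ is recorded, the decomposition is forced and its verification is mechanical bookkeeping. The only conceptual point, and the cleanest part of the argument, is recognizing the specialization $B_2$ as a bona fide commutative moment matrix, which delivers its positivity for free rather than through an eigenvalue or determinant computation.
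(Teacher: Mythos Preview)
Your argument is correct. The decomposition step is essentially what the paper means by ``direct computation,'' only you spell out the mechanism (even powers of $b$ throughout, with $b^4$ appearing solely in the $(4,4)$ slot) rather than leaving it implicit.

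Where your proof genuinely differs is in establishing $B_2\succeq 0$. The paper argues by a limiting principle: since $t\cdot\cM_2^{(\widetilde X,\widetilde Y)}=B_1+tB_2+t^{-1}B_3$ is psd for every $t>0$ and $t^{-1}B_3\to\mathbf 0$ as $t\to\infty$, a negative eigenvalue of $B_2$ would force $t\cdot\cM_2^{(\widetilde X,\widetilde Y)}$ to have one for large $t$, a contradiction. You instead recognize $B_2=\cM_2^{(\widetilde X,\widetilde Y)}\big|_{b=0}$ as the moment matrix of the commuting diagonal pair $\big(\mathrm{diag}(a,c),\mathrm{diag}(1,-1)\big)$, equivalently of the atomic measure $\tfrac12\delta_{(a,1)}+\tfrac12\delta_{(c,-1)}$, and invoke Proposition~\ref{Mn-psd}. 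Your route is more structural and gives a little extra information (an explicit representing measure for $B_2$, hence $\Rank B_2\leq 2$); the paper's route is agnostic to the precise form of $B_2$ and would work for any such $t$-expansion of a psd family. Both are short and valid.
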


\begin{proof}
	The statements about the form of $t\cdot\cM_2^{(\widetilde X,\widetilde Y)}$ can be easily checked by direct
	computation. It is obvious that $B_3$ is psd. 
	It remains to prove the fact that $B_2$ is psd. We know that $t\cdot\cM_2^{(\widetilde X,\widetilde Y)}$ is psd for every $t>0$. 	
	If $B_2$ has a negative eigenvalue, then $t\cdot\cM_2^{(\widetilde X,\widetilde Y)}$ also has a negative eigenvalue for
	$t>0$ big enough. 
	(Note that $\displaystyle \lim_{t\to\infty} B_3\frac{1}{t}=\mbf 0$.)
\end{proof}

The next lemma describes the moments generated by a pait $(X,Y)\in(\SSS\RR^{n\times n})^2$ with $Y^2=I_n$ where the multiplicities of the eigenvalues $1$, $-1$ are
$n-1$, $1$, respectively.

\begin{lemma}\label{gen-lemma-one--1}
	Let $(X,Y)\in (\SSS\RR^{n\times n})^2$, $t\geq 2$, be a pair of symmetric matrices of size $n$ such that $Y^2=I_n$ 
	and the multiplicities of the eigenvalues $1$, $-1$ are $n-1$, $1$, respectively.
	Then: 
	\begin{enumerate}
	  \item\label{Y2=1-pt1} $\mc M_2^{(X,Y)}=\mc M_2^{(\widetilde X,\widetilde Y)}$
		with
		\begin{equation}\label{form-of-atoms}
		  \widetilde X=\left(\begin{array}{cc} D & x \\ x^t & \alpha\end{array}\right),\quad \widetilde Y=\left(\begin{array}{cc} I_{n-1}  & 0 \\ 0 & -1 \end{array}\right),
		\end{equation}
		where $D\in\SSS\RR^{(n-1)\times (n-1)}$ is a diagonal matrix, $x\in \RR^{n-1}$ a vector, $\alpha\in\RR$ a real number, and
	  	  $$\widetilde X=WXW^t, \quad \widetilde Y=WYW^t,$$
		for some orthogonal matrix $W\in \RR^{n\times n}$.
          \item\label{Y2=1-pt2} $\mc M_2^{(X,Y)}$ admits a measure of type $t$ if $\mc M_2^{(\widehat X,\widetilde Y)}$ admits a measure of type $t$ 
		where 	
		\begin{equation*}
		  \widehat X=\left(\begin{array}{ccc} D_0 \oplus 0 & x \\ x^t & 0\end{array}\right) := \al{-}\left(\frac{\alpha+d_{n}}{2}\right)I_n+\widetilde X+\left(\frac{\alpha-d_n}{2}\right)\widetilde Y,
		\end{equation*}
		$\widetilde X, \widetilde Y$ are as in \eqref{Y2=1-pt1}, $d_n$ is the $(n-1)$-th diagonal entry of $D$ from \eqref{form-of-atoms}
		and $D_0$ a diagonal matrix of size $n-2$.
%
	  \item\label{Y2=1-pt3} $\mc M_2^{(\widehat X,\widetilde Y)}$ with $\widehat X$ and $\widetilde Y$ as in \eqref{Y2=1-pt2} is equal to
		$$\begin{mpmatrix}
		  1 & \beta_X & \frac{n-2}{n} & \beta_{X^2} & \beta_X & \beta_X & 1 \\
		  \beta_X & \beta_{X^2} & \beta_X & \beta_{X^3} & \beta_{X^2Y} & \beta_{X^2Y} & \beta_X \\
		  \frac{n-2}{n}& \beta_X & 1 & \beta_{X^2Y} & \beta_X & \beta_X & \frac{n-2}{n} \\
		  \beta_{X^2} & \beta_{X^3} &\beta_{X^2Y}   & \beta_{X^4} & \beta_{X^3Y} & \beta_{X^3Y} & \beta_{X^2}  \\
		\beta_{X} & \beta_{X^2Y} & \beta_{X}  & \beta_{X^3Y} & \beta_{X^2}  & \beta_{XYXY} & \beta_X \\
  		 \beta_{X} & \beta_{X^2Y} & \beta_{X}  & \beta_{X^3Y} & \beta_{XYXY}  & \beta_{X^2} & \beta_X \\
		  1 & \beta_X & \frac{n-2}{n} & \beta_{X^2} & \beta_X & \beta_X & 1 
		\end{mpmatrix},$$
	where
	\begin{equation*} 
	\begin{split}
	  \beta_X &= \frac{1}{n}\tr(D_0),\\
	  \beta_{X^2} &= \frac{1}{n}(\tr(D_0^2)+2x^tx),\\
          \beta_{X^3} &= \frac{1}{n}(\tr(D_0^3)+3\tr(\hat{D}xx^t)),\\
	  \beta_{X^2Y} &= \frac{1}{n}\tr(D_0^2),
	\end{split}
	\qquad
	\begin{split}
	 \beta_{X^3Y} &= \frac{1}{n}(\tr(D_0^3)+\tr(\hat{D}xx^t)),\\
	  \beta_{XYXY} &= \frac{1}{n}(\tr(D_0^2)-2x^tx),\\
          \beta_{X^4} &= \frac{1}{n}(\tr(D_0^4)+4\tr(\hat{D}^2xx^t)+2(x^tx)^2),\\
		&
	\end{split}
	\end{equation*}
	with $\hat{D} =  D_0 \oplus 0$. In particular, we have that
	\begin{align}  \label{mom-rel-2}
	    \beta_{X^2Y} &= \frac{1}{2}\left(\beta_{X^2}+\beta_{XYXY}\right),\\
 	   \beta_{X^3Y} &= \beta_{X^3}-\frac{2}{n}x^t\hat{D}x. \label{mom-rel-3}
	  \end{align}
	\end{enumerate}
\end{lemma}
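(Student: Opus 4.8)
The plan is to establish the three parts in the order stated: \eqref{Y2=1-pt1} and \eqref{Y2=1-pt2} are two successive normalizations of the pair $(X,Y)$, and \eqref{Y2=1-pt3} is a direct trace computation on the normal form. For \eqref{Y2=1-pt1} I would start from the spectral theorem. Since $Y$ is symmetric with $Y^2=I_n$ it is an involution with eigenvalues in $\{1,-1\}$, and by the hypothesis on the multiplicities there is an orthogonal $U$ with $UYU^t=\widetilde Y=I_{n-1}\oplus(-1)$. Writing $UXU^t=\begin{pmatrix}A & b\\ b^t & \alpha\end{pmatrix}$ with $A\in\SSS\RR^{(n-1)\times(n-1)}$, I would then conjugate once more by a block orthogonal matrix $V_1\oplus V_2$ with $V_1\in O(n-1)$ chosen to diagonalize $A$ and $V_2\in\{1,-1\}$; these are precisely the orthogonal matrices fixing $\widetilde Y$ under conjugation, so $\widetilde Y$ is left unchanged while the leading block becomes diagonal. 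Setting $W=(V_1\oplus V_2)U$ gives $\widetilde X=WXW^t$, $\widetilde Y=WYW^t$ in the claimed form \eqref{form-of-atoms}, and since the normalized trace is invariant under orthogonal conjugation (the first item of the Remark following \eqref{truncated tracial moment sequence special-biv}) we conclude $\mc M_2^{(X,Y)}=\mc M_2^{(\widetilde X,\widetilde Y)}$.

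For \eqref{Y2=1-pt2}, the key observation is that $\widehat X$ arises from $(\widetilde X,\widetilde Y)$ by an invertible affine change of the first variable only. Taking $\phi_1(X,Y)=-\tfrac{\alpha+d_n}{2}+X+\tfrac{\alpha-d_n}{2}Y$ and $\phi_2(X,Y)=Y$, i.e.\ the map \eqref{trans-form} with $a=-\tfrac{\alpha+d_n}{2}$, $b=f=1$, $c=\tfrac{\alpha-d_n}{2}$, $d=e=0$, we have $bf-ce=1\neq 0$ and $(\widehat X,\widetilde Y)=\phi(\widetilde X,\widetilde Y)$. A short block computation confirms the stated form: the leading block is $D-d_nI_{n-1}=D_0\oplus 0$, the $(n,n)$ entry is $\alpha-\alpha=0$, and the off-diagonal vector is unchanged and equal to $x$. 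Consequently the sequence generated by $(\widehat X,\widetilde Y)$ is exactly the $\phi$-transform of the sequence generated by $(\widetilde X,\widetilde Y)$, so by Proposition \ref{linear transform invariance-nc}\eqref{invariance-point5} their representing measures are in bijection through $\phi$, an invertible affine map sending each atom to an atom of the same size and hence preserving the type. Combined with \eqref{Y2=1-pt1}, a measure of a given type for $\mc M_2^{(\widehat X,\widetilde Y)}$ then yields one of the same type for $\mc M_2^{(X,Y)}$.

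Finally, \eqref{Y2=1-pt3} is a mechanical computation. Writing $\widehat X=\begin{pmatrix}\hat D & x\\ x^t & 0\end{pmatrix}$ with $\hat D=D_0\oplus 0$, I would compute the block forms of $\widehat X^2,\widehat X^3,\widehat X^4$ and of the products $\widehat X^2\widetilde Y$, $\widehat X^3\widetilde Y$, $(\widehat X\widetilde Y)^2$, and read off the normalized traces using $\tr(\hat D^k)=\tr(D_0^k)$, $\tr(\hat D^k xx^t)=x^t\hat D^k x$, $\tr((xx^t)^2)=(x^tx)^2$, together with the fact that right-multiplication by $\widetilde Y$ only negates the last diagonal entry. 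This reproduces the listed moments; then \eqref{mom-rel-2} follows by averaging the expressions for $\beta_{X^2}$ and $\beta_{XYXY}$, and \eqref{mom-rel-3} by subtracting $\tfrac{2}{n}x^t\hat D x$ from $\beta_{X^3}$. Each part is individually routine, and the only place needing genuine care is \eqref{Y2=1-pt2}: choosing the affine map so that it simultaneously annihilates the $(n,n)$ entry of $\widehat X$ and the last diagonal entry of its leading block while leaving $\widetilde Y$ fixed, and then invoking the invariance proposition to transport measures together with their type. The computations in \eqref{Y2=1-pt3}, although lengthy, pose no conceptual obstacle.
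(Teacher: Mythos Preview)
Your proposal is correct and follows essentially the same approach as the paper: the same two-step orthogonal conjugation for \eqref{Y2=1-pt1}, the same affine transformation $\phi(X,Y)=\bigl(-\tfrac{\alpha+d_n}{2}+X+\tfrac{\alpha-d_n}{2}Y,\,Y\bigr)$ for \eqref{Y2=1-pt2} together with Proposition~\ref{linear transform invariance-nc}\eqref{invariance-point5}, and a direct block-trace computation for \eqref{Y2=1-pt3}. The only cosmetic difference is that you allow $V_2\in\{1,-1\}$ in the block conjugation, whereas the paper simply takes $V_2=1$.
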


\begin{proof}
	First we prove \eqref{Y2=1-pt1}. There is an orthogonal matrix $U\in \RR^{n\times n}$ such that $UYU^t=:\widetilde Y$ is of the form as in \eqref{form-of-atoms}.
	Further on, there is an orthogonal matrix $V_0\in \RR^{(n-1)\times (n-1)}$ such that by defining $V:=\begin{mpmatrix}V_0 & 0\\ 0 & 1\end{mpmatrix}$, the matrix
	$VUXU^tV =:\widetilde X$ is of the form  \eqref{form-of-atoms}. Since we also have $V\widetilde YV^t=:\widetilde Y$, defining $W=VU$ establishes \eqref{Y2=1-pt1}.
 
	Now we prove  \eqref{Y2=1-pt2}. By applying a linear transformation 
	$\phi(x,y)=(a+x+c y,y)$, where $a=\frac{-d_n-\alpha}{2}$, $c=\frac{\alpha-d_n}{2}$ and $d_n$ is the $(n-1)$-th diagonal entry of $D$ from \eqref{form-of-atoms}
	to the sequence $\beta^{(4)}$, we get a sequence $\widetilde \beta^{(4)}$ with $\mc M_2^{(\widehat X, \widetilde Y)}$ where $\widehat X$ and $\widetilde Y$ are as stated in \eqref{Y2=1-pt2}.
	Since the type of a measure remains unchanged when applying an invertible affine linear transformation, this proves  \eqref{Y2=1-pt2}. 

 	Part \eqref{Y2=1-pt3} of the lemma follows by direct calculation.
	See Appendix \ref{calc-part3-app} for the details.
\end{proof}

\begin{lemma}\label{gen-lemma-one--2}
	Let $(X,Y)\in (\SSS\RR^{n\times n})^2$, $n\geq 2$, be a pair of symmetric matrices of size $n$ of the form
	\begin{equation}\label{form-of-atoms-2}
		X=\left(\begin{array}{cc} D & x \\ x^t & 0\end{array}\right)\in \SSS\RR^{n\times n},\quad 
		Y=\left(\begin{array}{cc} I_{n-1}  & 0 \\ 0 & -1 \end{array}\right)\in \SSS\RR^{n\times n} ,
	\end{equation}
	where $D\in\SSS\RR^{(n-1)\times (n-1)}$ is a diagonal matrix, $x\in \RR^{n-1}$ is a vector.
	Let $(\widetilde X,\widetilde Y)\in (\SSS\RR^{2\times 2})^2$ be a pair of symmetric matrices of size $2$ of the form
	\begin{equation*} 
		\widetilde X=\left(\begin{array}{cc} a & b \\ b & c\end{array}\right)\in \SSS\RR^{2\times 2},\quad 
	  	\widetilde Y=\left(\begin{array}{cc} 1 & 0 \\ 0 & -1 \end{array}\right)\in \SSS\RR^{2\times 2},
	\end{equation*}
	with $b=\sqrt{\frac{\Delta}{2t}}$, $\Delta:=\Delta(\mc M_2^{(X,Y)})$, $t>0$ and
	$B_1, B_2, B_3$ as in Lemma \ref{2-by-2-atom-new-info}.
	If $\mc M_2^{(X,Y)}-t\mc M_2^{(\widetilde X,\widetilde Y)}$ is positive semidefinite for some $t>0$, then 
		$$c=0 \quad\text{and}\quad a=\frac{4x^t D x}{n\Delta}.$$
\end{lemma}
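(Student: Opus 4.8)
The plan is to extract the two scalar identities directly from the hypothesis $M\succeq 0$, where $M:=\mc M_2^{(X,Y)}-t\,\mc M_2^{(\widetilde X,\widetilde Y)}$, by testing $M$ against one cleverly chosen vector. First I would record the few moments of $M$ that are needed. Writing $q:=x^tx$ and $p_1:=x^tDx$, a direct computation for the form \eqref{form-of-atoms-2} (entirely parallel to Lemma \ref{gen-lemma-one--1}\eqref{Y2=1-pt3}, and compatible with \eqref{mom-rel-2}) gives $\Delta=\Delta(\mc M_2^{(X,Y)})=\tfrac{4q}{n}$ and $\beta^{(X,Y)}_{X^2}-\beta^{(X,Y)}_{X^2Y}=\tfrac{\Delta}{2}$, while Lemma \ref{2-by-2-atom} yields $\beta^{(\widetilde X,\widetilde Y)}_{X^2}-\beta^{(\widetilde X,\widetilde Y)}_{X^2Y}=b^2+c^2$. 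Since the choice $b=\sqrt{\Delta/(2t)}$ means precisely $tb^2=\tfrac{\Delta}{2}$, subtracting the two lines gives the single identity
$$\beta^{M}_{X^2}-\beta^{M}_{X^2Y}=\tfrac{\Delta}{2}-t(b^2+c^2)=-tc^2.$$

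The key observation is that, by the cyclic indexing of the moment matrix, the diagonal entries of $M$ in rows $\bX$ and $\bX\bY$ coincide: $M_{\bX,\bX}=\beta^M_{X^2}$ and $M_{\bX\bY,\bX\bY}=\beta^M_{X^2Y^2}=\beta^M_{X^2}$ (the last equality because $\widetilde Y^2=I_2$ and $Y^2=I_n$, so $\beta_{X^2Y^2}=\beta_{X^2}$ for each atom and hence for $M$), whereas $M_{\bX,\bX\bY}=\beta^M_{X^2Y}$. Testing $M$ against $v:=e_{\bX}-e_{\bX\bY}$ therefore gives
$$v^tMv=M_{\bX,\bX}-2M_{\bX,\bX\bY}+M_{\bX\bY,\bX\bY}=2\big(\beta^M_{X^2}-\beta^M_{X^2Y}\big)=-2tc^2.$$
Because $M\succeq 0$ forces $v^tMv\geq 0$ and $t>0$, this immediately yields $c^2\leq 0$, i.e.\ $c=0$, which is the first claim.

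With $c=0$ the displayed inequality is an equality, so $v^tMv=0$ and the psd property upgrades $v$ to a genuine kernel vector of $M$; thus the columns $\bX$ and $\bX\bY$ of $M$ coincide. Reading off the entry in row $\bX^2$ of this column identity gives $\beta^M_{X^3}=\beta^M_{X^3Y}$. Here I would again use the bookkeeping of the two atoms: $\beta^{(X,Y)}_{X^3}-\beta^{(X,Y)}_{X^3Y}=\tfrac{2p_1}{n}$ (this is \eqref{mom-rel-3}), while setting $c=0$ in Lemma \ref{2-by-2-atom} gives $\beta^{(\widetilde X,\widetilde Y)}_{X^3}-\beta^{(\widetilde X,\widetilde Y)}_{X^3Y}=ab^2$, so that $\beta^M_{X^3}-\beta^M_{X^3Y}=\tfrac{2p_1}{n}-t\,ab^2=\tfrac{2}{n}(p_1-aq)$. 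Setting this to zero and solving gives $a=\tfrac{p_1}{q}=\tfrac{4x^tDx}{n\Delta}$, the second claim. The only delicate point is the entry bookkeeping: one must correctly identify the cyclically equivalent word indexing each relevant entry of $M$ (so that the $(\bX\bY,\bX\bY)$ entry really is $\beta^M_{X^2Y^2}$ and the row-$\bX^2$ entry of the $\bX\bY$ column really is $\beta^M_{X^3Y}$). Once the entries are pinned down, the argument reduces to the two one-line positivity computations above.
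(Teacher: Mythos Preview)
Your proof is correct and uses the same test vector as the paper (namely $e_{\bX}-e_{\bX\bY}$, which is the paper's $v=(0,-1,0,1)^t$ restricted to the block $\{\mathds 1,\bX,\bY,\bX\bY\}$), but reaches the conclusion by a shorter route. The paper splits $t\,\mc M_2^{(\widetilde X,\widetilde Y)}=B_1+tB_2+\tfrac{1}{t}B_3$ and first shows that $v$ already lies in the kernel of $\mc M_2^{(X,Y)}-B_1$; positive semidefiniteness of $M$ together with $B_2,B_3\succeq 0$ then forces $v\in\ker B_2$, and computing $B_2v$ gives $c=0$. The value of $a$ is then extracted by enlarging to the block $\{\mathds 1,\bX,\bY,\bX^2,\bX\bY\}$ and reading off the row-$\bX^2$ entry, exactly as you do. Your argument bypasses the $B_1,B_2,B_3$ machinery entirely by computing $v^tMv=-2tc^2$ in one line, which is cleaner and more self-contained for this lemma; the paper's route has the advantage that the decomposition is already in place from Lemma~\ref{2-by-2-atom-new-info} and is reused in the proof of Theorem~\ref{main-res-2}.
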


\begin{proof}
We begin by analyzing the kernel of $\big[\mc M_2^{(X,Y)}-B_1\big]_{\{\mds 1, \bX,\bY,\bX\bY\}}$.\\

\noindent{\textbf{Claim 1.}} $v:=\left(\begin{array}{cccc} 0 & -1 & 0 & 1\end{array}\right)^T\in \ker\big[\mc M_2^{(X,Y)}-B_1\big]_{\{\mds 1, \bX,\bY,\bX\bY\}}$.\\

Using Lemmas \ref{2-by-2-atom-new-info} and \ref{gen-lemma-one--1} 
	$$\left( \mc M_2^{(X,Y)} - B_1\right)|_{\{\mds 1, \bX,\bY,\bX\bY\}}=
	  \left(\begin{array}{cccc}  \ast & \beta_X &\ast & \beta_{XY}\\ \ast & \frac{1}{2}\left(\beta_{X^2} + \beta_{XYXY}\right) & \ast & \beta_{X^2Y} \\ \ast & \beta_{XY} & \ast & \beta_X \\ 
		\ast & \beta_{X^2Y} & \ast &  \frac{1}{2}\left(\beta_{X^2} + \beta_{XYXY}\right) \end{array}\right).$$
Moreover, using \eqref{mom-rel-2} we see that the second and the forth column of the matrix $\left[ \mc M_2^{(X,Y)} - B_1\right]_{\{\mds 1, \bX,\bY,\bX\bY\}}$ are equal.
Hence the vector 
	$v$
is in the kernel of  $\left[ \mc M_2^{(X,Y)} - B_1\right]_{\{\mds 1, \bX,\bY,\bX\bY\}}$.\\

Since $B_2$ and $B_3$ are psd by Lemma \ref{2-by-2-atom-new-info}, Claim 1 implies that $v$ must be in the kernel of both 
$[B_2]_{\{\mds 1, \bX,\bY,\bX\bY\}}$ and $[B_3]_{\{\mds 1, \bX,\bY,\bX\bY\}}$
if $\mc M_2^{(X,Y)}-t\mc M_2^{(\widetilde X,\widetilde Y)}$ is psd for some $t>0$.
We have that $[B_3]_{\{\mds 1, \bX,\bY,\bX\bY\}}=\mbf 0_4$ so $v$ is indeed in its kernel, while $[B_2]_{\{\mds 1, \bX,\bY,\bX\bY\}}v$ is equal to
$$\begin{mpmatrix}
1 & \frac{1}{2} (a+c) & 0 & \frac{1}{2} (a-c) \\
 \frac{1}{2} (a+c) & \frac{1}{2} \left(a^2+c^2\right) & \frac{1}{2} (a-c) & \frac{1}{2} (a-c) (a+c) \\
 0 & \frac{1}{2} (a-c) & 1 & \frac{1}{2} (a+c) \\
 \frac{1}{2} (a-c) & \frac{1}{2} (a-c) (a+c) & \frac{1}{2} (a+c) & \frac{1}{2} \left(a^2+c^2\right)\end{mpmatrix} v=
c\cdot 
\begin{mpmatrix}
 -1\\ -c \\ 1\\c
\end{mpmatrix}.
$$
Hence we must have $c=0$.\\

\noindent{\textbf{Claim 2.}} If $\mc M_2^{(X,Y)}-t\mc M_2^{(\widetilde X,\widetilde Y)}$ is psd, then 
	$\widetilde v:=\left(\begin{array}{ccccc} 0 & -1 & 0 & 0 & 1\end{array}\right)^T\in \ker\big[\mc M_2^{(X,Y)}-B_1\big]_{\{\mds 1, \bX,\bY,\bX^2, \bX\bY\}}$.\\

By Claim 1 it easily follows that 
\begin{equation}\label{eq0}
	{\widetilde v}^T \big[\mc M_2^{(X,Y)}-B_1\big]_{\{\mds 1, \bX,\bY,\bX^2, \bX\bY\}}\widetilde v=0.
\end{equation} 
If $\mc M_2^{(X,Y)}-t\mc M_2^{(\widetilde X,\widetilde Y)}$
is psd, $\mc M_2^{(X,Y)}-B_1$ is psd and by \eqref{eq0} Claim 2 follows.\\

Using Claim 2 and $c=0$, $\mc M_2^{(X,Y)}-t\mc M_2^{(\widetilde X,\widetilde Y)}$ being psd for some $t>0$, implies that
	$$\beta_{X^3}-\frac{3\Delta}{4}a=\left[ \mc M_2^{(X,Y)} - B_1\right]_{\{\{\bX^2\},\{\bX\}\}}=\left[ \mc M_2^{(X,Y)} - B_1\right]_{\{\{\bX^2\},\{\bX\bY\}\}}=\beta_{X^3Y}-\frac{\Delta}{4}a,$$
which further implies that
	$$a=\frac{2}{\Delta}(\beta_{X^3}-\beta_{X^3Y})=\frac{4x^t D x}{n\Delta},$$
where we used \eqref{mom-rel-3} for the second equality. This proves the lemma.
\end{proof}

\begin{proof}[Proof of Theorem \ref{main-res-2}]
We have to prove that $\cM_2^{(X,Y)}$, where $(X,Y)\in (\SSS\RR^{3\times 3})^2$ and $Y^2=I_3$,
has a measure of type $(m_1,m_2)$, where $m_1,m_2\in \NN\cup\{0\}$.

If $Y$ has all eigenvalues equal to 1 or $-1$, then $X$ and $Y$ commute and there is an orthogonal transformation $U\in \RR^{3\times 3}$
such that $UXU^t$ is diagonal and $UYU^t=\pm I_3$. Since $\cM_2^{(X,Y)}=\cM_2^{(UXU^t,UYU^t)}$, there exists a measure consisting of $m_1\leq 3$, atoms of size 1. 

Else $Y$ has two eigenvalues of the same sign and the third of the other. We may assume WLOG that two eigenvalues are 1 and the third is $-1$
(otherwise we do an affine linear transformation $(x,y)\mapsto (x,-y)$).
By Lemma \ref{gen-lemma-one--1} \eqref{Y2=1-pt2} it is enough to prove that $\cM_2^{(X,Y)}$
has a measure of type $(m_1,m_2)$, where $m_1,m_2\in \NN\cup\{0\}$,
for 
  $$X=\begin{mpmatrix} x_1 & 0 & x_2\\ 0 & 0 & x_3 \\ x_2 & x_3 & 0\end{mpmatrix},\quad Y=\begin{mpmatrix} 1 & 0 & 0\\ 0& 1& 0\\ 0& 0&-1\end{mpmatrix},$$
where $x_1,x_2,x_3\in \RR$.
We will separate two cases.
\\
\noindent\textbf{Case 1.} $x_1=0$ or $x_2=0$ or $x_3=0$:\\

	If $x_1=0$, we have $XY+YX=0$ and $\mc M_2^{(X,Y)}$ is of rank at most 5. By \cite[Theorems 3.1, 6.5, 6.8, 6.11, 6.14]{BZ18} it follows that $\mc M_2^{(X,Y)}$ admits a measure of type $(m_1,1)$ where $m_1\in \NN$.
	
	If $x_2=0$, the subspace $\Span\{e_1\}$ is reducing for $X$ and $Y$, and we can replace $(X,Y)$ by $(x_1,1)$ of density $\frac{1}{3}$ and $\Big(\begin{mpmatrix}0 & x_3\\ x_3 & 0\end{mpmatrix}, \begin{mpmatrix}1 & 0\\ 0 & -1\end{mpmatrix}\Big)$
	of densitiy $\frac{2}{3}$. 
	
	If $x_3=0$, the subspace $\Span\{e_2\}$ is reducing for $X$ and $Y$, and we can replace $(X,Y)$ by $(0,1)$ of density $\frac{1}{3}$ and $\Big(\begin{mpmatrix}x_1 & x_2\\ x_2 & 0\end{mpmatrix}, \begin{mpmatrix}1 & 0\\ 0 & -1\end{mpmatrix}\Big)$
	 of density $\frac{2}{3}$.
	This proves the theorem in Case 1.\\

\noindent\textbf{Case 2.} $x_1\neq 0$ and $x_2\neq 0$ and $x_3\neq 0$:\\

We will prove that $\cM_2^{(X,Y)}$ admits a measure of type $(m_1,1)$, $m_1\in \NN$. We denote by $(X_1,Y_1)\in (\SSS\RR^{2\times 2})^2$ 
the atom of size 2 and by $t$ its density.
By Lemma \ref{2-by-2-atom} we may assume that 
  $X_1=\begin{mpmatrix} a & b \\ b & c\end{mpmatrix}$, $Y_1=\begin{mpmatrix} 1 & 0 \\ 0& -1\end{mpmatrix}.$
Furthermore, by Lemma \ref{2-by-2-atom} we must have 
	\begin{equation}\label{t-and-diff-of-nc-mom}
		b=\pm \sqrt{\frac{1}{2t}\Delta(\mc M_2^{(X,Y)})}=\pm \sqrt{\frac{2}{3t}(x_2^2+x_3^2)}.
	\end{equation} 
Since $\beta_{Y}(\cM_2^{(X,Y)})=\frac{1}{3}$, $\beta_{Y}(\cM_2^{(X_1,Y_1)})=0$ and $\beta_{Y}(\cM_2^{(x_i,y_i)})=\pm 1$ for every atom $(x_i,y_i)$ of size $1$,
the sum $\sum_{i} \mu_i$ of the densities $\mu_i$ of atoms of size 1 must be at least $\frac{1}{3}$. Hence, the density $t$ satisfies $t\leq \frac{2}{3}$. Since the atoms of size 1 are not
sufficient, we have that $t>0$. To prove the theorem in Case 2 it suffices to prove the following claim.\\

\noindent\textbf{Claim.} There exists $t\in (0,\frac{2}{3}]$ such that 
		$$F(t):=\mc M_2^{(X,Y)}-t\cdot \mc M_2^{(X_1,Y_1)}$$ 
	admits a measure consisting of $m_1\in \NN$ atoms of size 1.\\

The necessarry condition for $F(t)$, $t>0$, to admit a measure is $F(t)\succeq 0$.
By Lemma \ref{gen-lemma-one--2} 
we must have $c=0$ and $a=\frac{x_1x_2^2}{x_2^2+x_3^2}$ in $X_{1}$.
Let $B_1,B_2,B_3$ be as in Lemma \ref{2-by-2-atom-new-info}. 
We have that 
\begin{align*}
  F(t)
  &=\mc M_2^{(X,Y)}-B_1-tB_2-\frac{1}{t}B_3\\
  &=\underbrace{\frac{1}{3}\begin{mpmatrix}
		 3 &  x_1 & 1 & x_1^2  & x_1 & x_1 & 3\\
		 x_1 & x_1^2  & x_1 & x_1^3 & x_1^2 & x_1^2 & x_1 \\
		  1& x_1 & 3 &  x_1^2 &  x_1^2 &  x_1 & 1 \\
		 x_1^2  & x_1^3 & x_1^2   & C(x_1,x_2,x_3) & x_1^3 & x_1^3&  x_1^2  \\
		x_1 &  x_1^2 &  x_1 &  x_1^3 &  x_1^2   &  x_1^2  &  x_1 \\
  		x_1 &  x_1^2 &  x_1 &  x_1^3 &  x_1^2   &  x_1^2 &  x_1 \\
		  3 &  x_1 & 1 &  x_1^2   &x_1&x_1 & 3
  \end{mpmatrix}}_{\mc M_2^{(X,Y)}-B_1}-
 \underbrace{\frac{t}{2}\begin{mpmatrix}
	2 & a & 0 & a^2 & a & a & 2\\
	a & a^2  & a  & a^3 & a^2 & a^2 & a \\
	 0 & a & 2 & a^2 & a & a & 0 \\
	a^2 & a^3  & a^2 & a^4 & a^3 &a^3 & a^2  \\
	a & a^2 & a & a^3  & a^2 & a^2  & a \\
	a & a^2 & a & a^3  & a^2 & a^2 & a \\
	2 & a & 0 & a^2 & a & a & 2 \\
	\end{mpmatrix}}_{tB_2}-\underbrace{\frac{1}{9t}(x_2^2+x_3^2)^2E_{44}}_{\frac{1}{t}B_3},
\end{align*}
where
	$$C(x_1,x_2,x_3) =x_1^4+4\frac{x_1^2x_2^2x_3^2}{x_2^2+x_3^2}+2(x_2^2+x_3^2)^2,$$
and the forms of $\mc M_2^{(X,Y)}$, $B_1, B_2, B_3$ are from Lemmas \ref{gen-lemma-one--1} \eqref{Y2=1-pt3}, \ref{gen-lemma-one--2}.
Clearly the kernels of $\mc M_2^{(X,Y)}-B_1$, $B_2$ and $B_3$ contain the vectors
	\begin{equation*}
		v_1 = (-1,0,0,0,0,0,1)^T,\quad v_2 = (0,-1,0,0,0,1,0)^T,\quad v_3 = (0,-1,0,0,1,0,0)^T.
	\end{equation*}
Hence, to prove that $F(t)$ is psd for some $t>0$ it is enough to consider the submatrix
	 $$[F(t)]_{\{\mds 1,\bX,\bY,\bX^2\}}.$$
Its principal minors are the following
	\begin{align*}
	  \det\left([F(t)]_{\{\mds 1\}}\right) 
	  &= 1-t,\\
	\det\left([F(t)]_{\{\mds 1,\bX\}}\right)  
	  &= \frac{x_1^2 \left(\left(9 t^2-18 t+8\right) x_2^4+4 (4-3 t) x_2^2 x_3^2+4 (2-3 t) x_3^4\right)}{36 \left(x_2^2+x_3^2\right)^2},\\
	\det\left([F(t)]_{\{\mds 1,\bX,\bY\}}\right)  
	  &= \frac{(2-3 t) x_1^2 \left((2-3 t) x_2^4+(2-3 t) x_3^4+4 x_2^2 x_3^2\right)}{27 \left(x_2^2+x_3^2\right)^2},\\
	\det\left([F(t)]_{\{\mds 1,\bX,\bY,\bX^2\}}\right)
	  &=\frac{1}{243 t ((x_2^2+x_3^2)^4) x_1^2} f(t),\\
	\end{align*}
where
	$$f(t)=f_0(x_1,x_2,x_3)+t\cdot f_1(x_1,x_2,x_3)+t^2\cdot f_2(x_1,x_2,x_3)+t^3\cdot f_3(x_1,x_2,x_3),$$
and
	\begin{align*}
	  f_0(x_1,x_2,x_3)
	  &=-16 (x_2^2 + x_3^2)^6,\\
	  f_1(x_1,x_2,x_3)
	  &=24 (x_2^2 + x_3^2)^3 (3 x_2^6 + 7 x_2^4 x_3^2 + 
   3 x_3^6 + x_2^2 (2 x_1^2 x_3^2 + 7 x_3^4)),\\
	  f_2(x_1,x_2,x_3)
	  & =-18 (6 x_2^{12} + 
   28 x_2^{10} x_3^2 + 6 x_3^{12} + 4 x_2^2 x_3^8 (2 x_1^2 + 7 x_3^2) + 
   x_2^8 (8 x_1^2 x_3^2 + 58 x_3^4) +\\ 
   	&\hspace{2cm}+x_2^4 x_3^4 (x_1^4 + 16 x_1^2 x_3^2 + 58 x_3^4) + 
   8 x_2^6 (2 x_1^2 x_3^4 + 9 x_3^6)),\\
	  f_3(x_1,x_2,x_3)
	  &=27 (2 x_2^{12} + 8 x_2^{10} x_3^2 + 
   2 x_3^{12} + 4 x_2^2 x_3^8 (x_1^2 + 2 x_3^2) + 
   4 x_2^6 x_3^4 (x_1^2 + 4 x_3^2) + 2 x_2^8 (2 x_1^2 x_3^2 + 7 x_3^4) + \\
  	&\hspace{2cm}+ x_2^4 x_3^4 (x_1^4 + 4 x_1^2 x_3^2 + 14 x_3^4)).	
	\end{align*}
For $t=\frac{2}{3}$ we get
	\begin{equation*}
	\begin{split}
	  \det\left([F\big(\frac{2}{3}\big)]_{\{\mds 1\}}\right) 
	  &= \frac{1}{3},\\
	 \det\left([F\big(\frac{2}{3}\big)]_{\{\mds 1,\bX,\bY\}}\right)   
	  &=0, 
	\end{split}
	\qquad
	\begin{split}
	  \det\left([F\big(\frac{2}{3}\big)]_{\{\mds 1,\bX\}}\right) &=\frac{2 x_1^2 x_2^2 x_3^2}{9 \left(x_2^2+x_3^2\right)^2},\\
	  \det\left([F\big(\frac{2}{3}\big)]_{\{\mds 1,\bX,\bY,\bX^2\}}\right) &=0.
	\end{split}
	\end{equation*} 
In addition we also calculate
	\begin{equation}\label{1-X-X2-det}
	\det\left([F\big(\frac{2}{3}\big)]_{\{\mds 1,\bX,\bX^2\}}\right)=-\frac{x_1^4 x_2^4 x_3^4 (x_1^2-8 (x_2^2+x_3^2))}{27 (x_2^2+x_3^2)^4}.
	\end{equation}
According to \eqref{1-X-X2-det} there are two cases to consider.\\

\noindent \textbf{Case 2.1.} $x_1^2-8 (x_2^2+x_3^2)\leq 0$:\\

It is easy to check that the columns $\mds 1$ and $\bY$ of 
$F\big(\frac{2}{3}\big)$ are both equal to
	$$\Big(
\begin{array}{ccccccc}
 \frac{1}{3} & \frac{x_1 x_5^2}{3
   \left(x_3^2+x_5^2\right)} & \frac{1}{3} &
   \frac{x_1^2 x_5^2 \left(2
   x_3^2+x_5^2\right)}{3
   \left(x_3^2+x_5^2\right)^2} &
   \frac{x_1 x_5^2}{3
   \left(x_3^2+x_5^2\right)} &
   \frac{x_1 x_5^2}{3
   \left(x_3^2+x_5^2\right)} & \frac{1}{3}
\end{array}
\Big)^t.$$
Hence $F\big(\frac{2}{3}\big)$ satisfies the relations $\bY=\mds 1$, $\bX\bY=\bY\bX=\bX$, $\bY^2=\mds 1$. 
Since 
	$$\det\left([F\big(\frac{2}{3}\big)]_{\{\mds 1\}}\right) >0, \quad \det\left([F\big(\frac{2}{3}\big)]_{\{\mds 1,\bX\}}\right)  > 0\quad
	\text{and}\quad\det\left([F\big(\frac{2}{3}\big)]_{\{\mds 1,\bX,\bX^2\}}\right) \geq 0,$$ 
$[F\big(\frac{2}{3}\big)]_{\{\mds 1,\bX,\bX^2\}}$ is psd matrix of rank 2 or 3. 
Hence $F(\frac{2}{3})$ is a psd commutative moment matrix of rank 2 or 3.
If $x_1^2-8 (x_2^2+x_3^2)=0$ then the fifth relation is $\bX^2=a_0\mds 1+a_1\bX$ for some $a_0,a_1\in \RR$.
Thus, it is recursively generated and  by the results of Curto and Fialkow \cite{CF98-1}, \cite{CF02}, \cite{Fia14} (see also \cite[Theorem 2.7]{BZ18})  it admits a measure consisting of 2 or 3 commutative atoms.\\

\noindent \textbf{Case 2.2.} $x_1^2-8 (x_2^2+x_3^2)>0$:\\
 
It is easy to see that for $0<t<\frac{2}{3}$ we have that 
	\begin{equation*}
	  \det\left([F(t)]_{\{\mds 1\}}\right) >0, \quad  \det\left([F(t)]_{\{\mds 1,\bX\}}\right) >0\quad\text{and}\quad   \det\left([F(t)]_{\{\mds 1,\bX,\bY\}}\right)>0.
	\end{equation*}
Since $\det\left([F(\frac{2}{3})]_{\{\mds 1,\bX,\bY,\bX^2\}}\right)=0$, we have that $f(\frac{2}{3})=0$, and hence
	$$f(t)=\big(\frac{2}{3}-t\big)g(t)$$
for some polynomial $g(t)$ which is quadratic in $t$. The polynomial $g(t)$ has a negative leading coefficient which implies that $g(t)$ achieves its maximum at $t_0$
satisfying $g'(t_0)=0$.
A calculation reveals $t_0$ to be
$$\frac{4 \left(x_2^2+x_3^2\right)^3 \left(x_2^2 x_3^2 \left(x_1^2+2 x_3^2\right)+x_2^6+2 x_2^4 x_3^2+x_3^6\right)}{3 \left(2 x_2^8
   \left(2 x_1^2 x_3^2+7 x_3^4\right)+4 x_2^6 x_3^4 \left(x_1^2+4 x_3^2\right)+4 x_2^2 x_3^8 \left(x_1^2+2 x_3^2\right)+x_2^4
   x_3^4 \left(x_1^4+4 x_1^2 x_3^2+14 x_3^4\right)+2 x_2^{12}+8 x_2^{10} x_3^2+2 x_3^{12}\right)}.$$
Moreover, 
$g(t_0)$ equals 
$$\frac{24 x_2^4 x_3^4
   \left(x_2^2+x_3^2\right)^6 \left(x_1^4+4 x_1^2
   \left(x_2^2+x_3^2\right)+2
   \left(x_2^2+x_3^2\right)^2\right)}{2 x_2^8 \left(2
   x_1^2 x_3^2+7 x_3^4\right)+4 x_2^6 x_3^4
   \left(x_1^2+4 x_3^2\right)+4 x_2^2 x_3^8
   \left(x_1^2+2 x_3^2\right)+x_2^4 x_3^4
   \left(x_1^4+4 x_1^2 x_3^2+14 x_3^4\right)+2
   x_2^{12}+8 x_2^{10} x_3^2+2 x_3^{12}},$$
which is strictly positive as the numerator and denominator of $g(t_{0})$ are sum of squares, and $x_{i}\neq 0$.
Now we only need that $0<t_0<\frac{2}{3}$.
The numerator and the denominator of $t_0$ are linear combinations of monomials 
	$$ x_2^{12} ,\; x_2^{10}x_3^{2} ,\; x_2^{8}x_3^{4} ,\; x_2^{6}x_3^{6} ,\;x_2^{4}x_3^8 ,\; x_2^{2}x_3^{10} ,\; x_2^{12} ,\; x_1^2x_2^{8}x_3^{2} ,\;
		x_1^2x_2^{6}x_3^{4},\;x_1^2x_2^{4}x_3^{6},\; x_1^2x_2^{2}x_3^{8},\; x_1^4x_2^{4}x_3^{4},$$
with the following coefficients:

$$\begin{array}{c|c|c|c|c|c|c|c|c|c|c|c|c}
\text{monomial}&x_2^{12} &  x_2^{10}x_3^{2} &  x_2^{8}x_3^{4} &  x_2^{6}x_3^{6} & x_2^{4}x_3^8 &  x_2^{2}x_3^{10} &  x_2^{12} &  x_1^2x_2^{8}x_3^{2} &
x_1^2x_2^{6}x_3^{4}& x_1^2x_2^{4}x_3^{6}&  x_1^2x_2^{2}x_3^{8}&  x_1^4x_2^{4}x_3^{4} \\
\hline
\text{numerator}& 4 & 20 & 44 & 56 & 44 & 20 & 4 & 4 & 12 & 12 & 4 & 0\\ 
\hline
\text{denominator}& 6 & 24 & 42 & 48 & 42 & 24 & 6 & 12 & 12 & 12 & 12 & 3\\
\end{array}.$$
Since we are in Case 2.2 we can use the inequality 
	$$x_1^2>8x_{2}^2+8x_{3}^2$$ 
to estimate
  \begin{eqnarray}
	x_1^2x_2^{8}x_3^{2} &>& 8x_2^{10}x_3^2+8x_2^{8}x_3^4, \label{ineq1}\\
	x_1^2x_2^{2}x_3^{8} &>& 8x_2^{4}x_3^{8}+8x_2^2x_3^{10},\label{ineq2}\\
	x_1^4x_2^{4}x_3^{4} &>& 8x_1^2x_2^{6}x_3^4+8x_1^2x_2^{4}x_3^6 \label{ineq3}\\
	x_1^4x_2^{4}x_3^{4} &>& 64x_2^8x_3^4+128x_2^6x_3^6+64x_2^4x_3^8.\label{ineq4}
  \end{eqnarray}
Summing up all the inequalities \eqref{ineq1}-\eqref{ineq4} we see that 
\begin{equation*} 
  x_1^2x_2^{8}x_3^{2}+x_1^2x_2^{2}x_3^{8}+2x_1^4x_2^{4}x_3^{4} 
	> 8x_2^{10}x_3^2+72x_2^{8}x_3^4+8x_2^{2}x_3^{10}+72x_2^{4}x_3^8+8x_1^2x_2^{6}x_3^4+8x_1^2x_2^{4}x_3^6+128x_2^6x_3^6.
\end{equation*}
Using this inequality we estimate the denominator from below by the coefficients:
$$\begin{array}{c|c|c|c|c|c|c|c|c|c|c|c|c}
\text{monomial}&x_2^{12} &  x_2^{10}x_3^{2} &  x_2^{8}x_3^{4} &  x_2^{6}x_3^{6} & x_2^{4}x_3^8 &  x_2^{2}x_3^{10} &  x_2^{12} &  x_1^2x_2^{8}x_3^{2} &
x_1^2x_2^{6}x_3^{4}& x_1^2x_2^{4}x_3^{6}&  x_1^2x_2^{2}x_3^{8}&  x_1^4x_2^{4}x_3^{4} \\
\hline
\text{lower bound}& 6 & 32 & 114 & 176 & 114 & 32 & 6 & 11 & 20 & 20 & 11 & 1
\end{array}.$$
Since all the coefficients of the lower bound on the denominator are at least $\frac{3}{2}$ times the corresponding coefficients of the numerator with strict inequalities at some coefficients, we conclude that the denominator is
bigger that $\frac{3}{2}$ of the numerator and hence $t_0< \frac{2}{3}$. Hence $F(t_0)$ is a cm moment matrix of rank 4, which is RG and psd with the cm variety $\{(x,y)\colon y=1\} \cup\{(0,-1)\}$ of infinite cardinality. Hence it
admits a measure consisting of atoms of size 1 by the results of Curto and Fialkow see \cite{Fia14} and reference therein. This settles Case 2.2, and concludes the proof of the Claim. Thus the theorem is proved.
\end{proof}

\begin{remark}
\begin{enumerate}
\item Note that Lemma \ref{gen-lemma-one--2} is true for any $n$ not only $n=3$. Hence if $Y$ has only 1 eigenvalue of some sign, then the atom of size 2 is uniquely determined up to density. Numerical experiments show that even in this case Claim 2 from the proof of Theorem  \ref{main-res-2} is true, but we were not able to find a theoretical argument for this observation as in the case $n=3$. So in the future research we plan to find some argument for the existence of such $t$ without using brute force methods.
\item If $Y$ has multiplicity of both eigenvalues at least 2, then possible atoms of size 2 in the measure are not unique anymore (up to density), so
some other construction of the measure is needed.
\item The characterization of finite sequences of real numbers that are the moments of \textit{one-atomic} tracial measures is deeply connected with Horn's problem (cf., \cite{CW18}). One approach to solve Horn's problem for $n\in\mbb{N}$, is to instead solve the one-atomic bivariate tracial moment problem of degree $2n-2$. 
In particular, solving the bivariate quartic tracial moment problem with the restriction of representing measures having a single size 3 atom $(X,Y)\in (\SSS\RR^{3\times 3})^2$, solves Horn's problem for $n=3$. The results of \cite{BZ18} and the analysis of this section do precisely this in the singular case, i.e., when the moment matrix $\mc M_2^{(X,Y)}$ is singular.
\end{enumerate}
\end{remark}

\section{Extension to $\mc{M}_n$ with two relations in $\mc M_2$}
\label{S3}

The main result of this subsection, Theorem \ref{M_n-XY+YX=0-all-in-one} below, extends the results for the existence of the measure for $\cM_n$, 
with two quadratic column relations, from $n=2$ 
(see \cite[Theorems 6.5, 6.8, 6.11, 6.14]{BZ18}) to an arbitrary $n\in \NN$.

Throughout this section, unless otherwise stated we assume that $n\geq 2$. We will also frequently be considering $[\mc{M}_{n}]_{\{\mds{1}, \bX, \bY, \bX^{2}, \bX\bY, \bY\bX, \bY^{2}\}}$, the quadratic component of $\mc{M}_{n}$. Thus we introduce the notation
	$$
		\mc{M}_{Q} := [\mc{M}_{n}]_{\{\mds{1}, \bX, \bY, \bX^{2}, \bX\bY, \bY\bX, \bY^{2}\}}.
	$$
We say that $\mc{M}_{n}$ is in \textbf{canonical form}, if 
it satisfies the relation 
	$$\bX\bY+\bY\bX = \mbf{0}$$ 
and one of the following relations
\begin{equation}\label{second-column-relation} 
\bY^2=\mbf 1-\bX^2\quad{\text{or}}\quad \bY^2=\mds{1}
\quad{\text{or}}\quad \bY^2=\mds{1}+\bX^2\quad{\text{or}}\quad \bY^2=\bX^2.
\end{equation}

We begin by showing that every $\mc{M}_{n}$, with $\mc{M}_{Q}$ of rank 5, can be transformed into a canonical form.  
\begin{lemma}\label{possible-relations}
	Suppose $\beta\equiv \beta^{(2n)}$ is a 
	nc sequence with a moment matrix $\mc{M}_n$, such that $\mc{M}_{Q}$ is of rank 5. If $\mc M_n$ is positive semidefinite and recursively generated, then there
	exists an affine linear transformation $\phi$ such that
	the sequence $\widehat \beta$, given by $\widehat \beta=L_\beta(w\circ\phi)$ 
	has a moment matrix 
	$\widehat{\mc{M}_n}$ in a canonical form.
\end{lemma}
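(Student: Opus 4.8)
The plan is to recognize $\mc M_Q$ as the order-$2$ moment matrix of a truncation of $\beta$ and then invoke the rank-$5$ classification of \cite{BZ18}. If $\beta^{(4)}$ denotes the truncation of $\beta^{(2n)}$ to words of length at most $4$, then directly from the definition $\mc M_n=(\beta_{w_1^\ast w_2})_{|w_1|,|w_2|\le n}$ we have $\mc M_Q=\mc M_2(\beta^{(4)})$. The one fact I will use repeatedly is a \emph{lifting principle}: for any $M\succeq 0$ and any $p\in\RR\langle X,Y\rangle_{\le 2}$ with coefficient vector $\widehat p$ supported on quadratic words, $p(\bX,\bY)=\mbf 0$ in the quadratic compression $[M]_Q$ if and only if $p(\bX,\bY)=\mbf 0$ in $M$. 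Indeed, if $[M]_Q\,\widehat p=0$ then $\widehat p^{\,t}[M]_Q\widehat p=0$; extending $\widehat p$ by zeros to a vector $\widetilde p$ over all words of length $\le n$ gives $\widetilde p^{\,t}M\widetilde p=\widehat p^{\,t}[M]_Q\widehat p=0$, and positivity $M\succeq 0$ forces $M\widetilde p=0$, i.e. $p(\bX,\bY)=\mbf 0$ in $M$; the converse is immediate by restriction. Applying this to $\mc M_n$ shows that $\mc M_Q$ is positive semidefinite (a compression of a psd matrix) and that its column relations are exactly the degree-$\le 2$ column relations of $\mc M_n$; since $\mc M_n$ is recursively generated, $\mc M_Q$ is too. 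Thus $\mc M_Q$ is a positive semidefinite, recursively generated moment matrix of rank $5$, so \cite[Proposition 4.1 (1)]{BZ18} supplies an affine linear transformation $\phi$ of the form \eqref{trans-form} such that the transformed order-$4$ sequence, obtained by the rule \eqref{lin-trans-orig}, has a moment matrix $\widehat{\mc M_2}$ satisfying $\bX\bY+\bY\bX=\mbf 0$ together with one of the relations in \eqref{second-column-relation}.

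Next I would push $\phi$ through to the full sequence. Let $\widehat\beta$ be obtained from $\beta$ by \eqref{lin-trans-orig}, so that $\widehat{\mc M_n}=(J_\phi)^t\mc M_n J_\phi$ by Proposition \ref{linear transform invariance-nc}; the same proposition guarantees $\widehat{\mc M_n}\succeq 0$ and $\Rank\widehat{\mc M_n}=\Rank\mc M_n$. The compatibility I must verify is $[\widehat{\mc M_n}]_Q=\widehat{\mc M_2}$. This holds because $\phi$ has degree one: for a word $w$ with $|w|\le 2$ the polynomial $w\circ\phi$ again has degree at most $2$, so the column $J_\phi\widehat w=\widehat{w\circ\phi}$ is supported on words of length $\le 2$. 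Consequently, in the $(w_1,w_2)$ entry of $(J_\phi)^t\mc M_n J_\phi$ with $|w_1|,|w_2|\le 2$ only the degree-$\le 2$ indices contribute, giving $[\widehat{\mc M_n}]_Q=(J_\phi^{(2)})^t\mc M_Q\,J_\phi^{(2)}=\widehat{\mc M_2}$, where $J_\phi^{(2)}$ is the restriction of $J_\phi$ to $\RR\langle X,Y\rangle_{\le 2}$.

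Finally I would promote the two quadratic relations from $\widehat{\mc M_2}=[\widehat{\mc M_n}]_Q$ to full column relations of $\widehat{\mc M_n}$ using the lifting principle once more: since $\widehat{\mc M_n}\succeq 0$ and $[\widehat{\mc M_n}]_Q=\widehat{\mc M_2}$, the relations $\bX\bY+\bY\bX=\mbf 0$ and the relevant generator of \eqref{second-column-relation}, which hold in $\widehat{\mc M_2}$, hold in $\widehat{\mc M_n}$ as well. By the definition of canonical form this shows $\widehat{\mc M_n}$ is in canonical form, completing the proof.

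The main obstacle, and the only non-bookkeeping step, is this promotion of relations: a column relation of the compression $\mc M_Q$ is a priori strictly weaker than a column relation of $\mc M_n$, and it is precisely positive semidefiniteness (through the identity $[\widehat{\mc M_n}]_Q=\widehat{\mc M_2}$) that upgrades the quadratic relation to a full one. A secondary point needing care is the compatibility of the affine change of variables with the compression to the quadratic block, which rests entirely on $\phi$ having degree one so that $J_\phi$ respects the degree filtration.
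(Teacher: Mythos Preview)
Your proof is correct and follows essentially the same approach as the paper: invoke \cite[Proposition 4.1(1)]{BZ18} to put $\mc M_Q$ in canonical form, then lift the quadratic relations from the compression to the full matrix using positive semidefiniteness. Your ``lifting principle'' is precisely the content of \cite[Proposition 3.9]{CF96}, which the paper cites directly; you supply the (standard) argument instead, and you are more explicit than the paper in verifying the compatibility $[\widehat{\mc M_n}]_Q=\widehat{\mc M_2}$ via the degree filtration, but the overall route is the same.
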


%
%
%

\begin{proof}[Proof of Lemma \ref{possible-relations}]
	By \cite[Proposition 4.1 (1)]{BZ18} there exists a transformation $\phi$ such that $\widehat{\mc{M}_{Q}}$ is in a
	canonical form (Note that the assumption of \cite[Proposition 4.1 (1)]{BZ18} that $\cM_2$ admits a measure can be replaced by $\cM_2$ is psd and RG since only these two properties are used in the proof.)
	Since $\mc M_n$  (and hence also $\widehat{\mc{M}_n}$) is psd, 
	we conclude by \cite[Proposition 3.9]{CF96} that the relations from $\widehat{\mc{M}_{Q}}$ must also hold in $\widehat{\mc{M}_n}$. This proves the lemma.
\end{proof}

\begin{theorem} \label{M_n-XY+YX=0-all-in-one}
	Suppose $\beta\equiv \beta^{(2n)}$ is a  
	nc sequence with a moment matrix $\mc{M}_n$, which is positive semidefinite,
	recursively generated and $\mc{M}_{Q}$ is of rank 5.
	Then $\beta$ admits a nc measure if and only if  in the canonical form, with $\widehat{\mc{M}_n}$ and $\widehat{\beta}_w$ we have  
		$$\widehat{\mc{M}_n}-|\widehat{\beta}_X| 
			\cM_{n}^{(\sign(\widehat{\beta}_X)1,0)}-
			|\widehat{\beta}_Y|\cM_n^{(0,\sign(\widehat{\beta}_Y)1)}$$
	is positive semidefinite and recursively generated.
	Moreover, all the atoms in the measure are of size at most 2. 
\end{theorem}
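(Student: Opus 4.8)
The plan is to prove the two implications separately after first reducing to canonical form via Lemma \ref{possible-relations}, so throughout we may assume $\mc M_n$ satisfies $\bX\bY+\bY\bX=\mbf 0$ together with one of the relations in \eqref{second-column-relation}, and we transform back at the end through $\phi^{-1}$ using Proposition \ref{linear transform invariance-nc}. Write $W:=|\widehat{\beta}_X|\,\cM_n^{(\sign(\widehat{\beta}_X)1,0)}+|\widehat{\beta}_Y|\,\cM_n^{(0,\sign(\widehat{\beta}_Y)1)}$ and $M':=\widehat{\mc M_n}-W$, so that $\widehat{\beta}^{\,M'}_X=\widehat{\beta}^{\,M'}_Y=0$ by the choice of weights and signs. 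Two structural facts drive everything. First, by Lemma \ref{zero-moment-general2} the relation $\bX\bY+\bY\bX=\mbf 0$ forces every $\beta_{X^iY^j}$ with one exponent odd and the other nonzero to vanish; combining $\beta_{X^{2m+1}Y^2}=0$ with the chosen second relation (which rewrites this moment as $\beta_{X^{2m+1}}$, as $\beta_{X^{2m+1}}\pm\beta_{X^{2m+3}}$, or as $\beta_{X^{2m+3}}$) together with $\widehat{\beta}^{\,M'}_X=0$ propagates to annihilate \emph{all} odd $X$-moments of $M'$, and an analogous computation writing $\beta_{Y^{2m+1}}$ as a combination of the vanishing moments $\beta_{X^{2k}Y}$ kills the odd $Y$-moments. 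Second, by Lemma \ref{zero-moment-general} every nc atom in canonical form has traceless $X$- and $Y$-parts and therefore contributes nothing to $\widehat{\beta}_X,\widehat{\beta}_Y$; hence these two moments must be carried by size $1$ atoms, which after imposing both relations can sit only at the points $(\pm1,0)$ and $(0,\pm1)$.

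For necessity, suppose $\widehat{\beta}$ admits a nc measure and split it into its size $1$ part, of moment matrix $S$, and its nc part, of moment matrix $\widehat{\mc M_n}-S$. A short extremal computation shows that $W$ is the L\"owner-smallest nonnegative combination of point masses at $(\pm1,0),(0,\pm1)$ matching $\widehat{\beta}_X$ and $\widehat{\beta}_Y$: indeed $S-W$ equals a nonnegative multiple of $\cM_n^{(1,0)}+\cM_n^{(-1,0)}$ plus a nonnegative multiple of $\cM_n^{(0,1)}+\cM_n^{(0,-1)}$, so $W\preceq S$. Writing $M'=(\widehat{\mc M_n}-S)+(S-W)$ then exhibits $M'$ as a sum of two moment matrices that each admit a measure (the nc atoms, resp. the leftover size $1$ atoms), hence $M'$ admits their union and is in particular positive semidefinite and recursively generated.

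For sufficiency, assume $M'$ is psd and RG. Testing positivity of $M'$ against the coefficient vector of the relation $\bY^2=\mds1\pm\bX^2$ (resp. $\bY^2=\bX^2$) gives $v^{t}M'v=-v^{t}Wv\le 0$, which forces any size $1$ atom of $W$ violating the second relation to have zero weight; consequently $M'$ again satisfies $\bX\bY+\bY\bX=\mbf 0$ and the second relation, and by the first structural fact all of its odd moments vanish. Since every cross moment between $\{\mds1,\bX,\dots,\bX^n\}$ and $\{\bY,\bX\bY,\dots,\bX^{n-1}\bY\}$ has forbidden mixed parity, $M'$ is block diagonal; the pure-$X$ block, after the substitution $u=X^2$, is the Hankel matrix of the univariate sequence $\rho_k=\beta^{\,M'}_{X^{2k}}$ whose positivity already encodes the Stieltjes constraint $u\ge0$, while the $Y$-block (conjugated by the diagonal sign matrix) is the localizing matrix encoding the remaining support constraint dictated by the second relation (e.g. $u\le1$ for $\bY^2=\mds1-\bX^2$). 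Solving this univariate truncated moment problem via the classical theory—using that the bivariate RG hypothesis descends to the univariate RG/flat condition—yields $\rho=\sum_k w_k\delta_{u_k}$ on the prescribed set. I then lift each $u_k$ to the size $2$ atom $X_k=\bigl(\begin{smallmatrix}0&\sqrt{u_k}\\ \sqrt{u_k}&0\end{smallmatrix}\bigr)$, $Y_k=\bigl(\begin{smallmatrix}\mu_k&0\\0&-\mu_k\end{smallmatrix}\bigr)$, with $\mu_k$ read off from the second relation, and verify by direct computation that $\sum_k w_k\cM_2^{(X_k,Y_k)}$ reproduces $M'$. Adding back the size $1$ atoms of $W$ produces a measure for $\widehat{\mc M_n}$ whose atoms all have size at most $2$, which proves both the equivalence and the final assertion.

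The step I expect to be the main obstacle is the sufficiency reduction: one must check carefully that the bivariate recursive-generation hypothesis on $M'$ descends to exactly the univariate RG/flat-extension and localizing conditions needed to solve the $u=X^2$ problem on the correct support, and that the resulting univariate atoms lift back to reproduce every (necessarily even) moment of $M'$. Handling the four second-relations uniformly—verifying that the positivity test against the relation's kernel vector always forces the offending size $1$ weight to zero, and that the support set ($[0,1]$, $[0,\infty)$, etc.) enforced by $M'\succeq0$ is always the one for which $\mu_k$ is real—is where the bulk of the case work lies.
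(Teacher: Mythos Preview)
Your proposal is correct and follows essentially the same route as the paper: reduce to canonical form, identify $W$ as the L\"owner-minimal size~1 contribution carrying $\widehat\beta_X,\widehat\beta_Y$, observe that $M'=\widehat{\mc M_n}-W$ becomes block diagonal on the ordered basis $\{\mds1,\bX,\dots,\bX^n\}\cup\{\bY,\bX\bY,\dots,\bX^{n-1}\bY\}$, solve a one-variable truncated moment problem for the $X$-block with the support constraint encoded by the $Y$-block, and lift the resulting atoms to size~2 pairs $\bigl(\begin{smallmatrix}0&x\\x&0\end{smallmatrix}\bigr),\bigl(\begin{smallmatrix}\mu&0\\0&-\mu\end{smallmatrix}\bigr)$.

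The differences are presentational rather than substantive. The paper keeps $X$ as the univariate variable and invokes the symmetric Hausdorff/Hamburger theorems directly (citing \cite{KN77} and \cite{CF91}), whereas you pass to $u=X^2$ and phrase the $Y$-block as a localizing matrix; these are equivalent reparametrizations. Your kernel-vector test $v^tM'v=-v^tWv\le 0$ to kill the weights on any size~1 atom violating the second relation is a clean uniform device; the paper instead handles the four cases one at a time (three in an appendix), reading off from Lemma~\ref{zero-moment-general2} that $\widehat\beta_X=0$ automatically in the non-compact cases. Your explicit splitting $M'=(\widehat{\mc M_n}-S)+(S-W)$ for necessity is slightly more transparent than the paper's minimality argument, but both amount to the same L\"owner comparison. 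The caveat you flag---that bivariate RG on $M'$ must descend to the univariate RG/positivity conditions in each of the four cases---is exactly where the paper's case analysis sits, and is routine once the block structure is in hand.
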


Given an $\mc{M}_{n}$ in canonical form the column space of $\mc{M}_{n}$ is easily described.
  
\begin{lemma}\label{2-to-n-extension-lemma}
	Suppose that $\cM_n$ is recursively generated and in a canonical form. 
	Then we have the following:
		\begin{enumerate}[(1)]
			\item $\mc{M}_n$ satisfies the relation $\bX^i\bY+(-1)^{i+1}\bY\bX^i=\mbf 0$ for every $i\in \{1,\dotsc,n-1\}$.
			\item The column space $\mathcal C_{\mathcal{M}_n}$ of  $\mathcal{M}_n$ is equal to
			$$\mathcal C_{\mathcal{M}_n}:=\Span{\Big( \{\mathds{1}\}
				\bigcup \bigcup_{i=1}^n \{ \bX^i, \bX^{i-1}\bY \}\Big)}.$$
		\end{enumerate}
\end{lemma}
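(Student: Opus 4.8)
The plan is to establish the two parts in order, using only the two defining relations of the canonical form, namely $\bX\bY+\bY\bX=\mbf 0$ and the relevant member of \eqref{second-column-relation}, together with the recursive generation property via Theorem \ref{support lemma}\eqref{point-3-support}. Throughout, the key discipline is to keep track of degrees so that every word produced in an intermediate step still lies in $\mathbb{R}\langle X,Y\rangle_{\leq n}$, which is exactly what licenses each appeal to recursive generation.

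For part (1) I would argue by induction on $i$. The case $i=1$ is precisely the canonical relation $\bX\bY+\bY\bX=\mbf 0$. For the inductive step, assume $\bX^{i-1}\bY+(-1)^{i}\bY\bX^{i-1}=\mbf 0$ holds with $i\leq n-1$. Multiplying this degree-$i$ relation on the left by $X$ (admissible since the product has degree $i+1\leq n$) yields $\bX^{i}\bY+(-1)^{i}\bX\bY\bX^{i-1}=\mbf 0$. Separately, multiplying the base relation $\bX\bY+\bY\bX=\mbf 0$ on the right by $X^{i-1}$ gives $\bX\bY\bX^{i-1}=-\bY\bX^{i}$, again of degree $i+1\leq n$. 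Substituting the second identity into the first produces $\bX^{i}\bY+(-1)^{i+1}\bY\bX^{i}=\mbf 0$, which closes the induction.

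For part (2), since $\mathcal C_{\mc M_n}$ is spanned by all columns $w(\bX,\bY)$ with $|w|\leq n$, it suffices to show each such column lies in the span of the listed ones, and I would do this by a two-stage reduction of an arbitrary word. First, using the base relation in the form $\bY\bX=-\bX\bY$, I can replace any adjacent substring $YX$ of a word by $-XY$: applying Theorem \ref{support lemma}\eqref{point-3-support} shows $w_1(YX)w_2(\bX,\bY)=-w_1(XY)w_2(\bX,\bY)$, and all words occurring here have the same degree $|w|\leq n$. Iterating these swaps sorts the letters, so $w(\bX,\bY)=\pm (X^aY^b)(\bX,\bY)$ with $a+b=|w|$. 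Second, I would invoke the second canonical relation, which in all four cases of \eqref{second-column-relation} reads $\bY^2=a_0\mds 1+c_0\bX^2$ for constants $a_0,c_0\in\{-1,0,1\}$. Replacing the rightmost $Y^2$ and then re-sorting the resulting $X^2$ to the front (an even number of swaps, hence sign $+1$) gives $(X^aY^b)(\bX,\bY)=a_0(X^aY^{b-2})(\bX,\bY)+c_0(X^{a+2}Y^{b-2})(\bX,\bY)$; both terms have degree at most $a+b\leq n$, so this lowers the $Y$-exponent by two while staying within degree $n$. Iterating until the $Y$-exponent lies in $\{0,1\}$ expresses $w(\bX,\bY)$ as a combination of columns $X^a$ (with $a\leq n$) and $X^aY$ (with $a\leq n-1$), i.e. of $\mds 1$, the $\bX^i$ for $1\leq i\leq n$, and the $\bX^{i-1}\bY$ for $1\leq i\leq n$, which is exactly the asserted spanning set.

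The algebraic identities above are routine; the one point that needs care is the repeated use of recursive generation. Theorem \ref{support lemma}\eqref{point-3-support} multiplies a relation by only a single factor, so each two-sided multiplication $w_1\,p\,w_2$ must be obtained by applying it twice, $p \to pw_2 \to w_1(pw_2)$, and at each application one must verify that the word produced has degree at most $n$. This holds because every intermediate word in both the sorting stage and the $Y$-reduction stage has degree at most $|w|\leq n$. The only genuine subtlety is therefore this degree bookkeeping together with checking that both reduction procedures terminate (the sorting stage decreases the number of inversions, the second stage decreases the $Y$-exponent); neither constitutes a real obstacle.
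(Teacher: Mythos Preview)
Your proof is correct and follows essentially the same route as the paper's. For part (1) both argue by induction, multiplying the base relation by a power of $X$ and then substituting the inductive hypothesis; for part (2) both sort a general word into the form $\pm \bX^a\bY^b$ via the anticommutation relation and then reduce the $Y$-exponent using \eqref{second-column-relation}. The paper's write-up is terser (it invokes part (1) as a block move rather than adjacent swaps and does not spell out the degree bookkeeping or the two-step application of Theorem~\ref{support lemma}\eqref{point-3-support}), while you make these checks explicit---a worthwhile addition, since keeping every intermediate word in $\mathbb{R}\langle X,Y\rangle_{\leq n}$ is exactly what justifies each use of recursive generation.
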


\begin{proof}
	\textit{(1).} We proceed via induction. For $i=1$, the relation holds due to $\mc{M}_{n}$ being in canonical form. Now suppose that the relation $\bX^i\bY+(-1)^{i+1}\bY\bX^i=\mbf 0$ holds in $\mc{M}_n$ for some $i\in \{1,\dotsc,n-2\}$. Multiplying $\bX\bY+\bY\bX=\mbf 0$ by $\bX^{i}$ from the left we obtain that 
	$$
	\mbf 0=\bX^{i+1}\bY+\bX^{i}\bY\bX = \bX^{i+1}\bY+(-1)^{i+2}\bY\bX^{i+1},
	$$
	where we use the inductive hypothesis for the second equality. 
	By RG, the relation $\bX^{i+1}\bY+(-1)^{i+2}\bY\bX^{i+1}$ also holds in $\mc{M}_{n}$, and hence the statement is proved.

\textit{(2).} Consider a column indexed by a monomial $\bX^{i_0}\bY^{j_1}\bX^{i_1}\bY^{j_2}\cdots \bX^{i_k}\bY^{i_{k+1}}$ where $k\in \NN$, $i_0,j_{k+1}\in\NN\cup \{0\}$ and $i_1, j_1,\ldots, i_k, j_k\in \NN$. Using (1), we know that such a column
is equal to the $\pm 1$ multiple of the column indexed by the monomial 
$\displaystyle\bX^{\sum_{\ell=0}^k i_\ell} \bY^{\sum_{\ell=1}^{k+1} j_\ell}$. 
By using one of the relations \eqref{second-column-relation}, 
the column 
$\displaystyle\bX^{\sum_{\ell=0}^k i_\ell} \bY^{\sum_{\ell=1}^{k+1} j_\ell}$
becomes a linear combination of the columns of the form $\bX^{i}$ and $\bX^{i-1}\bY$ with $i\leq n$.  
\end{proof}

Before proving our main result, the next two lemmas illustrate some properties of the moments in our setting. In particular, we show
that many moments obtained from nc atoms in the measure for $\mc M_n$ are 0. 

\begin{lemma}\label{zero-moment-general}
	Suppose that $\mathcal{M}_n$ satisfies the relation $\bX\bY+\bY\bX=\mbf{0}$.
	If $\beta$ admits a nc measure, then there exists a measure in which every nc atom is of the form 
	\begin{equation}\label{form-of-nc-atoms-general}
		\widetilde{X}=\left(\begin{matrix} \mbf 0_t & B \\ B^t & -\mbf 0_t \end{matrix}\right),
		\quad \widetilde{Y}=\left(\begin{matrix} \mu I_t & \mbf 0_t\\ \mbf 0_t & -\mu I_t \end{matrix}\right),
	\end{equation}
	with $(\widetilde{X},\widetilde{Y})\in (\SSS\RR^{2t\times 2t})^{2}$, $t\in \NN$, $B\in \RR^{t\times t}$, $\mu>0$.
Moreover, every such atoms satisfies:
\begin{enumerate}
	\item\label{moments-general-pt1} $\beta^{(\widetilde{X},\widetilde{Y})}_{X^{2i+1}}=0$ for every $i\in \NN$ such that $2i+1\leq 2n$.
	\item\label{moments-general-pt2} $\beta^{(\widetilde{X},\widetilde{Y})}_{X^jY}=0$ for every $j\in \NN\cup\{0\}$ such that $j+1\leq 2n$.
	\item\label{moments-general-pt4} $\beta^{(\widetilde{X},\widetilde{Y})}_{X^kY^2}=0$ 
		for every odd $k\in \NN$.
\end{enumerate}
\end{lemma}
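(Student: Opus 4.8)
The plan is to first extract the defining algebraic relation satisfied by the atoms from the column relation of $\mc M_n$, then normalize the atoms to the block shape \eqref{form-of-nc-atoms-general} by a spectral argument, and finally read off the vanishing moments \eqref{moments-general-pt1}--\eqref{moments-general-pt4} by elementary trace computations on this shape.

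First I would apply Theorem \ref{support lemma}\eqref{point-1-support} to the polynomial $p=XY+YX$, which lies in $\mathbb R\langle X,Y\rangle_{\le 2}\subseteq \mathbb R\langle X,Y\rangle_{\le n}$ because $n\ge 2$. Since $\bX\bY+\bY\bX=\mbf 0$ holds in $\mc M_n$, every atom $(X_i,Y_i)$ of the given measure lies in $\mc Z(XY+YX)$, i.e.\ $X_iY_i+Y_iX_i=0$; each atom is thus an anticommuting pair of real symmetric matrices. This is the only point at which the hypothesis on $\mc M_n$ is used.

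Next I would reduce a single such anticommuting pair $(X,Y)$ of size $s$. As $Y$ is symmetric, its eigenspaces decompose $\RR^s$ orthogonally, and anticommutation forces $X$ to carry the $\mu$-eigenspace $E_\mu$ into $E_{-\mu}$: from $Yv=\mu v$ one gets $Y(Xv)=-X(Yv)=-\mu Xv$. Hence $\ker Y$ is $X$-invariant, and for each $\mu>0$ the mutually orthogonal subspace $E_\mu\oplus E_{-\mu}$ is invariant for both $X$ and $Y$. On $\ker Y$ we have $Y=0$ and $X$ symmetric, so diagonalizing $X$ yields size-$1$ (commutative) atoms. On $E_\mu\oplus E_{-\mu}$, in an adapted orthonormal basis $Y=\operatorname{diag}(\mu I,-\mu I)$, while symmetry of $X$ together with $X(E_\mu)\subseteq E_{-\mu}$ forces $X=\left(\begin{smallmatrix}0 & B\\ B^t & 0\end{smallmatrix}\right)$ with $B$ possibly non-square. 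Applying the singular value decomposition $B=U\Sigma V^t$ and conjugating by the orthogonal matrix $\operatorname{diag}(U,V)$, which preserves $Y$, replaces $X$ by $\left(\begin{smallmatrix}0 & \Sigma\\ \Sigma^t & 0\end{smallmatrix}\right)$; peeling off the directions belonging to vanishing singular values as further size-$1$ atoms $(0,\pm\mu)$ leaves, for each $\mu>0$, a single block of the form \eqref{form-of-nc-atoms-general} with $t=\operatorname{rank}(B)$ and $B$ the diagonal matrix of nonzero singular values. Because the trace is invariant under orthogonal conjugation ($\tr(w(UXU^t,UYU^t))=\tr(w(X,Y))$) and because splitting a block-diagonal atom $(X,Y)=\bigoplus_j(A_j,B_j)$ into its summands while rescaling the density $\lambda$ into $\lambda\,s_j/s$ preserves every moment, the reshaped family is still a representing measure for $\beta$, and all of its nc atoms are of the form \eqref{form-of-nc-atoms-general}.

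It then remains to verify \eqref{moments-general-pt1}--\eqref{moments-general-pt4} for $\widetilde X=\left(\begin{smallmatrix}0 & B\\ B^t & 0\end{smallmatrix}\right)$ and $\widetilde Y=\operatorname{diag}(\mu I_t,-\mu I_t)$; as the moments are normalized traces $\tfrac{1}{2t}\tr(\cdot)$, it suffices to show the unnormalized traces vanish. The key observation is that $\widetilde X$ is block anti-diagonal, so its odd powers are block anti-diagonal (hence of zero trace) and its even powers are $\widetilde X^{2m}=\operatorname{diag}((BB^t)^m,(B^tB)^m)$. This gives $\tr(\widetilde X^{2i+1})=0$, i.e.\ \eqref{moments-general-pt1}; and since $\widetilde Y^2=\mu^2 I$, it gives $\tr(\widetilde X^k\widetilde Y^2)=\mu^2\tr(\widetilde X^k)=0$ for odd $k$, i.e.\ \eqref{moments-general-pt4}. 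For \eqref{moments-general-pt2}, writing $\widetilde Y=\mu\operatorname{diag}(I,-I)$: when $j$ is odd, $\widetilde X^j\widetilde Y$ is again block anti-diagonal and its trace vanishes, whereas when $j$ is even the identity $\tr\big((BB^t)^{j/2}\big)=\tr\big((B^tB)^{j/2}\big)$ yields $\tr(\widetilde X^j\widetilde Y)=\mu\big(\tr((BB^t)^{j/2})-\tr((B^tB)^{j/2})\big)=0$. The main obstacle is the reduction in the third paragraph, and specifically the case $\dim E_\mu\neq\dim E_{-\mu}$ (equivalently $B$ non-square): this is exactly where the singular value decomposition and the peeling-off of surplus commutative atoms are needed, together with the bookkeeping that verifies the reshaped family still represents $\beta$.
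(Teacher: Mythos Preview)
Your proof is correct. The trace computations in your final paragraph are essentially identical to those in the paper (the paper writes out the explicit block forms of $\widetilde X^{2i}$, $\widetilde X^{2i}\widetilde Y$, $\widetilde X^{2i+1}$, $\widetilde X^{2i+1}\widetilde Y$, $\widetilde X^{2i}\widetilde Y^2$, $\widetilde X^{2i+1}\widetilde Y^2$ and uses the same identity $\tr((BB^t)^i)=\tr((B^tB)^i)$), so on that part the two arguments coincide.

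Where your proof differs is in obtaining the block shape \eqref{form-of-nc-atoms-general}. The paper outsources this step: it invokes \cite[Proposition~5.1]{BZ18} (noting that the $n=2$ statement there generalizes) to obtain atoms of the form $\widetilde X=\left(\begin{smallmatrix}\gamma I_t & B\\ B^t & -\gamma I_t\end{smallmatrix}\right)$, $\widetilde Y=\left(\begin{smallmatrix}\mu I_t & 0\\ 0 & -\mu I_t\end{smallmatrix}\right)$ with square $B$, and then observes that $XY+YX=0$ forces $\gamma=0$. You instead give a self-contained spectral argument directly from the anticommutation relation: eigenspace decomposition of $Y$, the observation that $X(E_\mu)\subseteq E_{-\mu}$, splitting off $\ker Y$ as commutative atoms, and an SVD step to square up the off-diagonal block when $\dim E_\mu\neq\dim E_{-\mu}$. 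Your careful bookkeeping (orthogonal conjugation preserves traces; splitting a block-diagonal atom with rescaled densities $\lambda s_j/s$ preserves all moments) is exactly what is needed to justify that the reshaped family still represents $\beta$. This route is more elementary and avoids the external reference, at the cost of being longer; the paper's route is terser but relies on \cite{BZ18} having already handled the reduction to even-sized atoms with square $B$.
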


\begin{proof}  
Since $\mc{M}_{n}$ satisfies $\bX\bY+\bY\bX=\mbf{0}$, by \cite[Proposition 5.1]{BZ18} there exists a measure in which every nc atom is of the form
$(\widetilde{X},\widetilde{Y})\in (\SSS\RR^{2t\times 2t})^{2}$, $t\in \NN$, 
is of the form 	
	\begin{equation*} 
		\widetilde{X}=\left(\begin{matrix} \gamma I_t & B \\ B^t & -\gamma I_t \end{matrix}\right),
		\quad \widetilde{Y}=\left(\begin{matrix} \mu I_t & \mbf 0_t\\ \mbf 0_t & -\mu I_t \end{matrix}\right),
	\end{equation*}
	where $B\in \RR^{t\times t}$, $\gamma\geq 0$, $\mu>0$ (note that \cite[Proposition 5.1]{BZ18} is stated for the case $n=2$, but the proof easily generalizes to $n\in\NN$). 
Moreover, the relation $\bX\bY+\bY\bX=\mbf{0}$ implies that $\gamma=0$ and hence the atoms are of the form \eqref{form-of-nc-atoms-general}.
Let $B_{1} = (BB^{t})$, and $B_{2}=(B^{t}B)$. The following calculations are elementary: 
\begin{gather*}
\widetilde{X}^{2i}=\left(\begin{matrix} B_{1}^i & \mbf 0 \\ \mbf 0 & B_{2}^i \end{matrix}\right),\quad\qquad
\widetilde{X}^{2i}\widetilde{Y}=\left(\begin{matrix} \mu B_{1}^i  & \mbf 0 \\ \mbf 0 & -\mu B_{2}^i	 
\end{matrix}\right),\quad\qquad
 \widetilde{X}^{2i}\widetilde{Y}^2=\left(\begin{matrix} \mu^2 B_{1}^i  & \mbf 0 \\ \mbf 0 & \mu^2 B_{2}^i	 
 \end{matrix}\right),\\
\widetilde{X}^{2i+1}=\left(\begin{matrix} \mbf 0 &  B_{1}^{i} B \\ B_{2}^{i} B^t & \mbf 0
\end{matrix}\right),\quad
\widetilde{X}^{2i+1}\widetilde{Y}=\left(\begin{matrix} \mbf 0 & -\mu B_{1}^{i} B \\ \mu B_{2}^{i-1} B^t & \mbf 0
\end{matrix}\right),\quad
 \widetilde{X}^{2i+1}\widetilde{Y}^2=\left(\begin{matrix} \mbf 0 & \mu^2 B_{1}^{i} B \\ \mu^2 B_{2}^{i} B^t & \mbf 0
\end{matrix}\right).
\end{gather*}

The properties \eqref{moments-general-pt1}-\eqref{moments-general-pt4} are now easy to check, using 
	$$\tr((BB^t)^i)=\tr(B(B^tB)^{i-1}B^t)=\tr((B^tB)^{i-1}B^tB)=\tr((B^tB)^i),$$
where the second equality follows from $\tr(CD)=\tr(DC)$, with $C=B$ and $D=(B^tB)^{i-1}B^t$.
\end{proof}


\begin{lemma}\label{zero-moment-general2}
	Suppose that $\cM_n$ is in the canonical form.
If $\beta$ admits a nc measure, then: 
\begin{enumerate}
	\item\label{moments-cgen-pt1} $\beta_{X^{2i+1}}=\beta_{X}$ for every $i\in \NN$ such that $2i+1\leq 2n$.
	\item\label{moments-cgen-pt2} $\beta_{X^jY}=0$ for every $j\in \NN$ such that $j+1\leq 2n$.
	\item\label{moments-cgen-pt4} $\beta_{X^kY^2}=0$ 
		for every odd $k\in \NN$.
	\item\label{moments-cgen-pt3} When the second relation is:
	\begin{enumerate}
	\item\label{moments-c1-pt3} $\bY^2=\mds 1-\bX^2$, then:  $$\beta_{X^kY^2}=\beta_{X^k}-\beta_{X^{k+2}}\quad \text{for every}\quad k\in \NN\quad \text{such that}\quad k+2\leq 2n.$$
	\item\label{moments-c2/3-pt1} $\bY^2=\mds 1$ or $\bY^2=\mds 1+\bX^2$, then we have that $\beta_{X}=0.$
	\item\label{moments-c2-pt3} $\bY^2=\mds 1$, then:
		$$\beta_{X^kY^2}=\beta_{X^k}\quad \text{for every}\quad k\in \NN\quad \text{such that}\quad k+2\leq 2n.$$
	\item\label{moments--c3-pt3} $\bY^2=\mds 1+\bX^2$, then:  
		$$\beta_{X^kY^2}=\beta_{X^k}+\beta_{X^{k+2}}\quad \text{for every}\quad k\in \NN\quad \text{such that}\quad k+2\leq 2n.$$
	\item\label{moments--c4-pt3} $\bY^2=\bX^2$, then:  
		$$\beta_{X^kY^2}=\beta_{X^{k+2}}\quad \text{for every}\quad k\in \NN\quad \text{such that}\quad k+2\leq 2n.$$
	\end{enumerate}
\end{enumerate}
\end{lemma}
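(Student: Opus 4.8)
The plan is to run the whole argument through a single representing measure of the convenient shape provided by Lemma~\ref{zero-moment-general}. Since $\cM_n$ is in canonical form it satisfies $\bX\bY+\bY\bX=\mbf 0$, so I may fix a measure whose nc atoms $(\widetilde X_j,\widetilde Y_j)$ have the block form \eqref{form-of-nc-atoms-general} with densities $\nu_j$, and whose remaining atoms are size-$1$ points $(x_i,y_i)\in\RR^2$ with densities $\mu_i$. By Theorem~\ref{support lemma}\eqref{point-1-support} every atom lies in the free zero set of each column relation. This yields the two facts that drive everything: each size-$1$ atom satisfies $2x_iy_i=0$, so $x_i=0$ or $y_i=0$; and each atom $(X,Y)$ satisfies the matrix identity $Y^2=f(X)$ coming from the second relation, where $f(X)$ is one of $\mds 1-X^2$, $\mds 1$, $\mds 1+X^2$, $X^2$.

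For parts \eqref{moments-cgen-pt2} and \eqref{moments-cgen-pt4} I would show both atom types contribute zero. The nc atoms contribute nothing by Lemma~\ref{zero-moment-general}\eqref{moments-general-pt2} and \eqref{moments-general-pt4}. For a size-$1$ atom the contributions are $x_i^{\,j}y_i$ and $x_i^{\,k}y_i^2$; since $j,k\ge 1$, the case $x_i=0$ kills $x_i^{\,j}$ (resp.\ $x_i^{\,k}$) and the case $y_i=0$ kills $y_i$ (resp.\ $y_i^2$). Summing against the densities gives $\beta_{X^jY}=0$ and, for odd $k$, $\beta_{X^kY^2}=0$. Part \eqref{moments-cgen-pt3} is the cleanest and needs no split on atom type: left-multiplying $Y^2=f(X)$ by $X^k$ and taking normalized traces gives $\Tr(X^kY^2)=\Tr(X^kf(X))$ atom by atom, and summing against the densities produces exactly the identities \eqref{moments-c1-pt3}, \eqref{moments-c2-pt3}, \eqref{moments--c3-pt3}, \eqref{moments--c4-pt3}, with right-hand sides $\beta_{X^k}-\beta_{X^{k+2}}$, $\beta_{X^k}$, $\beta_{X^k}+\beta_{X^{k+2}}$ and $\beta_{X^{k+2}}$, valid for all $k$ with $k+2\le 2n$.

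The subtle part, which I expect to be the main obstacle, is the pair \eqref{moments-cgen-pt1} and \eqref{moments-c2/3-pt1}, since here the specific second relation must be used to control the scalars $x_i$. Both $\beta_{X^{2\ell+1}}$ and $\beta_X$ involve odd powers of $X$, so the nc atoms again contribute zero (their $\widetilde X_j$ have vanishing diagonal blocks, whence $\Tr(\widetilde X_j^{\,m})=0$ for odd $m$, the $m=1$ case being immediate from the block form in Lemma~\ref{zero-moment-general}). A size-$1$ atom contributes to an odd moment only when $x_i\ne 0$, which forces $y_i=0$; evaluating the commutative collapse of the second relation at such a point gives $0=f(x_i)$. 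For $\bY^2=\mds 1$ and $\bY^2=\mds 1+\bX^2$ this reads $0=1$ and $0=1+x_i^2$, both impossible, so no size-$1$ atom has $x_i\ne0$; hence $\beta_{X^m}=0$ for all $m\ge1$, giving $\beta_X=0$ (this is \eqref{moments-c2/3-pt1}) and $\beta_{X^{2\ell+1}}=0=\beta_X$. The same forcing occurs for $\bY^2=\bX^2$, where $0=x_i^2$ again gives $x_i=0$. Finally, for $\bY^2=\mds 1-\bX^2$ the equation $0=1-x_i^2$ gives $x_i^2=1$, so $x_i^{2\ell+1}=(x_i^2)^\ell x_i=x_i$ for every contributing atom, and summing yields $\beta_{X^{2\ell+1}}=\sum_i\mu_i x_i^{2\ell+1}=\sum_i\mu_i x_i=\beta_X$. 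This settles \eqref{moments-cgen-pt1} in all four cases and completes the argument; the only genuine content is this last case analysis, everything else being linearity of the trace combined with Lemma~\ref{zero-moment-general}.
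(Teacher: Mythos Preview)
Your argument is correct and follows essentially the same route as the paper: fix a representing measure with nc atoms in the block form of Lemma~\ref{zero-moment-general}, use Theorem~\ref{support lemma}\eqref{point-1-support} to pin down the admissible size-$1$ atoms in each of the four cases, and then verify the moment identities atom by atom. Your treatment of part~\eqref{moments-cgen-pt3} is marginally cleaner than the paper's, since you obtain all five identities uniformly from $\Tr(X^kY^2)=\Tr(X^kf(X))$ without splitting into even and odd $k$, whereas the paper handles odd $k$ by reducing to the already-proved parts~\eqref{moments-cgen-pt1} and~\eqref{moments-cgen-pt4}; but this is an organizational difference only.
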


\begin{proof} 
By Theorem \ref{support lemma} \eqref{point-1-support} possible cm atoms in the measure for $\beta$ are:
  \begin{enumerate}
    \item If $\bY^2=\mds 1-\bX^2$: $(1, 0)$, $(-1,0)$, $(0,1)$, $(0,-1)$. 
    \item If $\bY^2=\mds 1$:   $(0,1)$, $(0,-1)$.
    \item If $\bY^2=\mds 1+\bX^2$: $(0,1)$, $(0,-1)$.
    \item If $\bY^2=\bX^2$: $(0, 0)$.
  \end{enumerate}
It is easy to check that the moment matrices $\mc M_n^{(x,y)}$, generated by possible cm atoms  $(x,y)\in \RR^2$, 
satisfy the corresponding relations stated in the lemma. It remains to prove that the nc atoms also satisfy them.
By Lemma \ref{zero-moment-general2} there exist a measure such that in all cases the nc atoms $(\widetilde{X},\widetilde{Y})$ are of the form \eqref{form-of-nc-atoms-general} and satisfy 
\eqref{moments-cgen-pt1}, \eqref{moments-cgen-pt2} and \eqref{moments-cgen-pt4}.
The statement \eqref{moments-c1-pt3} for odd $k\in \NN$ follows by using \eqref{moments-cgen-pt1} and \eqref{moments-cgen-pt4}, while for even $k\in \NN$ it follows
by the following calculation
$$
\beta^{(\widetilde{X},\widetilde{Y})}_{X^{2i}Y^2}= \tr(\widetilde{X}^{2i}\widetilde{Y}^{2})  = \tr(\widetilde{X}^{2i}(I_{2t} - \widetilde{X}^{2})) = \tr(\widetilde{X}^{2i}-\widetilde{X}^{2i+2}) = \tr(\widetilde{X}^{2i}) - \tr(\widetilde{X}^{2i+2}) = \beta^{(\widetilde{X},\widetilde{Y})}_{X^{2i}}-\beta^{(\widetilde{X},\widetilde{Y})}_{X^{2i+2}}
$$
where we used that $\widetilde{Y}^2=I_{2t}-\widetilde{X}^2$ for the second equality.
The statement \eqref{moments-c2/3-pt1} is clear for the nc atoms.
The statement \eqref{moments-c2-pt3} follows by $\widetilde{X}^k\widetilde{Y}^2=\widetilde{X}^{k}$, since $\widetilde{Y}^2=I_{2t}$. 
The statement \eqref{moments--c3-pt3} for odd $k\in \NN$ follows by using \eqref{moments-cgen-pt1}, \eqref{moments-cgen-pt4} and \eqref{moments-c2/3-pt1}, while for even $k\in \NN$ it follows
by the following calculation 
$$
\beta^{(\widetilde{X},\widetilde{Y})}_{X^{2i}Y^2}= \tr(\widetilde{X}^{2i}\widetilde{Y}^{2})  = \tr(\widetilde{X}^{2i}(I_{2t}+ \widetilde{X}^{2})) = \tr(\widetilde{X}^{2i}+\widetilde{X}^{2i+2}) = \tr(\widetilde{X}^{2i})+ \tr(\widetilde{X}^{2i+2}) = \beta^{(\widetilde{X},\widetilde{Y})}_{X^{2i}}+\beta^{(\widetilde{X},\widetilde{Y})}_{X^{2i+2}}
$$
where we used that $\widetilde{Y}^2=I_{2t}+\widetilde{X}^2$ for the second equality.
The statement \eqref{moments--c4-pt3} follows by $\widetilde{X}^k\widetilde{Y}^2=\widetilde{X}^{k+2}$, since $\widetilde{Y}^2=\widetilde{X}^2$. 
This proves the lemma.
\end{proof}

\begin{proof}[Proof of Theorem \ref{M_n-XY+YX=0-all-in-one}]
We can assume WLOG that $\mc{M}_n$ is in the canonical form since the moment matrix admits a measure
if and only if its canonical form admits a measure. 
We rearrange the columns of $\mc{M}_n$ to the order 
	$$\{\mds 1,\bX,\bX^2,\ldots,\bX^n,\bY,\bX\bY,\bX^2\bY,\ldots,\bX^{n-1}\bY\}.$$
The rearranged moment matrix has the form
	\begin{equation}\label{M-c1}
		\widetilde{\cM_n}(\beta_1,\beta_X,\beta_Y):=\left(\begin{matrix}
		\cM_n(\beta_1,\beta_X,X)& B(\beta_Y)\\
		B(\beta_Y) & \cM_n(\beta_1, Y)
	\end{matrix}\right).
	\end{equation}
There are four cases to consider, each corresponding to a relation of \eqref{second-column-relation}. We present in detail the proof when we have relations $\bX\bY+\bY\bX=\mbf{0}$ and $\bY^2=\mbf 1-\bX^2$. The other three cases are argued similarly, and the details can be found in Appendix \ref{proof-rank5-extension}.  \\ 

Given the relations $\bX\bY+\bY\bX=\mbf{0}$ and $\bY^2=\mbf 1-\bX^2$, by Lemma \ref{zero-moment-general2}, the matrices 
$\cM_n(\beta_1,\beta_X,X)$, $\cM_n(\beta_1,Y)$ and $B(\beta_Y)$ are of the forms

\begin{equation*}
\begin{blockarray}{cccccccccccc}
& \mds{1}&\bX&\bX^2&\bX^3&\cdots&\bX^{2k}&\bX^{2k+1}&\cdots&\bX^n\\
\begin{block}{c(ccccccccccc)}
\mds{1}& \beta_1 & \beta_X & \beta_{X^2}& \beta_X & \cdots & \beta_{X^{2k}} & \beta_X & 
	\cdots & c_n \beta_{X^n}+(1-c_{n})\beta_X \\\
\bX& \beta_X & \beta_{X^2} & \beta_X & \beta_{X^4} & \cdots & \beta_X & \beta_{X^{2k+2}}& \cdots & 
	c_{n+1}\beta_{X^{n+1}}+(1-c_{n+1})\beta_X \\\
\bX^2& \beta_{X^2} & \beta_X & \beta_{X^4} &  \beta_X & \cdots  & \beta_{X^{2k+2}} & \beta_X & \cdots&
	c_n \beta_{X^{n+2}} +(1-c_{n})\beta_X \\\
\bX^3&  \beta_{X} & \beta_{X^4} & \beta_X & \beta_{X^6} & \cdots  & \beta_X & \beta_{X^{2k+4}} & \cdots & 
	c_{n+1}\beta_{X^{n+3}} +(1-c_{n+1})\beta_X \\\
\vdots& \vdots &&&&&&&&\vdots \\
\bX^n& c_n\beta_{X^n} +(1-c_{n})  \beta_X &\cdots &\cdots&\cdots&\cdots&\cdots&\cdots&\cdots&
	\beta_{X^{2n}}\\
\end{block}
\end{blockarray},
\end{equation*}
\begin{equation*}
\begin{blockarray}{cccccccccc}
& \bY& \bX\bY&\cdots&
\bX^{2k}\bY&\bX^{2k+1}\bY&\cdots&\bX^{n-1}\bY\\
\begin{block}{c(ccccccccc)}
\bY& \beta_1-\beta_{X^2} & 0  & \cdots & \beta_{X^{2k}}-\beta_{X^{2k+2}} & 0 & 
	\cdots & \cdots\\
\bX\bY& 0 & \beta_{X^2}-\beta_{X^4}  & \cdots & 0 & \beta_{X^{2k+2}}-\beta_{X^{2k+4}}& \cdots & 
	\vdots\\
\vdots& \vdots &&&&&&\vdots \\
\bX^{2k}\bY& \beta_{X^{2k}} - \beta_{X^{2k+2}}&0 &\cdots  & \beta_{X^{4k}}-\beta_{X^{4k+2}} & 0 & \cdots&
	\vdots\\
\bX^{2k+1}\bY & 0 & \beta_{X^{2k+2}}-\beta_{X^{2k+4}} &\cdots & 0 & \beta_{X^{4k+2}}-\beta_{X^{4k+4}} & \cdots & 
	\vdots\\
\vdots& \vdots &&&&&&\vdots \\
\bX^{n-1}\bY& \cdots&\cdots&\cdots&\cdots&\cdots&\cdots&\cdots \\
\end{block}
\end{blockarray}
\end{equation*}
and
\begin{equation}\label{B(beta-y)}
B(\beta_Y):=
\begin{blockarray}{cccccc}
& \bY& \bX\bY&\bX^2\bY&\cdots&\bX^{n-1}\bY\\
\begin{block}{c(ccccc)}
\mds 1& \beta_Y & 0 & 0 & \cdots & 0\\
\bX& 0 & 0 & 0 & \cdots & 0\\
\vdots& \vdots &&&&\vdots\\
\bX^n& 0 & 0 & 0 & \cdots & 0\\
\end{block}
\end{blockarray}, 
\end{equation}
respectively, 
where $c_m=\frac{(-1)^{m}+1}{2}$.
By Lemma \ref{zero-moment-general} the nc atoms must be of the form
\eqref{form-of-nc-atoms-general}.
Hence the only  way 
to cancel the odd moment, $\beta_{X}$ in $\cM_n(\beta_1,\beta_X,X)$ 
and $\beta_Y$ moment in 
$B(\beta_Y)$, is by using atoms of size 1, which are $(\pm 1, 0)$ and $(0,\pm 1)$.\\

\noindent \textbf{Claim.}
We have that
	$$|\beta_X|\widetilde{\cM}_{n}^{(\sign(\beta_X)1,0)}+|\beta_Y|\widetilde{\cM}_{n}^{(0,\sign(\beta_Y)1)} 
	\preceq 
	\gamma_1\widetilde{\cM}_{n}^{(1,0)}+
	\gamma_2\widetilde{\cM}_{n}^{(-1,0)}+
	\delta_1\widetilde{\cM}_{n}^{(0,1)}+
	\delta_2\widetilde{\cM}_{n}^{(0,-1)}$$
for every $\gamma_1,\gamma_2,\delta_1,\delta_2\geq 0$ such that $\gamma_1-\gamma_2=\beta_X$
and $\delta_1-\delta_2=\beta_Y$.\\

We consider four cases depending on the signs of $\beta_X$ and $\beta_Y$. 
If $\beta_X\geq 0$, then $\sign(\beta_X)1=1$ and hence $\gamma_1\geq \beta_X.$
Else $\beta_X< 0$, $\sign(\beta_X)1=-1$ and hence $\gamma_2\geq |\beta_X|.$
Thus, 
  \begin{equation} \label{ineq1-c1}
    |\beta_X|\widetilde{\cM}_{n}^{(\sign(\beta_X)1,0)} \preceq \gamma_1 \widetilde{\cM}_{n}^{(1,0)}+
     \gamma_2 \widetilde{\cM}_{n}^{(-1,0)}.
  \end{equation}
Similarly,
  \begin{equation} \label{ineq2-c1}
	|\beta_Y|\widetilde{\cM}_{n}^{(0,\sign(\beta_Y)1)} \preceq \delta_1 \widetilde{\cM}_{n}^{(0,1)}+
 	\delta_2 \widetilde{\cM}_{n}^{(0,-1)}.
  \end{equation}
Now, \eqref{ineq1-c1} and \eqref{ineq2-c1} imply the claim.\\

By the claim it follows that $\Big(|\beta_X|\widetilde{\cM}_{n}^{(\sign(\beta_X)1,0)}+|\beta_Y|\widetilde{\cM}_{n}^{(0,\sign(\beta_Y)1)} \Big)$ is the smallest matrix (under the L{\"o}wner partial ordering) such that, $\widetilde{\cM_n}(\beta_1,\beta_X,\beta_Y)$ admits a measure if and only if 
	$$\widetilde{\cM_n}(\beta_1-\beta_X-\beta_Y,0,0)=
	\widetilde{\cM_n}(\beta_1,\beta_X,\beta_Y)-
	\Big(|\beta_X|\widetilde{\cM}_{n}^{(\sign(\beta_X)1,0)}+
	|\beta_Y|\widetilde{\cM}_{n}^{(0,\sign(\beta_Y)1)}\Big)$$
	admits a measure.
Now observe that the existence of a measure for $\cM_n(\beta_1-\beta_X-\beta_Y,0,X)$, i.e.,
\begin{equation*}
\begin{blockarray}{ccccccccc}
& \mds{1}&\bX&\bX^2&\bX^3&\cdots&\bX^n\\
\begin{block}{c(cccccccc)}
\mds{1}& \beta_1-|\beta_X|-|\beta_Y| & 0 & \beta_{X^2}-|\beta_X|& 0 & 
	\cdots & c_n (\beta_{X^n}-|\beta_X|)  \\\
\bX& 0 & \beta_{X^2}-|\beta_X| & 0 & \beta_{X^4}-|\beta_X| & \cdots & 
	c_{n+1}(\beta_{X^{n+1}}-|\beta_X|)  \\\
\bX^2& \beta_{X^2}-|\beta_X| & 0 & \beta_{X^4}-|\beta_X| &  0 &  \cdots&
	c_n (\beta_{X^{n+2}}-|\beta_X|)  \\\
\bX^3& 0 & \beta_{X^4}-|\beta_X| & 0 & \beta_{X^6}-|\beta_X| & \cdots & 
	c_{n+1}(\beta_{X^{n+3}}-|\beta_X|)  \\\
\vdots& \vdots &&&&&\vdots \\
\bX^n& c_n(\beta_{X^n}-|\beta_X|)  &\cdots &\cdots&\cdots&\cdots&
	\beta_{X^{2n}}-|\beta_X|\\
\end{block}
\end{blockarray},
\end{equation*}
with support a subset of $[-1,1]$ is the truncated Hausdorf moment problem. 
Hence by \cite[Theorem III.2.3]{KN77}, the matrix $\cM_n(\beta_1-\beta_X-\beta_Y,0,X)$
admits a measure if and only if it is psd and 
\begin{equation*}
\begin{blockarray}{cccccccccc}
& \bY& \bX\bY&\cdots&
\bX^{2k}\bY&\bX^{2k+1}\bY&\cdots&\bX^{n-1}\bY\\
\begin{block}{c(ccccccccc)}
\bY& \beta_1-\beta_{X^2}-|\beta_Y| & 0  & \cdots & \beta_{X^{2k}}-\beta_{X^{2k+2}} & 0 & 
	\cdots & \cdots\\
\bX\bY& 0 & \beta_{X^2}-\beta_{X^4}  & \cdots & 0 & \beta_{X^{2k+2}}-\beta_{X^{2k+4}}& \cdots & 
	\vdots\\
\vdots& \vdots &&&&&&\vdots \\
\bX^{2k}\bY& \beta_{X^{2k}} - \beta_{X^{2k+2}}&0 &\cdots  & \beta_{X^{4k}}-\beta_{X^{4k+2}} & 0 & \cdots&
	\vdots\\
\bX^{2k+1}\bY & 0 & \beta_{X^{2k+2}}-\beta_{X^{2k+4}} &\cdots & 0 & \beta_{X^{4k+2}}-\beta_{X^{4k+4}} & \cdots & 
	\vdots\\
\vdots& \vdots &&&&&&\vdots \\
\bX^{n-1}\bY& \cdots&\cdots&\cdots&\cdots&\cdots&\cdots&\cdots \\
\end{block}
\end{blockarray},
\end{equation*}
which is exactly 
$\cM_n(\beta_1-\beta_Y,Y)$, 
is psd.
Now note that if $x_i$, $i=1,\ldots, k$, $k\in \NN$, are atoms in the measure for 
$\cM_n(\beta_1-\beta_X-\beta_Y,0,X)$ with the corresponding densities $\mu_i$,  $i=1,\ldots, k$, then
	$$
	\Big(\left(\begin{matrix}
	0&x_i\\
	x_i&0
	\end{matrix}\right),
	\left(\begin{matrix}
	\sqrt{1-x_i^2}&0\\
	0&-\sqrt{1-x_i^2}
	\end{matrix}\right)
	\Big),     \quad  i=1,\ldots,k,
	$$
with densities $\mu_i$, $i=1,\ldots,k$, are atoms which represent $\widetilde{\cM_n}(\beta_1-\beta_X-\beta_Y,0,0)$. 
\end{proof}

\section{Reducing the degenerate truncated hyperbolic moment problem}
\label{S4}

Prompted by the outcomes of the previous section (proof of Theorem \ref{M_n-XY+YX=0-all-in-one}), we use the reduction technique to present a simplified proof one of the main results in \cite{CF05}, the degenerate truncated hyperbolic moment problem, i.e., when $\mc{M}_{n}$ is commutative and satisfies $\bX\bY = \mbf{0}$. 

\begin{remark}
	 \ab{Curto and Fialkow have previously used the reduction technique for the complex moment problem when $Z = \bar{Z}$, and shown how the truncated complex moment problem with this column relation is equivalent to the truncated Hamburger moment problem (see the discussion after \cite[Conjecture 3.16]{CF96}).}
\end{remark}


\begin{theorem}\cite[Theorem 3.1]{CF05} \label{XY=0-cm}
	Let $\cM_n$ be a moment matrix satisfying the relations $\bX\bY=\bY\bX=\mbf 0$. 
	If $\cM_n$ is positive semidefinite, recursively generated and satisfies 
	$\Rank(\cM_n) \leq \Card (\mathcal V)$, where 
		 	$$\mathcal V:=\bigcap_{
			\substack{g\in \RR[X,Y]_{\leq 2},\\ 
						g(\bX,\bY)=\mbf 0\;\text{in}\;\cM_n}}
				\left\{ (x,y)\in \RR^2\colon g(x,y)=0 \right\},$$ 
	then it admits a representing measure.
	Moreover, if $\Rank(\cM_n)\leq 2n$, then $\mc M_n$ admits a $(\Rank(\cM_n))$-atomic measure, and
	if $\Rank(\cM_n)=2n+1$, then $\mc M_n$ admits a $(2n+1)$- or $(2n+2)$-atomic measure.
\end{theorem}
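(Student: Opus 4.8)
The plan is to carry out the reduction of Section~\ref{S3}: since the support of any representing measure must lie in $\{xy=0\}$, the bivariate problem decouples into two univariate (Hamburger) moment problems, one on each coordinate axis, coupled only through the total mass $\beta_1$. First I would record that, because $\mc M_n$ satisfies $\bX\bY=\bY\bX=\mbf 0$ and is recursively generated, Theorem~\ref{support lemma}~(\ref{point-3-support}) forces every column indexed by a monomial containing both $X$ and $Y$ to vanish, so $\mathcal C_{\mc M_n}=\Span\{\mds 1,\bX,\dots,\bX^n,\bY,\dots,\bY^n\}$. Reordering rows and columns into $\{\mds 1,\bX,\dots,\bX^n,\bY,\dots,\bY^n\}$ puts $\mc M_n$ into the arrowhead block form
$$
\widetilde{\mc M}_n=\begin{pmatrix} \beta_1 & a^{t} & b^{t}\\ a & A & 0 \\ b & 0 & B\end{pmatrix},
$$
where $A=(\beta_{X^{i+j}})_{i,j=1}^{n}$ and $B=(\beta_{Y^{i+j}})_{i,j=1}^{n}$ are the $x$- and $y$-Hankel blocks, $a=(\beta_X,\dots,\beta_{X^n})^{t}$ and $b=(\beta_Y,\dots,\beta_{Y^n})^{t}$.

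Next I would analyze positivity and introduce the mass split. From $\widetilde{\mc M}_n\succeq 0$ the range conditions $a\in\Ran A$ and $b\in\Ran B$ hold; writing $a=Aw_x$, $b=Bw_y$ and completing the square in the quadratic form of $\widetilde{\mc M}_n$ (the invertible shift $(t,u,v)\mapsto(t,u+tw_x,v+tw_y)$ block-diagonalizes it), I would show that $\widetilde{\mc M}_n\succeq 0$ is equivalent to $\beta_1\ge\rho_x+\rho_y$, where $\rho_x=a^{t}A^{+}a$ and $\rho_y=b^{t}B^{+}b$, and that $\Rank\widetilde{\mc M}_n=\Rank A+\Rank B+\epsilon$ with $\epsilon=1$ if $\beta_1>\rho_x+\rho_y$ and $\epsilon=0$ otherwise. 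I then introduce the free parameter via $\beta_1=\gamma_x+\gamma_y$ and form the order-$n$ Hankel matrices $M_x=\left(\begin{smallmatrix}\gamma_x & a^{t}\\ a & A\end{smallmatrix}\right)$ and $M_y=\left(\begin{smallmatrix}\gamma_y & b^{t}\\ b & B\end{smallmatrix}\right)$, noting that a split with $M_x,M_y\succeq 0$ exists precisely when $\gamma_x\ge\rho_x$ and $\gamma_y\ge\rho_y$ can be simultaneously met, i.e. exactly when $\beta_1\ge\rho_x+\rho_y$.

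Now $M_x$ and $M_y$ are classical truncated Hamburger matrices, and I would verify that each inherits recursive generation and a rank-cardinality bound from $\mc M_n$, using that $\mathcal V\subseteq\{xy=0\}$ decomposes along the axes, so that $\Card\mathcal V=\Card(\mathcal V\cap\{y=0\})+\Card(\mathcal V\cap\{x=0\})-[(0,0)\in\mathcal V]$, and correspondingly $\mathcal V\cap\{y=0\}$ is controlled by the pure-$x$ relations (hence by $M_x$) and $\mathcal V\cap\{x=0\}$ by $M_y$. By the classical solution of the univariate truncated moment problem (\cite[Theorem III.2.3]{KN77} and Curto--Fialkow), each piece then admits a representing measure with $\Rank M_\bullet$ atoms. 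Transplanting $\mu_x$ onto the $x$-axis and $\mu_y$ onto the $y$-axis and setting $\mu=\mu_x+\mu_y$ produces a measure supported on $\{xy=0\}$ whose moments reproduce $a$, $b$, $A$, $B$ and $\beta_1=\gamma_x+\gamma_y$, so $\mu$ represents $\beta$.

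Finally I would choose the split to steer the atom count. When $\Rank\mc M_n\le 2n$, I put all the slack $\beta_1-\rho_x-\rho_y$ on a single axis (e.g. $\gamma_x=\rho_x$, $\gamma_y=\beta_1-\rho_x$), making one piece flat so the total is $\Rank A+\Rank B+\epsilon=\Rank\mc M_n$ atoms; an overlap at the origin is impossible here, as it would yield fewer than $\Rank\mc M_n$ atoms, contradicting the rank lower bound. When $\Rank\mc M_n=2n+1$, both $A$ and $B$ have full rank $n$ and $\beta_1>\rho_x+\rho_y$, so every valid split makes $M_x$ and $M_y$ positive definite and each contributes $n+1$ atoms; using a Gauss--Radau (prescribed-node) quadrature to place an atom of each piece at the origin lets the two origin atoms coincide, giving $2n+1$ atoms, and otherwise $2n+2$. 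I expect the main obstacle to be the third step: reconciling the nonstandard flatness $\Rank M_x=\Rank A$ produced by the splitting with the standard flatness and rank-cardinality hypotheses demanded by the univariate existence theorem, and coordinating the single free parameter so that both univariate pieces are simultaneously solvable while the atom totals land exactly on $\Rank\mc M_n$ (respectively $2n+1$ or $2n+2$).
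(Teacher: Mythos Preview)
Your overall strategy coincides with the paper's: reorder to the arrowhead block form $\begin{pmatrix}\beta_1&a^t&b^t\\a&A&0\\b&0&B\end{pmatrix}$, split $\beta_1=\gamma_x+\gamma_y$, and solve the two resulting univariate Hamburger problems. The case distinction $\Rank\mc M_n=2n+1$ versus $\Rank\mc M_n\le 2n$ is also the same, and in the full-rank case your choice $\gamma_x=\rho_x+\eta/2$ is exactly what the paper does to get two positive-definite pieces and hence at most $2n+2$ atoms.

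The obstacle you flag at the end is a real gap, and resolving it is where the hypothesis $\Rank\mc M_n\le\Card\mathcal V$ is actually used. When you set $\gamma_x=\rho_x$ with $\Rank A=n$, the matrix $M_x=\begin{pmatrix}\rho_x&a^t\\a&A\end{pmatrix}$ has rank $n$, but the column relation it satisfies is $\mathds 1=\sum_{i\ge 1}\delta_i\bX^i$, not $\bX^n\in\Span\{\mathds 1,\dots,\bX^{n-1}\}$, so it need \emph{not} be recursively generated and \cite[Theorem~3.9]{CF91} does not apply directly. The paper's argument here is: if $M_x$ is not RG, its unique relation is of the form $\mathds 1+\sum_{i=1}^{i_0}\delta_i\bX^i=\mbf 0$ with $i_0\le n$; combining this with the analogous relation $\mathds 1+\sum_{j=1}^{k_2}\zeta_j\bY^j=\mbf 0$ from $M_y$ and invoking \cite[Proposition~3.9]{CF96} forces the relation $\mathds 1+\sum\delta_i\bX^i+\sum\zeta_j\bY^j=\mbf 0$ to hold in $\mc M_n$, whence $\Card\mathcal V\le(i_0-1)+k_2<n+k_2=\Rank\mc M_n$, contradicting the hypothesis. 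This is the step your plan is missing, and it is also why your choice of which side to make flat matters: when exactly one of $\Rank A,\Rank B$ is below $n$, you must flatten \emph{that} side, since then RG of $\mc M_n$ directly produces a relation of the form $\bX^{k_1+1}=\sum_{i\le k_1}\gamma_i\bX^i$ with $\gamma_1\neq 0$, which propagates to $M_x$.

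Your Gauss--Radau claim in the rank-$(2n+1)$ case overreaches. Forcing an atom of $M_x$ at $0$ while keeping the total at $n+1$ atoms amounts to requiring that $\begin{pmatrix}\rho_x&a^t\\a&A\end{pmatrix}$ admit an $n$-atomic measure, i.e.\ that it be RG; this is not automatic when $A$ has full rank. The paper does not claim a $(2n+1)$-atomic measure always exists---it states ``$(2n+1)$ or $(2n+2)$'' and, in the remark following the proof, characterizes the $(2n+1)$ case precisely as one of $\begin{pmatrix}\rho_x&a^t\\a&A\end{pmatrix}$ or $\begin{pmatrix}\rho_y&b^t\\b&B\end{pmatrix}$ being RG. So drop the Gauss--Radau step and simply conclude $2n+2$ atoms with a possible coincidence at the origin.
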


\begin{proof} 
Note that the basis for $\mathcal C_{\mc M_n}$ is a subset of $\{\mds 1, \bX,\ldots,\bX^n,\bY,\ldots,\bY^n\}$.
Reordering the columns to
$$\mds 1,\bX,\bX^2,\ldots,\bX^n,\bY,\ldots,\bY^n,\bX\bY,\ldots, \bX\bY^{n-1},\bX^2\bY,\ldots,\bX^2\bY^{n-2},\ldots,\bX^{n-1}\bY,$$
we have that $\mc M_n=M\oplus \mbf{0}$ where 
	$$M=
\begin{mpmatrix} 
1 & a^t & b^t  \\
a & A & \mbf{0} \\
b & \mbf{0} & B \\
 \end{mpmatrix}, \quad
A=\begin{mpmatrix} 
\beta_{X^2} & \ldots & \beta_{X^{n+1}}\\
\vdots & \ddots & \vdots\\
\beta_{X^{n+1}}& \cdots & \beta_{X^{2n}}
\end{mpmatrix},\quad
B=\begin{mpmatrix} 
\beta_{Y^2} & \ldots & \beta_{Y^{n+1}}\\
\vdots & \ddots & \vdots\\
\beta_{Y^{n+1}}& \cdots & \beta_{Y^{2n}}
\end{mpmatrix},\quad
a=\begin{mpmatrix} \beta_X \\ \vdots \\ \beta_{X^n}\end{mpmatrix},\quad
b=\begin{mpmatrix} \beta_Y \\ \vdots \\ \beta_{Y^n}\end{mpmatrix}.
$$
We separate two cases according to the rank of $\mc M_n$.\\

\noindent\textbf{Case 1:} $\Rank(\cM_n)=2n+1.$ 
From $M\succ 0$ it follows that the Schur complement  $\eta:=1-a^t A^{-1} a -b^t B^{-1}b>0$ of the block $A\oplus B$ is positive.
For $\alpha:=a^tA^{-1}a+\frac{\eta}{2}$ we have that
$1-\alpha=b^tB^{-1}b+\frac{\eta}{2}$ and 
$\begin{mpmatrix} \alpha & a^t \\a & A \end{mpmatrix}$, 
$\begin{mpmatrix} 1-\alpha & b^t \\b & B\end{mpmatrix}$ are both positive definite.
By \cite[Theorem 3.9]{CF91} they admit a measure consisting of $n+1$ atoms $x_0,\ldots,x_n$ and $y_0,\ldots,y_n$, respectively.
So $\mc M_n$ admits a measure consisting of at most $2n+2$ atoms $(x_0,0),\ldots,(x_n,0), (0,y_0),\ldots,(0,y_n)$, with only one potential duplication, namely $(x_i,0)=(0,y_j)=(0,0)$ for some $i,j$. \\

\noindent\textbf{Case 2:} $\Rank(\cM_n)\leq 2n.$
Let $k_1:=\Rank A=\Rank A_{k_1}$ and $k_2:=\Rank B=\Rank B_{k_2}$, where $A_{k_1}, B_{k_2}$ are the leading principal
submatrices of size $k_1$, $k_2$ of $A, B$, and the second equalities follow from $\mc M_n$ being RG.
We denote by $a_{k_1}$, $b_{k_2}$ the restrictions of $a, b$ to the first $k_1$, $k_2$ rows, respectively. We write 
$M_{k_1,k_2}:=\begin{mpmatrix} 1 & a_{k_1}^t & b_{k_2}^t \\ a_{k_1} & A_{k_1} & 0\\ b_{k_2} & 0 & B_{k_2} \end{mpmatrix}$.
We separate two cases according to the difference
$\left(\Rank (\mc M_n)-\Rank\begin{mpmatrix} A & \mbf 0 \\ \mbf 0 & B\end{mpmatrix}\right)$.

\noindent\textbf{Case 2.1:} $\left(\Rank (\mc M_n)-\Rank\begin{mpmatrix} A & \mbf 0 \\ \mbf 0 & B\end{mpmatrix}\right)=1$.
We have $k_1<n$ or $k_2<n$. We may assume WLOG that $k_1<n$.
In $A$ we have 
$\bX^{k_1+1}=\sum_{i=1}^{k_1}\gamma_i \bX^i$ for some $\gamma_i\in \RR$.
By \cite[Proposition 3.9]{CF96}, $\bX^{k_1+1}=\sum_{i=1}^{k_1}\gamma_i \bX^i$ holds also in $\mc M_n$.
Hence $\gamma_1\neq 0$, since otherwise 
$A_{k_1}=[\mc M_n]_{\{\bX,\ldots,\bX^{k_1}\}}$
is singular, which contradicts $\Rank A_{k_1}=k_1$.
In
$\begin{mpmatrix} \ast & a^t \\ a & A \end{mpmatrix}$
we have 
$$
[\bX^{k_1}]_{\{\{\mds{1},\ldots,\bX^{n}\},\{\bX,\ldots,\bX^n\}\}}=\sum_{i=0}^{k_1-1}\gamma_{i+1} [\bX^i]_{\{\{\mds{1},\ldots,\bX^{n}\},\{\bX,\ldots,\bX^n\}\}}.
$$
Since $\gamma_1\neq 0$ there is a unique value of $\ast$ such that 
$\bX^{k_1}=\sum_{i=0}^{k_1-1}\gamma_{i\ab{+1}} \bX^i$ and $\Rank\begin{mpmatrix} \ast & a_{\ab{k_{1}}}^t \\ a_{\ab{k_{1}}} & A_{\ab{k_{1}}} \end{mpmatrix}=\ab{k_1}$, this is given by $\ast:=a_{k_1}^t A_{k_1}^{-1}a_{k_1}$, making the matrix 
$$
\begin{pmatrix} a_{k_1}^t A_{k_1}^{-1}a_{k_1} & a^t \\ a & A \end{pmatrix},
$$
psd and RG.
Since the Schur complement of $M_{k_1,k_2}$
is positive $\left(1-a_{k_1}^t A_{k_1}^{-1}a_{k_1}-b_{k_2}^t B_{k_2}^{-1}b_{k_2}>0\right)$, we have that $1-a_{k_1}^t A_{k_1}^{-1}a_{k_1}>b_{k_1}^t B_{k_1}^{-1}b_{k_2}$ and hence
(again by Schur complements) the matrix
$$
\begin{pmatrix} 1-a_{k_1}^t A_{k_1}^{-1}a_{k_1} & b_{k_2}^t \\ b_{k_1} & B_{k_1} \end{pmatrix},
$$
is positive definite.
By \cite[Theorem 3.9]{CF91} both, 
$$
\begin{pmatrix} a_{k_1}^t A_{k_1}^{-1}a_{k_1} & a^t \\ a & A \end{pmatrix}, \quad \text{and } \
\begin{pmatrix} 1-a_{k_1}^t A_{k_1}^{-1}a_{k_1} & b^t \\ b & B \end{pmatrix},
$$
admit a $k_1$- and $(k_2+1)$-atomic measures, respectively. Hence $\mc M_n$ admits a $\Rank \mc M_n$-atomic measure.\\

\noindent\textbf{Case 2.2:} $\left(\Rank (\mc M_n)-\Rank\begin{mpmatrix} A & \mbf 0 \\ \mbf 0 & B\end{mpmatrix}\right)=0$.
The Schur complement $1-a_{k_1}^t A_{k_1}^{-1}a_{k_1}-b_{k_2}^t B_{k_2}^{-1}b_{k_2}$ of the block $A_{k_1}\oplus B_{k_2}$ in $M_{k_1,k_2}$
is equal to zero, thus 
$$
M_{k_1,k_2}=
\begin{pmatrix} a_{k_1}^t A_{k_1}^{-1}a_{k_1}  & a_{k_1}^t & 0\\ a_{k_1} & A_{k_1} & 0 \\ 0&0&0 \end{pmatrix}+
\begin{pmatrix} b_{k_2}^t B_{k_2}^{-1}b_{k_2} & 0 & b_{k_2}^t \\ 0 & 0 & 0 \\ b_{k_2} & 0 & B_{k_2} \end{pmatrix}.
$$
If $k_1<n$, then as in Case 2.1 we see that
$\begin{mpmatrix} a_{k_1}^t A_{k_1}^{-1}a_{k_1}  & a^t \\ a & A \end{mpmatrix}$
is psd, RG and of rank $k_1$ (similarly if $k_{2}<n$). 
Let us now assume that $k_1=n$.
Then the matrix 
$$
U:=\begin{pmatrix} a^t A^{-1}a  & a^t \\ a & A \end{pmatrix},
$$
is  psd and of rank $n$. Let $U_j$ be the $j$-th column of $U$. 
Suppose there is a nontrivial linear combination $\displaystyle \mbf{0}=U_1+\textstyle\sum\nolimits_{i=2}^{i_{0}} \delta_i U_i$ where 
$\delta_i\in \RR$, $i_0\leq n$ and $\delta_{i_0}\neq 0$. Observe also the matrix
$$
V:=\begin{pmatrix} b_{k_2}^t B_{k_2}^{-1}b_{k_2} & b_{k_2}^t \\ b_{k_2} & B_{k_2} \end{pmatrix},
$$
is psd, of rank $k_2$, and there is a nontrivial linear combination
$\textstyle\mbf 0=V_1+\sum\nolimits_{j=2}^{k_{2}+1} \zeta_j V_j$ where
$\zeta_j\in \RR$ and $V_j$ is the $j$-th column of $V$. 
Therefore 
\begin{equation}\label{col-rel}
\mbf 0=\begin{pmatrix}a ^t A^{-1}a+b_{k_2}^t B_{k_2}^{-1}b_{k_2} \\ a\\ b_{k_2}\end{pmatrix}+
		\sum_{\ab{2}\leq i\leq i_0} \delta_i \begin{pmatrix}U_i\\ 0\end{pmatrix}
		+\sum_{\ab{2}\leq j\leq k_2+1} \zeta_j \begin{pmatrix}v_{1j}\\ 0\\ V_j'\end{pmatrix},
\end{equation}
where $V_j=\begin{pmatrix}v_{1j}& V_j'\end{pmatrix}^{t},$ $v_{1j}\in \RR$, $V_j'\in \RR^{k_2}$. By \cite[Proposition 3.9]{CF96}, \eqref{col-rel} implies that $\mc M_n$ must satisfy the column relation
	$$
	\displaystyle\mbf 0=\mds 1+
		\sum_{1\leq i\leq i_0-1} \delta_i \bX^i
		+\sum_{1\leq j\leq k_2} \zeta_j \bY^j.
	$$
But then $\Card(\mathcal V)\leq i_0-1+k_2$, which implies
$$
\Card (\mathcal V)\leq n-1+k_2 < n+k_2=\Rank (\cM_n),
$$
a contradiction with the assumption $\Rank(\mc{M}_{n})\leq \Card(\mc{V})$.
Hence $U_{n+1}\in \Span\{U_1,\ldots,U_{n}\}$ and $U$ is RG. Similarly, $V$ is RG for every $k_2$. 
By \cite[Theorem 3.9]{CF91}, both 
$$
\begin{pmatrix} a_{k_1}^t A_{k_1}^{-1}a_{k_1}  & a^t \\ a & A \end{pmatrix}, \quad \text{and } \
\begin{pmatrix} b_{k_2}^t B_{k_2}^{-1}b_{k_2}  & b^t \\ b & B \end{pmatrix}
$$ 
admit a $k_1$- and $k_2$-atomic measure, respectively, and $\mc M_n$ admits a $(\Rank (\mc M_n))$-atomic measure.
\end{proof}

\begin{remark}
The matrix $\cM_n$ of $\Rank \mc M_n=2n+1$ satisfying the assumptions of Theorem \ref{XY=0-cm} admits a $(2n+1)$-atomic if and only if
one of $Z_1:=\begin{mpmatrix} a^tA^{-1}a & a^t \\a & A \end{mpmatrix}$ or $Z_2:=\begin{mpmatrix} b^tB^{-1}b & b^t \\b & B \end{mpmatrix}$ is RG, i.e., the last column is in the span of the others. Indeed, if $Z_1$ is RG then it admits a $n$-atomic measure and 
$\begin{mpmatrix} 1-b^tB^{-1}b & a^t \\a & A \end{mpmatrix}$ being positive definite admits a $(n+1)$-atomic measure which gives a 
$(2n+1)$-atomic measure for $\mc M_n$. Similarly for the pair $Z_2$ and $\begin{mpmatrix} 1-a^tA^{-1}a & b^t \\b & B \end{mpmatrix}$. If $Z_1$ and $Z_2$ are not RG, and $\mc{M}_{n}$ admits a $(2n+1)$-atomic measure, there must 
exist an $\alpha\in (0,1)$ such that $\alpha>a^tA^{-1}a$, $1-\alpha>b^t B^{-1}b$, and both matrices $\begin{mpmatrix} \alpha & a^t \\a & A \end{mpmatrix}$ and $\begin{mpmatrix} 1-\alpha & b^t \\b & B\end{mpmatrix}$
admit a $(n+1)$-atomic measures with the shared atom $(0,0)$. But then removing $(0,0)$ as an atom of both we are 
left with rank $n$ matrices $Z_1$ and $Z_2$, both admitting a measure. Hence they should be RG which would be a contradiction.
\end{remark}


\appendix
\section{Direct calculations for some results from the manuscript}

\subsection{Transformations for Lemma \ref{linear-trans}}\label{append-transforms}

Firstly, note that all the square roots are well-defined which follows from the fact that $\mc M_2$ is psd (for details see the proof of \cite[Proposition 4.1 (1)]{BZ18}).
We separate 5 cases according to $d\in \RR$.\\

\noindent{\textbf{Case 2.1:} $d<-2$.}

$$
\begin{array}{|c|c|c|} 
\hline
\text{Transformation }(x,y)\mapsto & \text{The first relation of }\mc M_2 & \text{The second relation of }\mc M_2\\
\hline
(x+y,y-x) & (2-d)\bX^2-(2+d)\bY^2=(4a-2d)\mds 1 & \bX\bY+\bY\bX=2\mds 1\\
\hline
\left(\sqrt{2-d}x,\sqrt{-2-d}y\right) & \bX^2+\bY^2=(4a-2d)\mds 1 & \bX\bY+\bY\bX=2\sqrt{d^2-4}\mds 1\\
\hline
\left(\frac{1}{\sqrt{4a-2d}}x,\frac{1}{\sqrt{4a-2d}}y\right) & \bX^2+\bY^2=\mds 1 & 
\bX\bY+\bY\bX= \frac{\sqrt{d^2-4}}{2a-d} \mds 1=:\widehat a \mds 1\\
\hline
\left( x,x+y\right) & 
\bX\bY+\bY\bX=\widehat a \mds 1+2\bX^2 & \bY^2=(1+\widehat a)\mds 1\\
\hline
\left( x,\frac{1}{\sqrt{1+\widehat a}}y\right) & 
\bX\bY+\bY\bX=\frac{\widehat{a}}{\sqrt{1+\widehat a}}\mds 1+\frac{2}{\sqrt{1+\widehat a}}\bX^2 & \bY^2=\mds 1\\
\hline
\left( -x+\frac{1}{\sqrt{1+\widehat a}}y,y\right) & 
\bX^2=\Big( \frac{1+\widehat a}{4}-\frac{2\widehat a}{ 1+\widehat a}\Big)=:\widetilde{a}\mds 1 & \bY^2=\mds 1\\
\hline
\left(\frac{x}{\sqrt{\widetilde a}},y\right) & 
\bX^2= \mds 1 & \bY^2=\mds 1\\
\hline
\left(\frac{x+y}{2},\frac{y-x}{2}\right) & 
\bX\bY+\bY\bX=\mbf 0 & \bX^2+\bY^2=\mds 1\\
\hline
\end{array}
$$\\

\noindent{\textbf{Case 2.2:} $d=-2$.}

$$
\begin{array}{|c|c|c|} 
\hline
\text{Transformation }(x,y)\mapsto & \text{The first relation of }\mc M_2 & \text{The second relation of }\mc M_2\\
\hline
(x+y,y-x) & \bX^2=(a+1)\mds 1 & \bX\bY+\bY\bX=2\mds 1\\
\hline
\left(\frac{1}{\sqrt{a+1}}x,y\right) & \bX^2=\mds 1 & \bX\bY+\bY\bX=\frac{2}{\sqrt{a+1}}\mds 1\\
\hline
\left(y,x\right) & \bY^2=\mds 1 &  \bX\bY+\bY\bX=\frac{2}{\sqrt{a+1}}\mds 1\\
\hline
\left( x-\frac{1}{\sqrt{a+1}}y,y\right) &  \bY^2=\mds 1 &  \bX\bY+\bY\bX=\mbf 0\\ 
\hline
\end{array}
$$\\

\noindent{\textbf{Case 2.3:} $-2<d<2$.}

$$
\begin{array}{|c|c|c|} 
\hline
\text{Transformation }(x,y)\mapsto & \text{The first relation of }\mc M_2 & \text{The second relation of }\mc M_2\\
\hline
(x+y,y-x) & (2-d)\bX^2-(2+d)\bY^2=(4a-2d)\mds 1 & \bX\bY+\bY\bX=2\mds 1\\
\hline
\left(\sqrt{2-d}x,\sqrt{2+d}y\right) & \bX^2-\bY^2=(4a-2d)\mds 1 & \bX\bY+\bY\bX=2\sqrt{4-d^2}\mds 1\\
\hline
\multicolumn{3}{|c|}{\text{We may assume that } 4a-2d\leq 0. \text{ Otherwise we do the transformation }(x,y)\mapsto (y,x)}\\
\hline
\left(x,x-\frac{\sqrt{4-d^2}}{\sqrt{d-2a}}y\right) & \bX^2-\Big(\underbrace{\frac{4(4-d^2)^2}{(2d-4a)^2}-1}_{C}
\Big)\bY^2=\mbf 0
& \bX\bY+\bY\bX=-(2d-4a)\bX^2-(2d-4a)\mds 1\\
\hline
\left( \sqrt{C}x,y\right) & \bY^2-\bX^2=\mbf 0&
\bX\bY+\bY\bX
= -\underbrace{\sqrt{C}(2d-4a)}_{D}\mds 1+\sqrt{C}(2d-4a)\bX^2\\
\hline
\left( x+y,y-x\right) & (2-D)\bX^2-(2+D)\bY^2=-4D\mds 1
&
\bX\bY+\bY\bX=\mbf 0 \\
\hline
\end{array}
$$\\

\noindent{\textbf{Case 2.3.1:} $D=2$.}

$$
\begin{array}{|c|c|c|} 
\hline
\text{Transformation }(x,y)\mapsto & \text{The first relation of }\mc M_2 & \text{The second relation of }\mc M_2\\
\hline
(x,y) & \bX^2=2\mds 1 & \bX\bY+\bY\bX=\mbf 0\\
\hline
\left(\frac{y}{\sqrt{2}},x\right) & \bY^2=\mds 1 & \bX\bY+\bY\bX=\mbf 0\\
\hline
\end{array}
$$\\

\noindent{\textbf{Case 2.3.2:} $D=-2$.}

$$
\begin{array}{|c|c|c|} 
\hline
\text{Transformation }(x,y)\mapsto & \text{The first relation of }\mc M_2 & \text{The second relation of }\mc M_2\\
\hline
(x,y)  & \bY^2=2\mds 1 & \bX\bY+\bY\bX=\mbf 0\\
\hline
\left(x,\frac{y}{\sqrt{2}}\right) & \bY^2=\mds 1 & \bX\bY+\bY\bX=\mbf 0\\
\hline
\end{array}
$$\\

\noindent{\textbf{Case 2.3.3:} $|D|\neq 2$.}

$$
\begin{array}{|c|c|c|} 
\hline
\text{Transformation }(x,y)\mapsto & \text{The first relation of }\mc M_2 & \text{The second relation of }\mc M_2\\
\hline
(\sqrt{|2-D|}x,\sqrt{|2+D|}y) & \pm \bX^2\pm \bY^2=-4D\sqrt{|4-d^2|}\mds 1 & \bX\bY+\bY\bX=\mbf 0\\
\hline
\end{array}
$$\\

\noindent{\textbf{Case 2.3.3.1:}  $\bX^2+\bY^2=\widetilde{A}\mds 1$, $\widetilde A>0$.}\\
$$
\begin{array}{|c|c|c|} 
\hline
\text{Transformation }(x,y)\mapsto & \text{The first relation of }\mc M_2 & \text{The second relation of }\mc M_2\\
\hline
\left(\frac{1}{\sqrt{\widetilde A}}x,\frac{1}{\sqrt{\widetilde A}}y\right) & \bX^2+\bY^2=\mds 1 & \bX\bY+\bY\bX=\mbf 0\\
\hline
\end{array}
$$\\

\noindent{\textbf{Case 2.3.3.2:}  $\bY^2-\bX^2=\widetilde{A}\mds 1$.}\\

We may assume that $\widetilde A\geq 0$ for if not, we may transform 
$(x,y)\mapsto (y,x)$.

$$
\begin{array}{|c|c|c|} 
\hline
\text{Transformation }(x,y)\mapsto & \text{The first relation of }\mc M_2 & \text{The second relation of }\mc M_2\\
\hline
\left(\frac{1}{\sqrt{\widetilde A}}x,\frac{1}{\sqrt{\widetilde A}}y\right) & \bY^2-\bX^2=\mds 1 & \bX\bY+\bY\bX=\mbf 0\\
\hline
\end{array}
$$\\

\noindent{\textbf{Case 2.4:} $2=d$.}

$$
\begin{array}{|c|c|c|} 
\hline
\text{Transformation }(x,y)\mapsto & \text{The first relation of }\mc M_2 & \text{The second relation of }\mc M_2\\
\hline
(x+y,y-x) & \bY^2=(1-a)\mds 1 & \bX\bY+\bY\bX=2\mds 1\\
\hline
\left(x,\frac{1}{\sqrt{1-a}}y\right) & \bY^2=\mds 1 & \bX\bY+\bY\bX=\frac{2}{\sqrt{1-a}}\mds 1\\
\hline 
\left( x-\frac{1}{\sqrt{1-a}}y,y\right) &  \bY^2=\mds 1 &  \bX\bY+\bY\bX=\mbf 0\\ 
\hline
\end{array}
$$\\

\noindent{\textbf{Case 2.5:} $2<d$.}

$$
\begin{array}{|c|c|c|} 
\hline
\text{Transformation }(x,y)\mapsto & \text{The first relation of }\mc M_2 & \text{The second relation of }\mc M_2\\
\hline
(x+y,y-x) & (2-d)\bX^2-(2+d)\bY^2=(4a-2d)\mds 1 & \bX\bY+\bY\bX=2\mds 1\\
\hline
\left(\sqrt{d-2}x,\sqrt{d+2}y\right) & \bX^2+\bY^2=(2d-4a)\mds 1 & \bX\bY+\bY\bX=2\sqrt{d^2-4}\mds 1\\
\hline
\left(\frac{1}{\sqrt{2d-4a}}x,\frac{1}{\sqrt{2d-4a}}y\right) & \bX^2+\bY^2=\mds 1 & 
\bX\bY+\bY\bX= \frac{\sqrt{d^2-4}}{2d-a} \mds 1=:\widehat a \mds 1\\
\hline
\left( x,x+y\right) & 
\bX\bY+\bY\bX=\widehat a \mds 1+2\bX^2 & \bY^2=(1+\widehat a)\mds 1\\
\hline
\left( x,\frac{1}{\sqrt{1+\widehat a}}y\right) & 
\bX\bY+\bY\bX=\frac{\widehat{a}}{\sqrt{1+\widehat a}}\mds 1+\frac{2}{\sqrt{1+\widehat a}}\bX^2 & \bY^2=\mds 1\\
\hline
\left( -x+\frac{1}{\sqrt{1+\widehat a}}y,y\right) & 
\bX^2=\Big( \frac{1+\widehat a}{4}-\frac{2\widehat a}{ 1+\widehat a}\Big)=:\widetilde{a}\mds 1 & \bY^2=\mds 1\\
\hline
\left(\frac{x}{\sqrt{\widetilde a}},y\right) & 
\bX^2= \mds 1 & \bY^2=\mds 1\\
\hline
\left(\frac{x+y}{2},\frac{y-x}{2}\right) & 
\bX\bY+\bY\bX=\mbf 0 & \bX^2+\bY^2=\mds 1\\
\hline
\end{array}
$$

\subsection{Calculations for Lemma \ref{linear-trans-on-zero-moments}\label{calc-for-lemma-zero-moments}}

The statement of the lemma follows by the following calculations:
	\begin{align*}
		\widetilde{\beta}_{X} &= L_{\beta^{(4)}}(bX+cY)=b\beta_X+c\beta_Y=0,\\
		\widetilde{\beta}_{Y} &=  L_{\beta^{(4)}}(eX+fY)=e\beta_X+f\beta_Y=0,\\
		\widetilde{\beta}_{X^3} &= L_{\beta^{(4)}}((bX+cY)^3)=
			 L_{\beta^{(4)}}\left(b^3X^3+b^2c(X^2Y+XYX+YX^2)+bc^2(XY^2+YXY+Y^2X)+c^3Y^3\right)\\
			&=b^3\beta_{X^3}+3b^2c \beta_{X^2Y}+3bc^2\beta_{XY^2}+c^3\beta_{Y^3}=0,\\
		\widetilde{\beta}_{X^2Y} &= L_{\beta^{(4)}}((bX+cY)^2(eX+fY))\\
			&=	 L_{\beta^{(4)}}\left(b^2eX^3+b^2fX^2Y+bce (XYX+ YX^2)+
					bcf(XY^2+YXY)+c^2e Y^2X +c^2fY^3\right)\\
			&=b^2e\beta_{X^3}+(b^2f+2bce) \beta_{X^2Y}+(2bcf+c^2e)\beta_{XY^2}+c^2f\beta_{Y^3}=0,\\
		\widetilde{\beta}_{Y^3} &= L_{\beta^{(4)}}((eX+fY)^3)=
			 L_{\beta^{(4)}}\left(e^3X^3+e^2f(X^2Y+XYX+YX^2)+ef^2(XY^2+YXY+Y^2X)+f^3Y^3\right)\\
			&=e^3\beta_{X^3}+3e^2f \beta_{X^2Y}+3ef^2\beta_{XY^2}+f^3\beta_{Y^3}=0.
	\end{align*}

\subsection{Calculations for Lemma \ref{gen-lemma-one--1}\label{calc-part3-app}}

 	Part \eqref{Y2=1-pt3} of Lemma \ref{gen-lemma-one--1} follows by
 	the following calculations:
	\begin{equation*}
  	\begin{split}
		\widehat{X}^2 &= \widehat{X}^2\widetilde{Y}^2 = \left(\begin{array}{cc} \hat D^2+xx^t & Dx \\ x^t\hat D  & x^tx\end{array}\right),\\	
		\widehat{X}\widetilde{Y}   &= \widehat{X}\widetilde{Y}^3=\left(\begin{array}{cc} \hat D & -x \\ x^t & 0\end{array}\right),\\
		\widehat{X}^3 &= \left(\begin{array}{cc} \hat D^3+xx^t\hat D+\hat Dxx^t & \ast \\ \ast & x^t\hat Dx\end{array}\right),\\
		\widehat{X}^2\widetilde{Y} &= \left(\begin{array}{cc} \hat D^2+xx^t & -\hat Dx \\ x^t\hat D  & -x^tx\end{array}\right),
	\end{split}
	\qquad
	\begin{split}
		\widehat{X}^4 &= \left(\begin{array}{cc} (\hat D^2+xx^t)^2+\hat Dxx^t\hat D & \ast\\ \ast  & x^t\hat D^2x+(x^tx)^2\end{array}\right),\\
		\widehat{X}^3\widetilde{Y} &= \left(\begin{array}{cc} \hat D^3+xx^t\hat D+\hat Dxx^t & \ast \\ \ast & -x^t\hat Dx\end{array}\right),\\
		\widehat{X}\widetilde{Y}\widehat{X}\widetilde{Y} &= \left(\begin{array}{cc} \hat D^2-xx^t & -\hat Dx \\ x^t \hat D  & -x^tx\end{array}\right),\\
		\widetilde{Y}^4  &= I_n,
	\end{split}
	\end{equation*}
	where $\hat D=D_0\oplus 0.$


\section{Theorem 4.2 - Remaining cases}\label{proof-rank5-extension}

\subsection{Relations $\bX\bY+\bY\bX=\mbf{0}$ and $\bY^2=\mbf 1$} \label{subsub2}

By Lemma \ref{zero-moment-general2}, the matrix $\cM_n(\beta_1,\beta_X,X)$ must have $\beta_X=0$ and hence we will write it as
$\cM_n(\beta_1,X)$. The forms of $\cM_n(\beta_1,X)$, $\cM_n(\beta_1,Y)$ are 
%
%
%
%
\begin{equation}\label{MX-form}
\begin{blockarray}{cccccccccccc}
& \mds{1}&\bX&\bX^2&\bX^3&\cdots&\bX^{2k}&\bX^{2k+1}&\cdots&\bX^n\\
\begin{block}{c(ccccccccccc)}
\mds{1}& \beta_1 & 0 & \beta_{X^2} & 0 & \cdots & \beta_{X^{2k}} & 0 & 
\cdots & c_n \beta_{X^n}\\
\bX& 0 & \beta_{X^2} & 0 & \beta_{X^4} & \cdots & 0 & \beta_{X^{2k+2}}& \cdots & 
c_{n+1}\beta_{X^{n+1}} \\
\bX^2& \beta_{X^2} & 0 & \beta_{X^4} & 0 & \cdots  & \beta_{X^{2k+2}} & 0 & \cdots&
c_n \beta_{X^{n+2}} \\
\bX^3& 0 & \beta_{X^4} & 0 & \beta_{X^6} & \cdots  & 0 & \beta_{X^{2k+4}} & \cdots & 
c_{n+1}\beta_{X^{n+3}} \\
\vdots& \vdots &&&&&&&&\vdots \\
\bX^n& c_n\beta_{X^n} & 
c_{n+1}\beta_{X^{n+1}} &
c_{n}\beta_{X^{n+2}} &  
c_{n+1}\beta_{X^{n+3}} & 
\cdots &
c_{n}\beta_{X^{n+2k}} &
c_{n+1}\beta_{X^{n+2k+1}} & 
\cdots&
\beta_{X^{2n}}\\
\end{block}
\end{blockarray},
\end{equation}
\begin{equation*}
\begin{blockarray}{cccccccccc}
& \bY& \bX\bY&\cdots&
\bX^{2k}\bY&\bX^{2k+1}\bY&\cdots&\bX^{n-1}\bY\\
\begin{block}{c(ccccccccc)}
\bY& \beta_1 & 0 & \cdots & \beta_{X^{2k}} & 0 & 
\cdots & c_{n-1} \beta_{X^n}\\
\bX\bY& 0 & \beta_{X^2}  & \cdots & 0 & \beta_{X^{2k+2}}& \cdots & 
c_{n}\beta_{X^{n+1}} \\
\vdots& \vdots &&&&&&\vdots \\
\bX^{2k}\bY& \beta_{X^{2k}} & 0  & \cdots  & \beta_{X^{4k}} & 0 & \cdots&
c_{n-1} \beta_{X^{n+2}} \\
\bX^{2k+1}\bY & 0 & \beta_{X^{2k+2}} & \cdots  & 0 & \beta_{X^{4k+2}} & \cdots & 
c_{n}\beta_{X^{n+3}} \\
\vdots& \vdots &&&&&&\vdots \\
\bX^{n-1}\bY& c_{n-1}\beta_{X^{n-1}} & 
c_{n}\beta_{X^{n}} &
\cdots &
c_{n-1}\beta_{X^{n+2k-1}} &
c_{n}\beta_{X^{n+2k}} & 
\cdots&
\beta_{X^{2n-2}}\\
\end{block}
\end{blockarray}, 
\end{equation*}
respectively, where $c_m=\frac{(-1)^{m}+1}{2}$, and $B(\beta_Y)$ has the form \eqref{B(beta-y)}.
By Lemma \ref{zero-moment-general} the nc atoms must be of the form
\eqref{form-of-nc-atoms-general}.
Hence the only way to cancel the $\beta_Y$ moment in 
$B(\beta_Y)$ is by using atoms of size 1, which are $(0,\pm 1)$.
Since we have that
$$|\beta_Y|\widetilde{\cM}_{n}^{(0,\sign(\beta_Y)1)} \preceq \gamma\widetilde{\cM}_{n}^{(0,1)}+\delta\widetilde{\cM}_{n}^{(0,-1)}$$
for every $\gamma,\delta\geq 0$ such that $\gamma-\delta=\beta_Y$ (Indeed, 
$\beta_Y\geq 0$ implies that $\sign(\beta_Y)1=1$, $\gamma\geq \beta_Y$ and hence 
$|\beta_Y|\widetilde{\cM}_{n}^{(0,\sign(\beta_Y)1)} \preceq \gamma\widetilde{\cM}_{n}^{(0,1)}$,
while 
$\beta_Y< 0$ implies that $\sign(\beta_Y)1=-1$, $\delta\geq |\beta_Y|$ and hence 
$|\beta_Y|\widetilde{\cM}_{n}^{(0,\sign(\beta_Y)1)} \preceq \delta\widetilde{\cM}_{n}^{(0,-1)}$.),
it follows that 
$\widetilde{\cM_n}(\beta_1,\beta_Y)$ admits a measure if and only if 
$$\widetilde{\cM_n}(\beta_1-\beta_Y,0)=\widetilde{\cM_n}(\beta_1,\beta_Y)-|\beta_Y|\widetilde{\cM}_{n}^{(0,\sign(\beta_Y)1)}$$
admits a measure.
Note that the existence of a measure $\cM_n(\beta_1-\beta_Y,X)$ is the truncated Hamburger moment problem.
By \cite[Theorem 3.9]{CF91}, the matrix $\cM_n(\beta_1-\beta_Y,X)$ admits a measure with size 1 atoms from $\RR$ if and only if 
$\cM_n(\beta_1-\beta_Y,X)$ is psd and recursively generated.
Now note that if $x_i$, $i=1,\ldots, k$, $k\in \NN$, are atoms in the measure for $\cM_n(\beta_1-\beta_Y,X)$
with the corresponding densities $\mu_i$,  $i=1,\ldots, k$, then
$$
\Big(\left(\begin{matrix}
0&x_i\\
x_i&0
\end{matrix}\right),
\left(\begin{matrix}
1&0\\
0&-1
\end{matrix}\right)
\Big),     \quad  i=1,\ldots,k,
$$
with densities $\mu_i$, $i=1,\ldots,k$, are atoms which represent $\widetilde{\cM_n}(\beta_1-\beta_Y,0,0)$.

\subsection{Relations $\bX\bY+\bY\bX=\mbf{0}$ and $\bY^2=\mds 1+ \bX^2$}

By Lemma \ref{zero-moment-general2}, the matrix
$\cM_n(\beta_1,\beta_X,X)$ has the form \eqref{MX-form}, $\cM_n(\beta_1,Y)$ is equal to
%
%
\begin{equation*}
\begin{blockarray}{cccccccccc}
& \bY& \bX\bY&\cdots&
\bX^{2k}\bY&\bX^{2k+1}\bY&\cdots&\bX^{n-1}\bY\\
\begin{block}{c(ccccccccc)}
\bY& \beta_1+\beta_{X^2} & 0  & \cdots & \beta_{X^{2k}}+\beta_{X^{2k+2}} & 0 & 
\cdots & \cdots\\
\bX\bY& 0 & \beta_{X^2}+\beta_{X^4}  & \cdots & 0 & \beta_{X^{2k+2}}+\beta_{X^{2k+4}}& \cdots & 
\vdots\\
\vdots& \vdots &&&&&&\vdots \\
\bX^{2k}\bY& \beta_{X^{2k}} + \beta_{X^{2k+2}}&0 &\cdots  & \beta_{X^{4k}}+\beta_{X^{4k+2}} & 0 & \cdots&
\vdots\\
\bX^{2k+1}\bY & 0 & \beta_{X^{2k+2}}+\beta_{X^{2k+4}} &\cdots & 0 & \beta_{X^{4k+2}}+\beta_{X^{4k+4}} & \cdots & 
\vdots\\
\vdots& \vdots &&&&&&\vdots \\
\bX^{n-1}\bY& \cdots&\cdots&\cdots&\cdots&\cdots&\cdots&\cdots \\
\end{block}
\end{blockarray}, 
\end{equation*}
and $B(\beta_Y)$ has the form \eqref{B(beta-y)}.
By Lemma \ref{zero-moment-general} the nc atoms must be of the form
\eqref{form-of-nc-atoms-general}.
Hence the only way to cancel the $\beta_Y$ moment in 
$B(\beta_Y)$ is by using atoms of size 1, which are $(0,\pm 1)$.
As in \S \ref{subsub2}
we argue that 
$\widetilde{\cM_n}(\beta_1,\beta_Y)$ admits a measure if and only if 
$\cM_n(\beta_1-\beta_Y,X)$ admits a measure 
with atoms from $\RR$ of size 1 
if and only if it
is psd and recursively generated.
Now note that if $x_i$, $i=1,\ldots, k$, $k\in \NN$, are atoms in the measure for 
$\cM_n(\beta_1-\beta_Y,X)$ with the corresponding densities $\mu_i$,  $i=1,\ldots, k$, then
$$
\Big(\left(\begin{matrix}
0&x_i\\
x_i&0
\end{matrix}\right),
\left(\begin{matrix}
\sqrt{1+x_i^2}&0\\
0&-\sqrt{1+x_i^2}
\end{matrix}\right)
\Big),     \quad  i=1,\ldots,k,
$$
with densities $\mu_i$,  $i=1,\ldots, k$, are atoms which represent $\widetilde{\cM_n}(\beta_1-\beta_Y,0,0)$.

\subsection{Relations $\bX\bY+\bY\bX=\mbf{0}$ and $\bY^2= \bX^2$}

By Lemma \ref{zero-moment-general2}, the matrix
$\cM_n(\beta_1,\beta_X,X)$ has the form \eqref{MX-form}, $\cM_n(\beta_1,Y)$ is equal to
%
\begin{equation*}
\begin{blockarray}{cccccccccccc}
& \bY& \bX\bY&\bX^2\bY&\bX^3\bY&\cdots&
\bX^{2k}\bY&\bX^{2k+1}\bY&\cdots&\bX^{n-1}\bY\\
\begin{block}{c(ccccccccccc)}
\bY& \beta_{X^2} & 0 & \beta_{X^4} & 0 & \cdots & \beta_{X^{2k+2}} & 0 & 
\cdots & \cdots\\
\bX\bY& 0 & \beta_{X^4} & 0 & \beta_{X^6}  & \cdots & 0 &  \beta_{X^{2k+4}}& \cdots & 
\vdots\\
\bX^2\bY& \beta_{X^4}& 0 & \beta_{X^6} & 0 & \cdots  &  \beta_{X^{2k+4}} & 0 & \cdots&
\vdots\\
\bX^3\bY & 0 & \beta_{X^6} & 0 &  \beta_{X^8} & \cdots  & 0 &  \beta_{X^{2k+6}} & \cdots & 
\vdots\\
\vdots& \vdots &&&&&&&&\vdots \\
\bX^{n-1}\bY& \cdots&\cdots&\cdots&\cdots&\cdots&\cdots&\cdots&\cdots&\cdots \\
\end{block}
\end{blockarray}, 
\end{equation*}
and $B(\beta_Y)=\bf 0$.
Note that the existence of a measure $\cM_n(\beta_1,0,X)$ is the truncated Hamburger moment problem.
By \cite[Theorem 3.9]{CF91}, the matrix $\cM_n(\beta_1,0,X)$ admits a measure with atoms from $\RR$ of size 1 if and only if it
is psd and recursively generated.
Now note that if $x_i$, $i=1,\ldots, k$, $k\in \NN$, are atoms in the measure for 
$\cM_n(\beta_1,X)$ with the corresponding densities $\mu_i$,  $i=1,\ldots, k$, then
$$
\Big(\left(\begin{matrix}
0&x_i\\
x_i&0
\end{matrix}\right),
\left(\begin{matrix}
x_i&0\\
0&-x_i
\end{matrix}\right)
\Big),     \quad  i=1,\ldots,k,
$$
with densities $\mu_i$,  $i=1,\ldots, k$, are atoms which represent $\widetilde{\cM_n}(\beta_1,0,0)$.

\end{document}